\theoremstyle{definition}
\numberwithin{equation}{section}
\newcommand{\ci}{\perp\!\!\!\perp}
\newcommand{\ncom}{\newcommand}
\ncom{\beq}{\begin{equation}}
\ncom{\eeq}{\end{equation}}
\ncom{\bea}{\begin{eqnarray*}}
\ncom{\eea}{\end{eqnarray*}}
\ncom{\beqa}{\begin{eqnarray}}
\ncom{\eeqa}{\end{eqnarray}}
\ncom{\nno}{\nonumber}
\ncom{\non}{\nonumber}
\ncom{\ds}{\displaystyle}
\ncom{\half}{\frac{1}{2}}
\ncom{\mbx}{\makebox{.25cm}}
\ncom{\hs}{\mbox{\hspace{.25cm}}}
\ncom{\rar}{\rightarrow}
\ncom{\Rar}{\Rightarrow}
\ncom{\noin}{\noindent}
\ncom{\bc}{\begin{center}}
\ncom{\ec}{\end{center}}
\ncom{\sz}{\scriptsize}
\ncom{\rf}{\ref}
\ncom{\s}{\sqrt{2}}
\ncom{\sgm}{\sigma}
\ncom{\Sgm}{\Sigma}
\ncom{\psgm}{\sigma^{\prime}}
\ncom{\dt}{\delta}
\ncom{\Dt}{\Delta}
\ncom{\lmd}{\lambda}
\ncom{\Lmd}{\Lambda}
\ncom{\Th}{\Theta}
\ncom{\e}{\eta}
\ncom{\eps}{\epsilon}
\ncom{\pcc}{\stackrel{P}{>}}
\ncom{\lp}{\stackrel{L_{p}}{>}}
\ncom{\dist}{{\rm\,dist}}
\ncom{\sspan}{{\rm\,span}}
\ncom{\re}{{\rm Re\,}}
\ncom{\im}{{\rm Im\,}}
\ncom{\sgn}{{\rm sgn\,}}
\ncom{\ba}{\begin{array}}
\ncom{\ea}{\end{array}}
\ncom{\hone}{\mbox{\hspace{1em}}}
\ncom{\htwo}{\mbox{\hspace{2em}}}
\ncom{\hthree}{\mbox{\hspace{3em}}}
\ncom{\hfour}{\mbox{\hspace{4em}}}
\ncom{\vone}{\vskip 2ex}
\ncom{\vtwo}{\vskip 4ex}
\ncom{\vonee}{\vskip 1.5ex}
\ncom{\vthree}{\vskip 6ex}
\ncom{\vfour}{\vspace*{8ex}}
\ncom{\norm}{\|\;\;\|}
\ncom{\integ}[4]{\int_{#1}^{#2}\,{#3}\,d{#4}}
\ncom{\vspan}[1]{{{\rm\,span}\{ #1 \}}}
\ncom{\dm}[1]{ {\displaystyle{#1} } }
\ncom{\ri}[1]{{#1} \index{#1}}
\newtheorem{theorem}{\bf Theorem}[section]
\newtheorem{remark}{\bf Remark}[section]
\newtheorem{lemma}{Lemma}[section]
\newtheorem{corollary}{Corollary}[section]
\newtheorem{example}{Example}[section]
\newtheorem{definition}{Definition}[section]
\newtheoremstyle
    {remarkstyle}
    {}
    {11pt}
    {}
    {}
    {\bfseries}
    {:}
    {     }
    {\thmname{#1} \thmnumber{#2} }
\theoremstyle{remarkstyle}
\def\eps{\varepsilon}
\def\Z{{\mathbb Z}}
\begin{document}
\title{\Large M\lowercase{arginal} L\lowercase{og-linear} P\lowercase{arameters} \lowercase{and their} C\lowercase{ollapsibility for} C\lowercase{ategorical} D\lowercase{ata}}
\author[Sayan Ghosh]{S. Ghosh}
\address{Sayan Ghosh, Department of Statistics,
 University of Haifa, Haifa 3498838, Israel.}
 \email{sayan38@gmail.com}
\author{P. Vellaisamy}
\address{P. Vellaisamy, Department of Mathematics,
Indian Institute of Technology Bombay, Powai, Mumbai 400076, India.}
\email{pv@math.iitb.ac.in}
\subjclass[2010]{Primary 62H17; Secondary 62E99}
\keywords{Marginal log-linear parameters; Contingency table; Collapsibility; Conditional independence; Smooth parameterization.}
\begin{abstract}
\noindent We consider marginal log-linear models for parameterizing distributions on multidimensional contingency tables. These models generalize ordinary log-linear and multivariate logistic models, besides several others. First, we obtain some characteristic properties of marginal log-linear parameters. Then we define collapsibility and strict collapsibility of these parameters in a general sense. Several necessary and sufficient conditions for collapsibility and strict collapsibility are derived based on simple functions of only the cell probabilities, which are easily verifiable. These include results for an arbitrary set of marginal log-linear parameters having some common effects. The connections of strict collapsibility to various forms of independence of the variables are explored. We analyze some real-life datasets to illustrate the above results on collapsibility and strict collapsibility. Finally, we obtain a result relating parameters with the same effect but different margins for an arbitrary table, and demonstrate smoothness of marginal log-linear models under collapsibility conditions.  
\end{abstract}

\maketitle
\vspace*{-0.7cm}

\section{Introduction}
Various models for multidimensional contingency tables have been proposed by imposing restrictions on marginal or conditional distributions, especially in the context of longitudinal and causal models. Some references include Liang, Zeger and Qaqish (1992), Becker (1994) and Lang and Agresti (1994). In this paper, we consider the class of marginal log-linear (MLL) models introduced by Bergsma and Rudas (2002), which generalize ordinary log-linear models, multivariate logistic models (McCullagh and Nelder (1989), Glonek and McCullagh (1995)), and the mixture of these models (Glonek (1996)). The MLL parameters are computed from marginals of the joint distribution and are characterized by two subsets of the variables -- the relevant marginal and the effect (a subset of the marginal).

MLL parameterizations provide an elegant and flexible way to parameterize a multivariate discrete probability distribution. Useful submodels can be induced by setting some of the parameters to 0, or more generally by restricting attention to a linear or affine subset of the parameter space. If these zero parameters can be embedded into a larger smooth parameterization of the joint distribution, then the model defined by the conditional independence constraints is a curved exponential family, and therefore possesses good statistical properties. A smooth parameterization implies the applicability of standard asymptotic theory and simplifies interpretation. This approach was applied by Rudas {\it et al.} (2010) and Forcina {\it et al.} (2010) for conditional independence models, and Evans and Richardson (2013) to some classes of graphical models. More recently, Evans (2015) demonstrated smoothness of certain MLL parameterizations.

In this paper, we use marginal models (see Bergsma, Croon and Hagenaars (2009)) for the analysis of a multidimensional contingency table, which may be quite involved for tables of high dimension. So, it is often useful and convenient to reduce the dimension of the table and examine the condensed (summed over levels of certain variables) table, for example, if the original table is sparse or the observed cell counts are small (Ducharne and Lepage (1986)). However, in a condensed table, some extraneous association between the remaining variables may be introduced. Also, any original relationship between certain variables may be lost and/or the monotonicity of dependence among some variables may be reversed. Some references for this paradox, commonly known as the Simpson's paradox, are Simpson (1951), Cox and Wermuth (2003) and Vellaisamy (2012). Hence, it is of practical importance to identify various conditions for collapsibility of a given table, that is, if the table can be condensed without affecting certain interaction parameters for the remaining variables. With the huge volume of data available nowadays, collapsibility may be viewed as a `dimension reduction' technique for data condensation. The study of collapsibility is also important because all multivariate statistical analysis happens on a marginal of a larger table, where one mostly does not know the variables upon which collapsing occurred. 

Wermuth (1987) studied parametric collapsibility with respect to odds ratio and relative risk, and Guo and Geng (1995) discussed collapsibility conditions for logistic regression coefficients. Whittemore (1978) obtained some necessary and sufficient conditions for collapsibility and strict collapsibility for a $n$-dimensional table. However, due to arbitrary functional representations and the algebraic approach, the results and their proofs are quite involved and non-intuitive. Vellaisamy and Vijay (2007) studied collapsibility for a multidimensional contingency table using ordinary log-linear parameters, which have simple closed-form expressions. Some related references are  Vellaisamy and Vijay (2009), and  Vellaisamy and Vijay (2010). Note that for a given multidimensional contingency table, ordinary log-linear parameters can be defined only for the full table, while MLL parameters can be defined within any marginal of the table (see Bergsma and Rudas (2002)). In this paper, we obtain various results related to collapsibility and strict collapsibility of MLL parameters for such tables, which generalize the ordinary log-linear parameters in the sense described above. 

The remaining paper is organized as follows. In Section 2, we describe various terms and notations that are used throughout the paper. We discuss the concept of a marginal log-linear parameterization introduced by Bergsma and Rudas (2002), and define the MLL parameters. We use a simple and intuitive expression for such parameters involving only subsets of an effect defined within a marginal. Some important properties of these parameters are derived. In Section 3, we give a general definition of collapsibility by considering two arbitrary marginals of a contingency table and provide a set of equivalent conditions for collapsibility of MLL parameters. We obtain necessary and sufficient conditions for collapsibility with respect to a set of MLL parameters having some common effects. All the above conditions generalize those obtained by Vellaisamy and Vijay (2007) for ordinary log-linear parameters. Similar results for strict collapsibility of MLL parameters are derived. Interestingly, all results in this section are expressed in terms of simple functions of the cell probabilities whose MLE's can be easily computed either in closed forms under certain models or using iterative procedures. In Section 4, we explore the relationship of strict collapsibility with conditional, joint and mutual independence of variables for a multidimensional table. New necessary and sufficient conditions are obtained in each case. From a theoretical perspective, these conditions characterize various forms of independence among variables in a contingency table in terms of strict collapsibility of MLL parameters. From a practical perspective, we can infer independence relations among the variables by verifying the simple collapsibility conditions or equivalently collapse larger tables into smaller ones if the variables satisfy the independence relations. We provide various real-life data analysis examples in Sections 3 and 4 to illustrate the results therein. In Section 5, we obtain a new result relating parameters having a common effect but defined within different marginals of an arbitrary table. This result is then used to show the existence of a smooth MLL parameterization or curved exponential family under collapsibility conditions and obtain the same. Also, a sufficient condition for collapsibility of MLL parameters in a multidimensional table is provided using the above result. Section 6 mentions some concluding remarks. All proofs are included in the Appendix.

\section{Marginal log-linear parameters}
In this section, we consider marginal log-linear (MLL) parameters for a multidimensional contingency table and discuss related notations and concepts. Also, we establish some useful and interesting properties of these parameters.

\subsection{Notations and Definitions} First, we introduce some terminology that will be used subsequently. Let $V$ be a finite index set. For $v\in V$, let $X_{v}$ be a categorical variable with levels $x_{v}$ in $\mathfrak{X}_{v}=\{0,1,\ldots,|\mathfrak{X}_{v}|-1\}$ (say). Throughout the paper we assume $V$ to be fixed, that is, the full set of variables is known. We denote $\mathfrak{X}_{A}=\times_{v\in A}(\mathfrak{X}_{v})$, $X_{A}=\{ X_{v}\mid v\in A\}$ and $x_{A}=\{x_{v}\mid v\in A\}$ where $\emptyset\neq A\subseteq V$. Also, let $\tilde{\mathfrak{X}}_{v}=\{0,1,\ldots,|\mathfrak{X}_{v}|-2\}$ and $\tilde{\mathfrak{X}}_{A} = \times_{v\in A}(\tilde{\mathfrak{X}}_{v})$. The marginal distribution of $X_{A}$ is denoted by $p_{A}(x_{A})$ while the conditional distribution of $X_{A}|X_{B}$ is $p_{A|B}(x_{A}|x_{B})$ for disjoint $A,B\subseteq V$. It is assumed that $p_{V}(x_{V})>0$. 

The Cartesian product $\mathfrak{X}_{V}$ is called a $|V|$-dimensional contingency table. Let $x = (x_{1},\ldots,\\x_{|V|})$ be a cell of the table with cell frequency $n(x)\geq 0$ and cell probability $p(x)>0$. A marginal cell probability for the marginal table $\mathfrak{X}_{A}$ is given by $p_{A}(x_{A})=\sum_{j\in\mathfrak{X}_{V}:j_{A}=x_{A}}p(j)$. Let $\mathcal{F}=\{p(x):p(x)>0,\sum_{x\in\mathfrak{X}_{V}}p(x)=1\}$ be the strictly positive probability simplex of dimension $k=\prod_{v\in V}|\mathfrak{X}_{v}|-1$ on $\mathfrak{X}_{V}$. A function $\theta:\mathcal{F}\rightarrow\mathbb{R}^{k},k\geq 1$ is called a {\it parameter} of $\mathcal{F}$. For an open set $D\subseteq\mathbb{R}^{k}$, $\theta:\mathcal{F}\rightarrow D$ is a {\it smooth parameterization} of $\mathcal{F}$ if it is a homeomorphism onto $D$, is twice continuously differentiable and its Jacobian has full rank $k$ everywhere. If $\mathcal{F}$ belongs to an exponential family, then a model $\mathcal{G}\subseteq\mathcal{F}$ is called {\it curved exponential} if it has a smooth parameterization. 

Now we discuss some concepts related to a MLL parameterization that are used later.
\begin{definition}\label{def1.1}
Let $\mathcal{P} = \{(P,Q)\}$ be a collection of ordered pairs of subsets of $V$ such that $P\subseteq Q\subseteq V$. Define
\begin{equation}\label{eq1.1}
\mathcal{Q}_{\mathcal{P}}=\{Q|(P,Q)\in\mathcal{P}~\textrm{for some}~P\subseteq Q\}
\end{equation}  
to be the collection of margins in $\mathcal{P}$. If $\mathcal{Q}_{\mathcal{P}}=\{Q_{1},\ldots,Q_{t}\}$, then define $\mathbb{P}_{i}=\{P|(P,Q_{i})\in\mathcal{P}\}$ to be the collection of effects defined within the margin $Q_{i}$. We call $\mathcal{P}$ {\it hierarchical} if there is an ordering on $\mathcal{Q}_{\mathcal{P}}$ such that $i<j\Rightarrow Q_{j}\not\subseteq Q_{i}$ (the sequence of margins is non-decreasing) and $P\in\mathbb{P}_{j}\Rightarrow P\not\in\mathbb{P}_{i}$ (every effect is contained only within the first margin of which it is a subset). We call $\mathcal{P}$ {\it complete} if for every non-empty $P$ there is exactly one $Q_{i}\in\mathcal{Q}_{\mathcal{P}}$ such that $(P,Q_{i})\in\mathcal{P}$ (all effects are considered, each defined only once in a specific margin), which implies the last $Q_{i}$ in the ordering is $V$. 
\end{definition}
An ordered pair $(P,Q_{i})$ represents a MLL interaction over the effect $P$ within the margin $Q_{i}$. Next, we formally define a MLL parameter. For equivalent definitions, see Bergsma and Rudas (2002) or Evans and Richardson (2013).
\begin{definition}\label{def1.3}
For $L\subseteq M\subseteq V$ and $x_{L}\in\mathfrak{X}_{L}$, let
\begin{equation}\label{eq1.3}
\nu_{L}^{M}(x_{L}) = \frac{1}{|\mathfrak{X}_{M\backslash L}|}\sum_{j_{M}\in\mathfrak{X}_{M}:j_{L}=x_{L}}\log p_{M}(j_{M})
\end{equation}
and
\begin{align}\label{eq1.4}
\lambda_{L}^{M}(x_{L}) &= \sum_{L'\subseteq L}(-1)^{|L\backslash L'|}\nu_{L'}^{M}(x_{L'}) \nonumber \\
&= \sum_{L'\subseteq L}(-1)^{|L\backslash L'|}\frac{1}{|\mathfrak{X}_{M\backslash L'}|}\sum_{j_{M}\in\mathfrak{X}_{M}:j_{L'}=x_{L'}}\log p_{M}(j_{M}).
\end{align} 
Then $\lambda_{L}^{M}(x_{L})$ is called a MLL parameter.
\end{definition}
The expression in (\ref{eq1.4}) involves only subsets of an effect defined within a marginal and marginal cell probabilities. Using M\"{o}bius inversion (see Charalambides (2002)) for $L\subseteq M\subseteq V$ and $x_{L}\in\mathfrak{X}_{L}$, we have  from (\ref{eq1.4})
\begin{equation}\label{eq1.13}
	\lambda_{L}^{M}(x_{L}) = \sum_{L'\subseteq L}(-1)^{|L\backslash L'|}\nu_{L'}^{M}(x_{L'})\Leftrightarrow\nu_{L}^{M}(x_{L})=\sum_{L'\subseteq L}\lambda_{L'}^{M}(x_{L'}).
\end{equation} 

Let $\lambda_{\mathcal{P}}=\{\lambda_{L}^{M}(x_{L})|(L,M)\in\mathcal{P},x_{L}\in\mathfrak{X}_{L}\}$ be the MLL parameterization corresponding to $\mathcal{P}$ (see Definition \ref{def1.1}) and $\lambda_{L}^{M}$ denote the collection $\{\lambda_{L}^{M}(x_{L})|x_{L}\in\mathfrak{X}_{L}\}$. Important special cases of $\lambda_{\mathcal{P}}$ are the classical or ordinary log-linear parameters denoted by $\{\lambda_{L}^{V}|L\subseteq V\}$ and the multivariate logistic parameters denoted by $\{\lambda_{L}^{L}|L\subseteq V\}$. To avoid redundancy due to identifiability constraints (see Lemma \ref{lem1.1}), consider only $x_{L}\in\tilde{\mathfrak{X}}_{L}$ in  $\lambda_{\mathcal{P}}$ so that the MLL parameters $\tilde{\lambda}_{L}^{M}(x_{L}):\mathcal{F}\rightarrow\mathbb{R}^{|\tilde{\mathfrak{X}}_{V}|}$ are elements of the collection $\tilde{\lambda}_{\mathcal{P}}$. Theorem 2 of Bergsma and Rudas (2002) showed that if $\mathcal{P}$ is hierarchical and complete, then $\tilde{\lambda}_{\mathcal{P}}$ is a smooth parameterization of $\mathcal{F}$ (the saturated model). 

\subsection{Properties of marginal log-linear parameters}

Here, we obtain various useful properties of MLL parameters, some of which are used subsequently in the paper. 

The result below is mentioned without proof in Evans and Richardson (2013). It provides an identifiability constraint for a MLL parameter and states that the sum over all levels of any argument of a MLL parameter is $0$.
\begin{lemma}\label{lem1.1}
For every $\emptyset\neq L\subseteq M$, the MLL parameters $\lambda_{L}^{M}$ satisfy the constraint  
\begin{equation}\label{eq1.8}
\sum_{x_{v}}\lambda_{L}^{M}(x_{L})=\sum_{x_{v}}\lambda_{L}^{M}(x_{v},x_{L\backslash\{v\}})=0\quad\forall~x_{L}\in\mathfrak{X}_{L},~v\in L.
\end{equation}
\end{lemma}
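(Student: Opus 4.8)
The plan is to work directly from the M\"{o}bius representation (\ref{eq1.4}), since the whole statement reduces to understanding how the operation ``sum over the single coordinate $x_v$'' acts on the elementary conditional averages $\nu_{L'}^{M}$. Fix $v\in L$. By linearity,
\[
\sum_{x_v}\lambda_{L}^{M}(x_L)=\sum_{L'\subseteq L}(-1)^{|L\backslash L'|}\sum_{x_v}\nu_{L'}^{M}(x_{L'}),
\]
so it suffices to evaluate $\sum_{x_v}\nu_{L'}^{M}(x_{L'})$ for each $L'\subseteq L$ and then exploit the alternating signs.

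First I would record two elementary identities for $\nu_{L'}^{M}$. If $v\notin L'$, then $\nu_{L'}^{M}(x_{L'})$ does not depend on $x_v$, so $\sum_{x_v}\nu_{L'}^{M}(x_{L'})=|\mathfrak{X}_v|\,\nu_{L'}^{M}(x_{L'})$. If $v\in L'$, then summing (\ref{eq1.3}) over $x_v$ removes the constraint $j_v=x_v$ in the inner sum, so that it now ranges over all $j_M$ with $j_{L'\backslash\{v\}}=x_{L'\backslash\{v\}}$; since $M\backslash(L'\backslash\{v\})=(M\backslash L')\cup\{v\}$ is a disjoint union, we have $|\mathfrak{X}_{M\backslash(L'\backslash\{v\})}|=|\mathfrak{X}_v|\,|\mathfrak{X}_{M\backslash L'}|$, and this yields $\sum_{x_v}\nu_{L'}^{M}(x_{L'})=|\mathfrak{X}_v|\,\nu_{L'\backslash\{v\}}^{M}(x_{L'\backslash\{v\}})$. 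The upshot is that, after summing over $x_v$, a subset containing $v$ and the corresponding subset with $v$ deleted produce the same function $\nu_{L'\backslash\{v\}}^{M}$ up to the common factor $|\mathfrak{X}_v|$.

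The final step is a pairing argument. Partition the subsets $L'\subseteq L$ into pairs $\{L'',\,L''\cup\{v\}\}$ indexed by $L''\subseteq L\backslash\{v\}$. By the two identities above, both members of a pair contribute $|\mathfrak{X}_v|\,\nu_{L''}^{M}(x_{L''})$ after summing over $x_v$, while their M\"{o}bius coefficients are $(-1)^{|L\backslash L''|}$ and $(-1)^{|L\backslash(L''\cup\{v\})|}=(-1)^{|L\backslash L''|-1}$, which are negatives of each other because $v\in L\backslash L''$. Hence the two contributions cancel, each pair sums to zero, and therefore $\sum_{x_v}\lambda_{L}^{M}(x_L)=0$, as claimed.

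The only real obstacle is the bookkeeping in the ``$v\in L'$'' case --- keeping track of which index set the cardinality factor $|\mathfrak{X}_{M\backslash L'}|$ refers to before and after the collapse, and matching it against the M\"{o}bius sign --- but once the two $\nu$-identities are in hand the cancellation is automatic. As a sanity check, the same conclusion follows in one line from the product representation (\ref{eq1.5}), since $\sum_{x_v}(|\mathfrak{X}_v|\mathbb{I}_{\{x_v=y_v\}}-1)=|\mathfrak{X}_v|-|\mathfrak{X}_v|=0$ annihilates the entire factor indexed by $v$; I prefer the argument above because it stays within the representation (\ref{eq1.4}) used throughout the paper.
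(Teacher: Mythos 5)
Your proposal is correct and follows essentially the same route as the paper: the two identities $\sum_{x_v}\nu_{L'}^{M}=|\mathfrak{X}_v|\,\nu_{L'}^{M}$ (for $v\notin L'$) and $\sum_{x_v}\nu_{L'}^{M}=|\mathfrak{X}_v|\,\nu_{L'\backslash\{v\}}^{M}$ (for $v\in L'$) are exactly the paper's facts (2.9) and (2.10), and your pairing of $L''$ with $L''\cup\{v\}$ is just a term-by-term restatement of the paper's reindexing of the two sums over subsets containing and not containing $v$, with the same sign cancellation. The one-line sanity check via (\ref{eq1.5}) is a nice bonus but not needed.
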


The following result states a characteristic property of MLL parameters, which appears as Definition 2 in Evans and Richardson (2013). 
\begin{lemma}\label{lem1.11}
For each $\emptyset\neq M\subseteq V$, the MLL parameters $\lambda_{L}^{M}$, subject to the identifiability constraint in Lemma \ref{lem1.1}, satisfy the identity
\begin{equation}\label{eq1.11a}
\log p_{M}(x_{M})=\sum_{L\subseteq M}\lambda_{L}^{M}(x_{L})\quad\forall~x_{L}\in\mathfrak{X}_{L},~x_{M}\in\mathfrak{X}_{M}.
\end{equation}
\end{lemma}
Consider now two sets of MLL parameters, where parameters for one set are defined within a certain margin and those for the other set are defined within a different margin. Then we have a new and interesting result which states that if the sums of the parameters for both sets are equal, then the individual parameters corresponding to the two distinct margins are also equal.  
\begin{lemma}\label{lem1.1a}
For $\emptyset\neq L\subseteq N\subset M$, we have
\begin{equation}\label{eq1.10a}
\sum_{L'\subseteq L}\lambda_{L'}^{M}(x_{L'})=\sum_{L'\subseteq L}\lambda_{L'}^{N}(x_{L'})\Leftrightarrow\lambda_{L'}^{M}(x_{L'})=\lambda_{L'}^{N}(x_{L'})\quad\forall~ L'\subseteq L, x_{L'}\in\mathfrak{X}_{L'}. 
\end{equation}	
\end{lemma}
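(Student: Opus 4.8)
The plan is to exploit the M\"{o}bius structure of (\ref{eq1.4}) together with the zero-sum identities of Lemma \ref{lem1.1}. Recall the standing convention that all index sets are non-empty unless stated otherwise, so both sums in (\ref{eq1.10a}) range over $\phi\neq L'\subseteq L$; write $D_{L'}(x_{L'})=\lambda_{L'}^{M}(x_{L'})-\lambda_{L'}^{N}(x_{L'})$ for each such $L'$. The reverse implication is then immediate: if $D_{L'}\equiv 0$ for every $\phi\neq L'\subseteq L$, summing these identities over all non-empty $L'\subseteq L$ gives the left-hand equality of (\ref{eq1.10a}) for every $x_{L}$. All the content is in the forward implication, which asserts that a single additive identity among the $\lambda_{L'}$'s forces each individual term to vanish. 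Conceptually, by M\"{o}bius inversion of (\ref{eq1.4}) the full zeta transform $\lambda_{\phi}^{M}(x_{\phi})+\sum_{\phi\neq L'\subseteq L}\lambda_{L'}^{M}(x_{L'})$ equals $\nu_{L}^{M}(x_{L})$, so the hypothesis is a disguised statement about the $\nu$'s; but it is cleaner to disentangle the $\lambda$'s directly.

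For the forward implication, assume $\sum_{\phi\neq L'\subseteq L}D_{L'}(x_{L'})=0$ for every $x_{L}\in\mathfrak{X}_{L}$. Fix a non-empty $S\subseteq L$ and apply the averaging operator $\prod_{v\in L\backslash S}\big(|\mathfrak{X}_{v}|^{-1}\sum_{x_{v}}\big)$ to this identity. For a term indexed by $L'$, any averaged coordinate $v\in L\backslash S$ that also lies in $L'$ annihilates $D_{L'}$ by the zero-sum property of Lemma \ref{lem1.1} (which applies to $\lambda_{L'}^{M}$ and $\lambda_{L'}^{N}$ alike, hence to their difference), whereas coordinates outside $L'$ leave $D_{L'}$ unchanged. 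Hence every $L'$ meeting $L\backslash S$ drops out and only the terms with $L'\subseteq S$ survive, yielding the reduced identity $\sum_{\phi\neq L'\subseteq S}D_{L'}(x_{L'})=0$ for all $x_{S}$, valid for each non-empty $S\subseteq L$.

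I would finish by induction on $|S|$. When $|S|=1$, say $S=\{v\}$, the reduced identity has the single term $D_{\{v\}}(x_{v})=0$. For $|S|>1$, the inductive hypothesis kills every $D_{L'}$ with $\phi\neq L'\subsetneq S$, so the reduced identity collapses to $D_{S}(x_{S})=0$; since $x_{S}$ is arbitrary this gives $\lambda_{S}^{M}=\lambda_{S}^{N}$, and ranging over all non-empty $S\subseteq L$ completes the proof. (Equivalently, fixing $x_{L}$ and regarding $S\mapsto D_{S}(x_{S})$ as a function on the Boolean lattice $2^{L}$, the reduced identities say its zeta transform vanishes, so M\"{o}bius inversion forces the function itself to vanish.) The one step needing care is the averaging reduction: one must verify the combinatorial bookkeeping that precisely the subsets of $S$ remain, so that the induction closes cleanly. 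The nesting hypothesis $L\subseteq N\subset M$ enters only to guarantee that all the parameters involved are well defined (every $L'\subseteq L$ satisfies $L'\subseteq N$ and $L'\subseteq M$) and that the statement is non-trivial; the real engine is the interplay between Lemma \ref{lem1.1} and M\"{o}bius inversion, mirroring the computation already carried out in the proof of Lemma \ref{lem1.1}.
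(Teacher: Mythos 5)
Your proof is correct and uses essentially the same mechanism as the paper's: exploit the zero-sum identities of Lemma \ref{lem1.1} by summing (averaging) over coordinates in $L$ to strip away terms, then iterate over subsets to isolate each $\lambda_{L'}$. Your explicit reduction to $\sum_{\phi\neq L'\subseteq S}D_{L'}=0$ for every non-empty $S\subseteq L$ followed by induction on $|S|$ is in fact a cleaner and more rigorous rendering of the paper's terser ``sum over $L''\neq L'$ and repeat'' step.
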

The next lemma provides a new connection between $\nu_{L}^{M}$ defined in (\ref{eq1.3}) and the MLL parameters $\lambda_{L}^{M}$, thereby stating an equivalent condition for Lemma \ref{lem1.11} to hold. It also generalizes a result of Vellaisamy and Vijay (2007) for ordinary log-linear parameters. 
\begin{lemma}\label{lem1.2}
The MLL parameters $\lambda_{L}^{M}$ satisfy
\begin{equation}\label{eq1.11}
\log p_{M}(x_{M})=\sum_{L\subseteq M}\lambda_{L}^{M}(x_{L})
\end{equation} 
if and only if
\begin{equation}\label{eq1.12}
\nu_{A}^{M}(x_{A})=\sum_{L\subseteq A}\lambda_{L}^{M}(x_{L})\quad\forall~A\subseteq M,
\end{equation}
where $\nu_{L}^{M}$ is given by (\ref{eq1.3}).
\end{lemma}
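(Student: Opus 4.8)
The plan is to prove the two implications separately, keeping in mind that the definition (\ref{eq1.4}) is exactly the M\"{o}bius inversion of the asserted relation (\ref{eq1.12}) over the Boolean lattice of subsets, so the real work is to verify one of the two equations concretely and then read off the other by specialization.

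For the reverse implication I would simply specialize (\ref{eq1.12}) to $A=M$. Since $M\backslash A=\phi$ forces $|\mathfrak{X}_{M\backslash M}|=1$, and the constraint $j_{M}=x_{M}$ leaves only the single term $j_{M}=x_{M}$, definition (\ref{eq1.3}) collapses to $\nu_{M}^{M}(x_{M})=\log p_{M}(x_{M})$. Hence (\ref{eq1.12}) at $A=M$ reads $\log p_{M}(x_{M})=\sum_{L\subseteq M}\lambda_{L}^{M}(x_{L})$, which is precisely (\ref{eq1.11}); no further argument is needed.

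For the forward implication I would substitute the model (\ref{eq1.11}) into the defining expression (\ref{eq1.3}) for $\nu_{A}^{M}$ and interchange the two finite summations to get
\[
\nu_{A}^{M}(x_{A})=\frac{1}{|\mathfrak{X}_{M\backslash A}|}\sum_{L\subseteq M}\ \sum_{\substack{j_{M}\in\mathfrak{X}_{M}\\ j_{A}=x_{A}}}\lambda_{L}^{M}(j_{L}).
\]
The crux is to evaluate the inner sum for each fixed $L\subseteq M$, where a clean dichotomy arises. If $L\subseteq A$, then the constraint $j_{A}=x_{A}$ pins down $j_{L}=x_{L}$, so $\lambda_{L}^{M}(j_{L})=\lambda_{L}^{M}(x_{L})$ is constant over the $|\mathfrak{X}_{M\backslash A}|$ remaining choices of $j_{M\backslash A}$; this contributes $|\mathfrak{X}_{M\backslash A}|\,\lambda_{L}^{M}(x_{L})$ and cancels the prefactor. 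If instead $L\not\subseteq A$, I would choose any single index $v\in L\backslash A$; summing first over the free coordinate $j_{v}$ and invoking the zero-sum property of Lemma \ref{lem1.1} makes the whole contribution vanish. Collecting only the surviving terms yields $\sum_{L\subseteq A}\lambda_{L}^{M}(x_{L})$, which is (\ref{eq1.12}).

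The main obstacle is the forward direction, specifically organizing the inner-sum case analysis so that Lemma \ref{lem1.1} is applied correctly. The delicate point is that for $L\not\subseteq A$ one needs merely \emph{one} index $v\in L\backslash A$ for the zero-sum identity to annihilate the term, after which summation over the remaining free coordinates is immediate; everything else is routine bookkeeping, and the reverse direction is essentially trivial once $\nu_{M}^{M}=\log p_{M}$ is noted.
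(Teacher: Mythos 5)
Your proposal is correct and follows essentially the same route as the paper: the reverse direction is the specialization $A=M$ with $\nu_{M}^{M}=\log p_{M}$, and the forward direction substitutes the model into (\ref{eq1.3}) and splits the sum over $L$ according to whether $L\subseteq A$ (contributing $|\mathfrak{X}_{M\backslash A}|\,\lambda_{L}^{M}(x_{L})$) or $L\not\subseteq A$ (annihilated by the zero-sum identity of Lemma \ref{lem1.1} applied to a free coordinate in $L\backslash A$). Your remark that a single index $v\in L\backslash A$ suffices is a slightly cleaner statement of the same cancellation the paper invokes.
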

The following result deals with the restrictions on MLL parameters for conditional independence models (see Lemma 1 of Rudas {\it et al.} (2010) or Lemma 2 of Evans and Richardson (2013)). Henceforth in this paper, we sometimes denote $S\cup T$ as $ST$ for convenience.
\begin{lemma}\label{lem1.3}
Consider three disjoint subsets $A$, $B$ and $C$ of $V$, where $C$ may be empty. Then $X_{A}\ci X_{B}|X_{C}$ if and only if
\begin{equation}\label{eq1.14}
\lambda_{A'B'C'}^{ABC}=0\quad\textrm{for every}~\emptyset\neq A'\subseteq A,~\emptyset\neq B'\subseteq B,~C'\subseteq C. 
\end{equation}
\end{lemma}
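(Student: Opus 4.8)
The plan is to translate the probabilistic statement $X_{A}\ci X_{B}|X_{C}$ into an additive decomposition of $\log p_{ABC}$ and then read off the interaction parameters from the log-linear expansion of Lemma~\ref{lem1.2}. The bridge is the elementary equivalence that $X_{A}\ci X_{B}|X_{C}$ holds if and only if
\[
\log p_{ABC}(x_{ABC}) = f(x_{AC}) + g(x_{BC})
\]
for some functions $f,g$. Indeed, conditional independence is equivalent to the factorization $p_{ABC}(x_{ABC})=\phi(x_{AC})\psi(x_{BC})$: dividing by $p_{C}$ and summing out $x_{A}$ and $x_{B}$ separately recovers $p_{A|C}$ and $p_{B|C}$ (all legitimate since $p_{V}>0$), and taking logarithms gives the additive form with $f=\log\phi$, $g=\log\psi$.

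For the forward implication I would assume the additive decomposition and fix $L=A'B'C'$ with $A',B'$ nonempty and $C'\subseteq C$. Since each of (\ref{eq1.3}) and (\ref{eq1.4}) is linear in $\log p_{ABC}$, so is the closed form (\ref{eq1.5}); hence it suffices to show separately that the $L$-parameter built from $f$ and that built from $g$ each vanish. The key is an annihilation property: if a function $h(y_{M})$ does not depend on the coordinate $y_{b}$ for some $b\in L$, then its $L$-parameter is $0$, because in (\ref{eq1.5}) the $y_{b}$-summation factors out and $\sum_{y_{b}}(|\mathfrak{X}_{b}|\mathbb{I}_{\{x_{b}=y_{b}\}}-1)=|\mathfrak{X}_{b}|-|\mathfrak{X}_{b}|=0$. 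Applying this with $b\in B'\subseteq L$ to $h=f$ (which is free of all $B$-coordinates) and with $a\in A'\subseteq L$ to $h=g$ (free of all $A$-coordinates) forces $\lambda_{A'B'C'}^{ABC}=0$.

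For the converse I would assume $\lambda_{A'B'C'}^{ABC}=0$ for all nonempty $A'\subseteq A$, $B'\subseteq B$ and all $C'\subseteq C$, and expand $\log p_{ABC}$ via (\ref{eq1.11}) (equivalently the $L=M$ case of the Möbius inversion (\ref{eq1.13})). Grouping the index sets $L\subseteq ABC$ by the pair $(L\cap A,\,L\cap B)$, the terms with $L\cap A\neq\phi$ and $L\cap B\neq\phi$ are precisely the cross parameters, which vanish by hypothesis; every surviving $L$ satisfies $L\subseteq AC$ or $L\subseteq BC$, so its parameter depends only on $x_{AC}$ or only on $x_{BC}$. Assigning the pure-$C$ terms ($L\subseteq C$) and the constant $\lambda_{\phi}^{ABC}$ to the first group yields $\log p_{ABC}(x_{ABC})=f(x_{AC})+g(x_{BC})$, and the equivalence from the first step gives $X_{A}\ci X_{B}|X_{C}$. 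The case $C=\phi$ is identical, the first step reducing to ordinary factorization of $p_{AB}$.

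The two bookkeeping manipulations — the factorization argument and the regrouping of (\ref{eq1.11}) — are routine. The step I expect to require the most care is the annihilation property in the forward direction: making precise why an interaction straddling both $A$ and $B$ is killed by the mean-centering contrast built into (\ref{eq1.5}), and justifying the linear splitting $\lambda[\log p_{ABC}]=\lambda[f]+\lambda[g]$ even though $f$ and $g$ are not themselves log-probabilities (only the algebraic linearity of (\ref{eq1.4})--(\ref{eq1.5}) in their argument is used).
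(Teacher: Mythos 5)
The paper does not actually prove Lemma \ref{lem1.3}: it is quoted as a known result, with the reader referred to Lemma 1 of Rudas \emph{et al.}\ (2010) and Eq.~(6) of Forcina \emph{et al.}\ (2010). Your argument is therefore a genuine addition rather than a rederivation, and it is correct and complete. The two pillars both hold: the map $\log p_{ABC}\mapsto\lambda_{L}^{ABC}$ defined by (\ref{eq1.4})--(\ref{eq1.5}) is linear in its argument, so it may legitimately be applied to $f$ and $g$ separately even though neither is a log-probability; and the annihilation identity $\sum_{y_{b}}(|\mathfrak{X}_{b}|\mathbb{I}_{\{x_{b}=y_{b}\}}-1)=0$ kills the contribution of any summand not depending on some coordinate $b$ in the effect $L$, which disposes of the forward direction since an effect meeting both $A$ and $B$ contains a $B$-coordinate that $f$ ignores and an $A$-coordinate that $g$ ignores. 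The converse via regrouping the expansion (\ref{eq1.11}) into an $AC$-part and a $BC$-part, followed by the factorization criterion for conditional independence (valid here because $p_{V}>0$), is the standard route and is sound. One point worth making explicit if you write this up: the statement of (\ref{eq1.14}) must be read with $A'$ and $B'$ nonempty (consistent with the paper's blanket convention that subsets are nonempty unless stated otherwise, and with how (\ref{eq4.2}) is used later); taken literally with $A'=B'=\phi$ allowed it would force $\lambda_{C'}^{ABC}=0$, which conditional independence does not imply, and your proof correctly adopts the nonempty reading.
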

 If $C=\emptyset$ in Lemma \ref{lem1.3}, then we have marginal independence between $X_{A}$ and $X_{B}$. The result in this case was proved for multivariate logistic parameters by Kauermann (1997). 

Finally, we provide a new, explicit expression for certain conditional distribution parameters defined as sums of MLL parameters having a common effect. It also shows the relationship between the two classes of parameters.
\begin{lemma}\label{lem1.5}
For $\emptyset\neq L\subseteq M\subseteq V$ with $N=M\backslash L$, define $\kappa_{L|N}(x_{L}|x_{N})=\sum_{L\subseteq A\subseteq M}\lambda_{A}^{M}(x_{A})$. Then
\begin{equation}\label{eq1.21}
 \kappa_{L|N}(x_{L}|x_{N})=\sum_{L'\subseteq L}(-1)^{|L\backslash L'|}\frac{1}{|\mathfrak{X}_{L\backslash L'}|}\sum_{\substack{j_{M}\in\mathfrak{X}_{M}\\j_{L'\cup N}=x_{L'\cup N}}}\log p_{M}(j_{M}).
\end{equation}
\end{lemma}
By Lemma 5 of Evans and Richardson (2013), for fixed $M$ and $L\subseteq M$, the collection of MLL parameters $\{\tilde{\lambda}_{A}^{M}(x_{A})|L\subseteq A\subseteq M,x_{A}\in\tilde{\mathfrak{X}}_{A}\}$ in Lemma \ref{lem1.5} together with the $(|L|-1)$-dimensional marginal distributions of $X_{L}$ conditional on $X_{M\backslash L}$ smoothly parameterize the conditional distribution of $X_{L}$ given $X_{M\backslash L}$.

\section{Collapsibility and Strict Collapsibility}
In this section, we establish some necessary and sufficient conditions for collapsibility and strict collapsibility of a multidimensional contingency table with respect to MLL parameters. These conditions involve only simple functions of the cell probabilities, which can be easily computed from the table. Closed-form MLE's of the cell probabilities exist under certain types of models (for example, conditional independence models), otherwise iterative procedures like the iterative proportional fitting (IPF) algorithm may be used to obtain the MLE's. 

As mentioned in Section 2, note that a classical or ordinary log-linear parameter denoted by $\lambda_{L}^{V}$ ($L\subseteq V$) is a special case of a MLL parameter denoted by $\lambda_{L}^{M}$ ($L\subseteq M\subseteq V$). For example, if $V=\{1,2,3\}$, then the ordinary log-linear parameterization (see Bergsma and Rudas (2002)) is unique and is given by
\begin{equation}\label{eq1}
\{\lambda_{L}^{123} | L\subseteq \{1,2,3\}\} = \{\lambda_{\emptyset}^{123}, \lambda_{1}^{123}, \lambda_{2}^{123}, \lambda_{3}^{123}, \lambda_{12}^{123}, \lambda_{13}^{123}, \lambda_{23}^{123}, \lambda_{123}^{123} \}.
\end{equation} 
However, a MLL parameterization is not unique. It depends on the choice of the sequence of margins (subsets of $V$) and the order in which the effects are defined within the margins. In the above case, if $\mathcal{M}=\{12,13,123\}$ is the sequence of margins, then a corresponding MLL parameterization would be
\begin{equation}\label{eq2}
\{\lambda_{L}^{M} | L\subseteq M\in\mathcal{M}\} = \{\lambda_{\emptyset}^{12}, \lambda_{1}^{12}, \lambda_{2}^{12}, \lambda_{12}^{12}, \lambda_{3}^{13}, \lambda_{13}^{13}, \lambda_{23}^{123}, \lambda_{123}^{123} \}.
\end{equation} 
Indeed, (\ref{eq2}) represents the complete and hierarchical MLL parameterization (see Definition \ref{def1.1}) for the above sequence of margins. From (\ref{eq1.4}), note that MLL parameters are functions of the marginal probabilities $p_{M}$, while ordinary log-linear parameters are functions of the joint probability $p_{V}$ in a table. Also, observe that for a given multidimensional contingency table ($V$ is fixed), ordinary log-linear parameters can be defined only for the marginal $V$ corresponding to the full table, while MLL parameters can be defined for any arbitrary marginal $M\subseteq V$ of the full table. From Definitions \ref{def2.1} and \ref{def3.1} below, it is thus meaningful to discuss collapsibility and strict collapsibility of MLL parameters rather than ordinary log-linear parameters for such a table. Whittemore (1978) and Vellaisamy and Vijay (2007), among others, studied collapsibility and strict collapsibility of ordinary log-linear parameters for a multidimensional contingency table. However, this is not technically correct as discussed above. Therefore our results in this section appropriately modify the results of Vellaisamy and Vijay (2007) (see Theorems 3.1-3.3 for collapsibility, and Lemma 4.1 alongwith Theorems 4.1-4.2 for strict collapsibility in their paper). Moreover, we generalize the above results  by considering two arbitrary marginals of a multidimensional contingency table instead of one of them being the full table usually considered in the literature.

\subsection{Collapsibility of MLL parameters} Bergsma and Rudas (2002) defined a complete table $\mathfrak{X}_{V}$ to be {\it collapsible} into the marginal table $\mathfrak{X}_{M}$ with respect to $\emptyset\neq L\subseteq M$ if $\lambda_{L}^{V}(x_{L})=\lambda_{L}^{M}(x_{L})$ for all $x_{L}\in\mathfrak{X}_{L}$. This condition implies that the amount of information about the interaction among the variables in $L$ is the same in $\mathfrak{X}_{V}$ and $\mathfrak{X}_{M}$. Suppose now one is interested in studying the association among variables in $L$ within some strict subsets (margins) of $V$ only. An application would be to check collapsibility for the corresponding marginal tables or conditional independence of variables in those margins only.  For this purpose, we define collapsibility by considering two arbitrary marginal tables of $\mathfrak{X}$. Specifically, we consider the marginal tables $\mathfrak{X}_{M}$ and $\mathfrak{X}_{N}$, where $M\subset N\subseteq V$ with $|V|\geq 2$. 
\begin{definition}\label{def2.1}
A $|N|$-dimensional table $\mathfrak{X}_{N}$ is collapsible over $N\backslash M$ into a $|M|$-dimensional table $\mathfrak{X}_{M}$ with respect to $\lambda_{L}^{N}$, where $\emptyset\neq L\subseteq M\subset N$ if
\begin{equation}\label{eq2.2}
\lambda_{L}^{M}(x_{L})=\lambda_{L}^{N}(x_{L}),\quad\forall~x_{L}\in\mathfrak{X}_{L}.
\end{equation}
\end{definition}   
Since $V$ is fixed for a given table and $\lambda_{L}^{N}\neq\lambda_{L}^{V}$ unless $N=V$, Definition \ref{def2.1} differs from that of Bergsma and Rudas (2002), and cannot be obtained by simply relabelling the margins. 
\noindent Define now
\begin{equation}\label{eq2.3}
d^{(M)}(x_{M}) = \log p_{M}(x_{M}) - \nu_{M}^{N}(x_{M})
\end{equation}
and for any $Z\subseteq M$
\begin{equation}\label{eq2.4}
\tilde{d}_{Z}^{(M)}(x_{Z}) = \frac{1}{|\mathfrak{X}_{M\backslash Z}|}\sum_{j_{M}\in\mathfrak{X}_{M}:j_{Z}=x_{Z}}d^{(M)}(j_{M}).
\end{equation}
By definition, $\nu_{M}^{N}(x_{M})$ in (\ref{eq2.3}) is an average of the logarithms of marginal cell probabilities in $\mathfrak{X}_{N}$ over levels of variables in $\mathfrak{X}_{N\backslash M}$. So $d^{(M)}(x_{M})$ represents the difference between the logarithm of a marginal cell probability in $\mathfrak{X}_{M}$ and the quantity $\nu_{M}^{N}(x_{M})$. Also $\tilde{d}_{Z}^{(M)}(x_{Z})$ is the average of the differences $d^{(M)}(x_{M})$ over levels of variables in $\mathfrak{X}_{M\backslash Z}$ for any subset $Z$ of $M$. The following result 
characterizes the conditions for which collapsibility holds with respect to the MLL parameter $\lambda_{L}^{N}$.
\begin{theorem}\label{th2.1}
Let $\emptyset\neq L\subseteq M\subset N$ and $\delta_{Z}=(\lambda_{Z}^{M}-\lambda_{Z}^{N})$ for $Z\subseteq M$. The following conditions are equivalent to collapsibility of  $\mathfrak{X}_{N}$ over $N\backslash M$ into $\mathfrak{X}_{M}$ with respect to $\lambda_{L}^{N}$.
\begin{enumerate}
\item[(i)] $\delta_{L}(x_{L})=0\quad\forall~ x_{L}\in\mathfrak{X}_{L}$; 
\item[(ii)] ${\tilde{d}_{L}}^{(M)}(x_{L})=\sum_{Z\subset L}\delta_{Z}(x_{Z})~\quad\forall~ x_{L}\in\mathfrak{X}_{L},~x_{Z}\in\mathfrak{X}_{Z} $; 
\item[(iii)] $\sum_{Z\subseteq L}(-1)^{|L\backslash Z|} {\tilde{d}_{Z}}^{(M)}(x_{Z})=0~\quad\forall~ x_{Z}\in\mathfrak{X}_{Z}$. 
\end{enumerate}  	
\end{theorem}
Note that Definition \ref{def2.1} gives a condition for checking collapsibility by comparing MLL parameters in $\mathfrak{X}_{M}$ and $\mathfrak{X}_{N}$. However, the necessary and sufficient conditions for collapsibility in Theorem \ref{th2.1} involve simple expressions based on marginal cell probabilities in $\mathfrak{X}_{M}$ , $\mathfrak{X}_{N}$, $\mathfrak{X}_{N\backslash M}$ and $\mathfrak{X}_{M\backslash Z}$ which are easily verifiable. Hence, they provide additional insight into the phenomenon of collapsibility in multidimensional tables.  
\begin{example}\label{ex2.1}
	Consider the 3-dimensional Table \ref{t1} discussed in Whittemore (1978), which cross-classifies variables $X_{1}$, $X_{2}$ and $X_{3}$ having $2$, $2$ and $3$ levels respectively.
	\begin{table}[ht]
		\caption{$2\times 2\times 3$ Table}\label{t1}
		\begin{center}
			$
			\begin{array}{|c|c|c|c|}\hline
			& & X_{2} = 1 & X_{2} = 2 \\ \hline
			X_{3} = 1 & X_{1} = 1 & 75 & 24 \\   
			& X_{1} = 2 & 20  & 16 \\ \hline 
			X_{3} = 2 & X_{1} = 1 & 25 & 8 \\   
			& X_{1} = 2 & 60 & 48 \\ \hline 
			X_{3} = 3 & X_{1} = 1 & 20 & 16 \\   
			& X_{1} = 2 & 16 & 32 \\ \hline 
			\end{array}
			$
		\end{center} 
	\end{table} 
	We have 
	\begin{align*}
		\tilde{d}^{(12)}_{\emptyset}&=1.2356;~	\tilde{d}^{(12)}_{1}(1)=1.2356,~\tilde{d}^{(12)}_{1}(2)=1.2356,~\tilde{d}^{(12)}_{2}(1)=1.2768,~\tilde{d}^{(12)}_{2}(2)=1.1944; \\
		\tilde{d}^{(12)}_{12}(1,1)&=1.2768,~\tilde{d}^{(12)}_{12}(1,2)=1.1944,~\tilde{d}^{(12)}_{12}(2,1)=1.2768,~\tilde{d}^{(12)}_{12}(2,2)=1.1944.
	\end{align*}
	So we obtain
	\begin{align*}
		\tilde{d}^{(12)}_{12}(1,1) - \tilde{d}^{(12)}_{1}(1)&=\tilde{d}^{(12)}_{2}(1) - \tilde{d}^{(12)}_{\emptyset} = 0.0412; \tilde{d}^{(12)}_{12}(1,2) - \tilde{d}^{(12)}_{1}(1) = \tilde{d}^{(12)}_{2}(2) - \tilde{d}^{(12)}_{\emptyset} = -0.0412; \\
		\tilde{d}^{(12)}_{12}(2,1) - \tilde{d}^{(12)}_{1}(2)&=\tilde{d}^{(12)}_{2}(1) - \tilde{d}^{(12)}_{\emptyset} = 0.0412; \tilde{d}^{(12)}_{12}(2,2) - \tilde{d}^{(12)}_{1}(2) = \tilde{d}^{(12)}_{2}(2) - \tilde{d}^{(12)}_{\emptyset} = -0.0412,
	\end{align*}
	which satisfy Condition (iii) of Theorem \ref{th2.1}. Also, we observe that $\hat{\lambda}_{12}^{12}(1,1)=\hat{\lambda}_{12}^{123}(1,1)=\hat{\lambda}_{12}^{12}(2,2)=\hat{\lambda}_{12}^{123}(2,2)=0.2290$ and $\hat{\lambda}_{12}^{12}(1,2)=\hat{\lambda}_{12}^{123}(1,2)=\hat{\lambda}_{12}^{12}(2,1)=\hat{\lambda}_{12}^{123}(2,1)=-0.2290$ implying $\delta_{12}(x_{1},x_{2})=0$ for $x_{1},x_{2}\in\{1,2\}$, which satisfies Condition (i) of Theorem \ref{th2.1}. Finally
	\begin{align*}
		\delta_{\emptyset} + \delta_{1}(1) + \delta_{2}(1) = 1.2768 =  \tilde{d}^{(12)}_{12}(1,1);~
		\delta_{\emptyset} + \delta_{1}(1) + \delta_{2}(2) = 1.1944 =  \tilde{d}^{(12)}_{12}(1,2); \\
		\delta_{\emptyset} + \delta_{1}(2) + \delta_{2}(1) = 1.2768 =  \tilde{d}^{(12)}_{12}(2,1);~
		\delta_{\emptyset} + \delta_{1}(2) + \delta_{2}(2) = 1.1944 =  \tilde{d}^{(12)}_{12}(2,2),
	\end{align*}
	which satisfy Condition (ii) of Theorem \ref{th2.1}. Hence, Table \ref{t1} can be collapsed over $X_{3}$ into a $2\times 2$ table with respect to $\lambda_{12}^{123}$.
\end{example}
As shown in Example 3.2 of Vellaisamy and Vijay (2007), if one is interested in studying the association (say conditional independence) between certain variables with others, then the full table can be collapsed into a marginal table with respect to ordinary log-linear parameters involving those variables as effects. This motivates the case for studying collapsibility with respect to more than one MLL parameter. 
The next result provides a necessary and sufficient condition for collapsibility with respect to a collection of MLL parameters having a set of common effects.
\begin{theorem}\label{th2.2}
Let $\emptyset\neq L\subseteq M\subset N$. A table $\mathfrak{X}_{N}$ is collapsible over $N\backslash M$ into the table $\mathfrak{X}_{M}$ with respect to the set $C_{S}=\{\lambda_{A}^{N}|S\subseteq A\subseteq L\}$ of MLL parameters if and only if
\begin{equation}\label{eq2.9}
\sum_{R\subseteq S}(-1)^{|S\backslash R|}{\tilde{d}}_{L_{R}}^{(M)}(x_{L_{R}})=0,
\end{equation}  
where $L_{R}=L\backslash R$ and $1\leq |S|\leq |M|$.
\end{theorem}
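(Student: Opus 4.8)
My plan is to reduce Theorem \ref{th2.2} to a statement about the collection of parameters $\{\delta_{A} : S \subseteq A \subseteq L\}$, where $\delta_{A} = \lambda_{A}^{M} - \lambda_{A}^{N}$, and then apply the machinery already built in Theorem \ref{th2.1}. By Definition \ref{def2.1}, collapsibility with respect to the set $C_{S} = \{\lambda_{A}^{N} : S \subseteq A \subseteq L\}$ means precisely that $\lambda_{A}^{M}(x_{A}) = \lambda_{A}^{N}(x_{A})$ for every $A$ with $S \subseteq A \subseteq L$, i.e. that $\delta_{A}(x_{A}) = 0$ for all such $A$ simultaneously. So the theorem asserts the equivalence
\[
\Big(\delta_{A}(x_{A}) = 0 \ \ \forall\, S \subseteq A \subseteq L\Big) \iff \sum_{R \subseteq S}(-1)^{|S \backslash R|}\,\tilde{d}_{L_{R}}^{(M)}(x_{L_{R}}) = 0,
\]
where $L_{R} = L \backslash R$. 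The essential tool is the representation (\ref{eq2.7}) from the proof of Theorem \ref{th2.1}, namely $\tilde{d}_{Z}^{(M)}(x_{Z}) = \sum_{A \subseteq Z}\delta_{A}(x_{A})$ for any $Z \subseteq M$, which I would invoke for the sets $Z = L_{R} = L \backslash R$ as $R$ ranges over subsets of $S$.

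\textbf{The forward direction.} Assuming collapsibility, so that $\delta_{A}(x_{A}) = 0$ whenever $S \subseteq A \subseteq L$, I would substitute the expression $\tilde{d}_{L_{R}}^{(M)} = \sum_{A \subseteq L_{R}}\delta_{A}$ from (\ref{eq2.7}) into the left-hand side of (\ref{eq2.9}) and interchange the order of summation. The resulting double sum is $\sum_{A \subseteq L}\delta_{A}(x_{A})\sum_{R}(-1)^{|S \backslash R|}$, where the inner sum runs over $R \subseteq S$ with $A \subseteq L \backslash R$, equivalently $R \subseteq S \backslash A$ (using $R \subseteq S \subseteq L$). The inner sum over $R \subseteq S \backslash A$ of $(-1)^{|S \backslash R|} = (-1)^{|S|}(-1)^{|R|}$ evaluates, by the binomial identity $\sum_{R \subseteq T}(-1)^{|R|} = \mathbb{I}_{\{T = \phi\}}$, to zero unless $S \backslash A = \phi$, i.e. unless $S \subseteq A$. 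Thus only the terms with $S \subseteq A \subseteq L$ survive, and each of these carries the factor $\delta_{A}(x_{A}) = 0$ by the collapsibility hypothesis; hence (\ref{eq2.9}) holds.

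\textbf{The reverse direction and the main obstacle.} For the converse I would again use (\ref{eq2.7}) to rewrite the left-hand side of (\ref{eq2.9}) as $\sum_{S \subseteq A \subseteq L}\delta_{A}(x_{A})$ (the same Möbius computation as above shows the entire alternating sum collapses to exactly this quantity). So the hypothesis (\ref{eq2.9}) says only that the single sum $\sum_{S \subseteq A \subseteq L}\delta_{A}(x_{A}) = 0$, and I must upgrade this one scalar identity to the vanishing of each individual $\delta_{A}$ for $S \subseteq A \subseteq L$. This is the crux of the argument. The mechanism is the sum-to-zero property of MLL parameters from Lemma \ref{lem1.1}: each $\delta_{A} = \lambda_{A}^{M} - \lambda_{A}^{N}$ inherits the identifiability constraint that summing over any single coordinate $x_{v}$ with $v \in A$ gives zero. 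I would argue, in the spirit of Lemma \ref{lem1.1a}, that these orthogonality relations let one isolate each $\delta_{A}$ by applying suitable coordinate-summation operators to the identity $\sum_{S \subseteq A \subseteq L}\delta_{A} = 0$: summing over the coordinates indexed by $L \backslash A'$ annihilates every term except those $\delta_{A}$ with $A \subseteq A'$, and a nested Möbius/inclusion–exclusion argument over the lattice of sets between $S$ and $L$ then peels off the individual terms, yielding $\delta_{A}(x_{A}) = 0$ for each $S \subseteq A \subseteq L$. The delicate point to handle carefully is precisely this separation step: verifying that the common-effect structure (every $A$ contains the fixed set $S$) combined with Lemma \ref{lem1.1} forces termwise vanishing rather than merely the vanishing of the aggregate sum.
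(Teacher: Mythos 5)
Your proposal is correct and follows essentially the same route as the paper: both reduce condition (\ref{eq2.9}) via (\ref{eq2.6})--(\ref{eq2.7}) and an inclusion--exclusion computation to the single identity $\sum_{S\subseteq A\subseteq L}\bigl(\lambda_{A}^{M}(x_{A})-\lambda_{A}^{N}(x_{A})\bigr)=0$, and both then upgrade this aggregate identity to termwise vanishing by exploiting the sum-to-zero property of Lemma \ref{lem1.1} (the paper peels off terms iteratively starting from $\lambda_{S}$, exactly the coordinate-summation argument you sketch). Your reverse-direction separation step, though left schematic, identifies the correct mechanism and would go through as described.
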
 
In Theorem \ref{th2.2}, $S\subseteq M$ denotes the set of common effects for the MLL parameters in the collection $C_{S}$. The necessary and sufficient condition for collapsibility with respect to $C_{S}$ involves a linear combination of the averages $\tilde{d}_{L_{R}}^{(M)}(x_{L_{R}})$ (over variables in $\mathfrak{X}_{M\backslash L_{R}}$) of the differences $d^{(M)}(x_{M})$ (see (\ref{eq2.3}) and (\ref{eq2.4})) for all subsets $R$ of $S$. 
\begin{example}\label{ex2.2}
	Consider the hypothetical $2\times 2\times 3$ Table \ref{t2} similar to the one discussed in Agresti (1990), which cross-classifies $800$ boys according to whether a boy is scout ($X_{1}$), his juvenile delinquency status ($X_{2}$) and his socioeconomic status ($X_{3}$). Here variables $X_{1}$, $X_{2}$ and $X_{3}$ have $2$ (Yes and No), $2$ (Yes and No) and $3$ (Low, Medium and High) levels respectively.
	\begin{table}[ht]
		\caption{$2\times 2\times 3$ Table}\label{t2}
		\begin{center}
			$
			\begin{array}{|c|c|c|c|}\hline
			& & X_{2} = 1 & X_{2} = 2 \\ \hline
			X_{3} = 1 & X_{1} = 1 & 10 & 40 \\   
			& X_{1} = 2 & 40  & 160 \\ \hline 
			X_{3} = 2 & X_{1} = 1 & 18 & 132 \\   
			& X_{1} = 2 & 18 & 132 \\ \hline 
			X_{3} = 3 & X_{1} = 1 & 8 & 192 \\   
			& X_{1} = 2 & 2 & 48 \\ \hline 
			\end{array}
			$
		\end{center} 
	\end{table} 
	From Table \ref{t2}, we have 
	\begin{equation}\label{t2eq1}
		\tilde{d}_{3}^{(13)}(1) = \tilde{d}_{13}^{(13)}(i,1) = 0.9163;~ \tilde{d}_{3}^{(13)}(2) = \tilde{d}_{13}^{(13)}(i,2) = 1.1240;~ \tilde{d}_{3}^{(13)}(3) = \tilde{d}_{13}^{(13)}(i,3) = 1.6298 
	\end{equation}
	for $i\in\{1,2\}$ and 
	\begin{equation}\label{t2eq2}
		\tilde{d}_{3}^{(23)}(1) = \tilde{d}_{23}^{(23)}(j,1) = 0.9163;~ \tilde{d}_{3}^{(23)}(2) = \tilde{d}_{23}^{(23)}(j,2) = 0.6931;~ \tilde{d}_{3}^{(23)}(3) = \tilde{d}_{23}^{(23)}(j,3) = 0.9163
	\end{equation}
	for $j\in\{1,2\}$, which satisfy Condition (\ref{eq2.9}) of Theorem \ref{th2.2}. Hence, Table \ref{t2} is collapsible over $X_{2}$ into a $2\times 3$ table with respect to $\lambda_{1}^{123}$ and $\lambda_{13}^{123}$ using (\ref{t2eq1}). It is also collapsible over $X_{1}$ into a $2\times 3$ table with respect to $\lambda_{2}^{123}$ and $\lambda_{23}^{123}$ using (\ref{t2eq2}). Indeed, it can be shown that $\hat{\lambda}_{1}^{13}(i)=\hat{\lambda}_{1}^{123}(i)=0$ for $i\in\{1,2\}$, $\hat{\lambda}_{13}^{13}(1,1)=\hat{\lambda}_{13}^{123}(1,1)=\hat{\lambda}_{13}^{13}(3,3)=\hat{\lambda}_{13}^{123}(3,3)=-0.6931$, $\hat{\lambda}_{13}^{13}(1,2)=\hat{\lambda}_{13}^{123}(1,2)=\hat{\lambda}_{13}^{13}(2,2)=\hat{\lambda}_{13}^{123}(2,2)=0$ and $\hat{\lambda}_{13}^{13}(1,3)=\hat{\lambda}_{13}^{123}(1,3)=\hat{\lambda}_{13}^{13}(3,1)=\hat{\lambda}_{13}^{123}(3,1)=0.6931$. Also, $\hat{\lambda}_{2}^{23}(1)=\hat{\lambda}_{2}^{123}(1)=-1.0928$, $\hat{\lambda}_{2}^{23}(2)=\hat{\lambda}_{2}^{123}(2)=1.0928$, $\hat{\lambda}_{23}^{13}(1,1)=\hat{\lambda}_{23}^{123}(1,1)=0.3996$, $\hat{\lambda}_{23}^{13}(1,2)=\hat{\lambda}_{23}^{123}(1,2)=0.0966$,
	$\hat{\lambda}_{23}^{13}(1,3)=\hat{\lambda}_{23}^{123}(1,3)=-0.4962$,
	$\hat{\lambda}_{23}^{13}(2,1)=\hat{\lambda}_{23}^{123}(2,1)=-0.3996$, $\hat{\lambda}_{23}^{13}(2,2)=\hat{\lambda}_{23}^{123}(2,2)=-0.0966$ and
	$\hat{\lambda}_{23}^{13}(2,3)=\hat{\lambda}_{23}^{123}(2,3)=0.4962$.
\end{example}

\subsection{Strict Collapsibility of MLL parameters}
We next consider a stronger version of collapsibility, namely, strict collapsibility. For log-linear marginal models, Bergsma and Rudas (2002) mentioned that in addition to $\lambda _{L}^{V}(x_{L}) = \lambda _{L}^{M}(x_{L})$, if $\lambda _{K}^{V}(x_{K}) = 0$ for all $\emptyset\neq L\subset K\not\subseteq M$, then the full table $\mathfrak{X}_{V}$ is said to be {\it strictly collapsible} into the marginal table $\mathfrak{X}_{M}$ with respect to $L$. This implies that the association between the variables in $L$ is the same in both the tables $\mathfrak{X}_{V}$ and $\mathfrak{X}_{M}$ conditionally on any subset of variables not in $M$. 
Similar to Definition \ref{def2.1}, we  provide a definition of strict collapsibility as follows. 
\begin{definition}\label{def3.1}
A $|N|$-dimensional table $\mathfrak{X}_{N}$ is strictly collapsible over $N\backslash M$ into a $|M|$-dimensional table $\mathfrak{X}_{M}$ with respect to $\lambda_{L}^{N}$, where $\emptyset\neq L\subseteq M\subset N$ if
\begin{enumerate}
\item[(i)] $\lambda_{L}^{M}(x_{L}) = \lambda_{L}^{N}(x_{L})\quad\forall~x_{L}\in\mathfrak{X}_{L}$, 
\item[(ii)] $\lambda_{Z}^{N}(x_{Z})=0\quad\forall~L\subset Z\not\subseteq M$, $Z\subseteq N$.		
\end{enumerate}		
\end{definition}
Since $V$ is fixed, note that Definition \ref{def3.1} differs from that of Bergsma and Rudas (2002) unless $N=V$, and cannot be obtained by simply relabelling the margins. 
The result below gives an equivalent expression for Condition (ii) of Definition \ref{def3.1}. 
\begin{lemma}\label{lem3.1}
For a table $\mathfrak{X}_{N}$, we have
\begin{equation*}
\lambda_{Z}^{N}(x_{Z})=0\quad\forall~\emptyset\neq L\subset Z\subseteq N, Z\cap (N\backslash M)\neq\emptyset
\end{equation*} 	
if and only if
\begin{equation}\label{eq3.1}
\sum_{\substack{L\subset Z\subseteq N\\Z\cap (N\backslash M)\neq\emptyset}}\lambda_{Z}^{N}(x_{Z})=0.
\end{equation}
\end{lemma}
Using Lemma \ref{lem3.1}, the following result establishes a necessary and sufficient condition for strict collapsibility with respect to a MLL parameter. 
\begin{theorem}\label{th3.1}
Suppose a table $\mathfrak{X}_{N}$ is collapsible over $N\backslash M$ into the table $\mathfrak{X}_{M}$ with respect to $\lambda_{L}^{N}$, where $\emptyset\neq L\subseteq M\subset N$. Then it is strictly collapsible if and only if
\begin{equation}\label{eq3.5}
\sum_{Z\subseteq L}(-1)^{|L\backslash Z|}\nu_{Z\cup(N\backslash L)}^{N}(x_{Z},x_{N\backslash L}) =\sum_{Z\subseteq L}(-1)^{|L\backslash Z|}\nu_{Z\cup(M\backslash L)}^{N}(x_{Z},x_{M\backslash L}).  
\end{equation}	
\end{theorem}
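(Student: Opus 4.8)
The plan is to reduce strict collapsibility to the single vanishing condition of Lemma \ref{lem3.1}, and then to show that the two sides of (\ref{eq3.5}) are exactly the ``full'' and ``truncated'' sums of $\lambda_Y^N$ over the supersets of $L$, so that their equality is precisely (\ref{eq3.1}).

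First, since collapsibility (Condition (i) of Definition \ref{def3.1}) holds by hypothesis, the table is strictly collapsible if and only if Condition (ii) of Definition \ref{def3.1} holds, i.e. $\lambda_Z^N(x_Z)=0$ for all $L\subset Z\subseteq N$ with $Z\not\subseteq M$. For $Z\subseteq N$ the requirement $Z\not\subseteq M$ is the same as $Z\cap(N\backslash M)\neq\phi$, so Condition (ii) is exactly the left-hand statement of Lemma \ref{lem3.1}; by that lemma it is equivalent to the single equation (\ref{eq3.1}). Hence it suffices to prove that (\ref{eq3.5}) and (\ref{eq3.1}) are equivalent.

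Next, I would simplify each side of (\ref{eq3.5}) by M\"{o}bius inversion. Writing $W=N\backslash L$ on the left and $W'=M\backslash L$ on the right, and using $\nu_K^N(x_K)=\sum_{Y\subseteq K}\lambda_Y^N(x_Y)$ from (\ref{eq1.13}) with $K=Z\cup W$ and $K=Z\cup W'$, I would interchange the order of summation so that each $\lambda_Y^N(x_Y)$ carries the scalar coefficient $\sum_{Y\cap L\subseteq Z\subseteq L}(-1)^{|L\backslash Z|}$. The crucial combinatorial identity is that this coefficient equals $\mathbb{I}_{\{L\subseteq Y\}}$: the substitution $T=L\backslash Z$ turns it into $\sum_{T\subseteq L\backslash Y}(-1)^{|T|}$, which vanishes unless $L\backslash Y=\phi$. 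Since $Z\cup W$ ranges over all subsets of $N$ containing $W=N\backslash L$, while $Z\cup W'$ ranges over all subsets of $M$ containing $W'=M\backslash L$, this collapses the left-hand side of (\ref{eq3.5}) to $\sum_{L\subseteq Y\subseteq N}\lambda_Y^N(x_Y)$ and the right-hand side to $\sum_{L\subseteq Y\subseteq M}\lambda_Y^N(x_Y)$ (all evaluated at the fixed point $x$).

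Finally, subtracting the two simplified expressions shows that (\ref{eq3.5}) holds if and only if $\sum_{L\subseteq Y\subseteq N,\,Y\not\subseteq M}\lambda_Y^N(x_Y)=0$. Because $L\subseteq M$, any $Y$ with $L\subseteq Y\subseteq N$ and $Y\not\subseteq M$ automatically satisfies $L\subset Y$ and $Y\cap(N\backslash M)\neq\phi$, so this sum is exactly the left-hand side of (\ref{eq3.1}); this closes the chain of equivalences. I expect the only delicate point to be the interchange of summation together with the vanishing of the alternating coefficient $\sum_{T\subseteq L\backslash Y}(-1)^{|T|}$, since that is what forces every term with $L\not\subseteq Y$ to drop out; everything else is bookkeeping once both sides of (\ref{eq3.5}) are rewritten through (\ref{eq1.13}).
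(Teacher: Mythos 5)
Your argument is correct, and it reaches the paper's key intermediate identity by a noticeably cleaner route. Both proofs reduce the problem to showing that \eqref{eq3.5} is equivalent to \eqref{eq3.1} and then invoke Lemma \ref{lem3.1}, but the algebra is organized differently. The paper first isolates the $Z=L$ terms to rewrite \eqref{eq3.5} as \eqref{eq3.6}, expands $\log p_{N}(x_{N})-\nu_{M}^{N}(x_{M})$ via Lemma \ref{lem1.2} into $\sum_{Z\cap(N\backslash M)\neq\phi}\lambda_{Z}^{N}(x_{Z})$, and then disposes of the terms with $L\not\subseteq Z$ through the inclusion--exclusion computation \eqref{eq3.8} over which elements of $L$ are absent from $Z$. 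You instead expand \emph{every} $\nu$ on both sides of \eqref{eq3.5} via \eqref{eq1.13}, interchange the order of summation, and extract the coefficient $\sum_{Y\cap L\subseteq Z\subseteq L}(-1)^{|L\backslash Z|}=\mathbb{I}_{\{L\subseteq Y\}}$ of each $\lambda_{Y}^{N}$, so that the two sides collapse to $\sum_{L\subseteq Y\subseteq N}\lambda_{Y}^{N}(x_{Y})$ and $\sum_{L\subseteq Y\subseteq M}\lambda_{Y}^{N}(x_{Y})$ and their difference is immediately \eqref{eq3.1}; your observation that $L\subseteq M$ forces $L\subset Y$ for the surviving terms correctly matches the strict inclusion in Condition (ii) of Definition \ref{def3.1}. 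What your approach buys is a single uniform M\"{o}bius-type cancellation in place of the paper's two-stage manipulation \eqref{eq3.6}--\eqref{eq3.9}, at the modest cost of having to justify the interchange of finite sums and the vanishing of the alternating coefficient, both of which you flag and which are routine. No gaps.
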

The necessary and sufficient condition for strict collapsibility in Theorem \ref{th3.1} amounts to checking equality of two linear combinations of the averages $\nu_{Z\cup(N\backslash L)}^{N}$ and $\nu_{Z\cup(M\backslash L)}^{N}$ (see (\ref{eq1.3})) for all subsets $Z$ of $L$.
\begin{example}\label{ex2.3}
	Consider the 3-dimensional Table \ref{t3} discussed in Whittemore (1978), which cross-classifies variables $X_{1}$, $X_{2}$ and $X_{3}$ each having $3$ levels.
	\begin{table}[ht]
		\caption{$3\times 3\times 3$ Table}\label{t3}
		\begin{center}
			$
			\begin{array}{|c|c|c|c|c|}\hline
			& & X_{2} = 1 & X_{2} = 2  & X_{2}=3\\ \hline
			X_{3} = 1 & X_{1} = 1 & 125 & 40 & 75 \\   
			& X_{1} = 2 & 40 & 32  & 24 \\
			& X_{1} = 3 & 75 & 120  & 45 \\ \hline 
			X_{3} = 2 & X_{1} = 1 & 40 & 32 & 120 \\   
			& X_{1} = 2 & 32 & 64 & 96 \\ 
			& X_{1} = 3 & 24 & 96 & 72 \\ \hline 
			X_{3} = 3 & X_{1} = 1 & 75 & 24 & 45 \\   
			& X_{1} = 2 & 120 & 96 & 72 \\  
			& X_{1} = 3 & 45 & 72 & 27 \\ \hline 
			\end{array}
			$
		\end{center} 
	\end{table} 
	We have 
	\begin{align*}
		\nu_{\emptyset}^{123} - \nu_{1}^{123}(1) - \nu_{2}^{123}(1) + \nu_{12}^{123}(1,1) &= \nu_{3}^{123}(k) - \nu_{13}^{123}(1,k) - \nu_{23}^{123}(1,k) + \log p_{123}(1,1,k) = 0.2806 \\
		\nu_{\emptyset}^{123} - \nu_{1}^{123}(1) - \nu_{2}^{123}(2) + \nu_{12}^{123}(1,2) &= \nu_{3}^{123}(k) - \nu_{13}^{123}(1,k) - \nu_{23}^{123}(2,k) + \log p_{123}(1,2,k) = -0.5613; \\
		\nu_{\emptyset}^{123} - \nu_{1}^{123}(1) - \nu_{2}^{123}(3) + \nu_{12}^{123}(1,3) &= \nu_{3}^{123}(k) - \nu_{13}^{123}(1,k) - \nu_{23}^{123}(3,k) + \log p_{123}(1,3,k) = 0.2806; \\
		\nu_{\emptyset}^{123} - \nu_{1}^{123}(2) - \nu_{2}^{123}(1) + \nu_{12}^{123}(2,1) &= \nu_{3}^{123}(k) - \nu_{13}^{123}(2,k) - \nu_{23}^{123}(1,k) + \log p_{123}(2,1,k) = -0.0248; \\
		\nu_{\emptyset}^{123} - \nu_{1}^{123}(2) - \nu_{2}^{123}(2) + \nu_{12}^{123}(2,2) &= \nu_{3}^{123}(k) - \nu_{13}^{123}(2,k) - \nu_{23}^{123}(2,k) + \log p_{123}(2,2,k) = 0.0496; \\
		\nu_{\emptyset}^{123} - \nu_{1}^{123}(2) - \nu_{2}^{123}(3) + \nu_{12}^{123}(2,3) &= \nu_{3}^{123}(k) - \nu_{13}^{123}(2,k) - \nu_{23}^{123}(3,k) + \log p_{123}(2,3,k) = -0.0248; \\
		\nu_{\emptyset}^{123} - \nu_{1}^{123}(3) - \nu_{2}^{123}(1) + \nu_{12}^{123}(3,1) &= \nu_{3}^{123}(k) - \nu_{13}^{123}(3,k) - \nu_{23}^{123}(1,k) + \log p_{123}(3,1,k) = -0.2558; \\
		\nu_{\emptyset}^{123} - \nu_{1}^{123}(3) - \nu_{2}^{123}(2) + \nu_{12}^{123}(3,2) &= \nu_{3}^{123}(k) - \nu_{13}^{123}(3,k) - \nu_{23}^{123}(2,k) + \log p_{123}(3,2,k) = 0.5117; \\
		\nu_{\emptyset}^{123} - \nu_{1}^{123}(3) - \nu_{2}^{123}(3) + \nu_{12}^{123}(3,3) &= \nu_{3}^{123}(k) - \nu_{13}^{123}(3,k) - \nu_{23}^{123}(3,k) + \log p_{123}(3,3,k) = -0.2558,
	\end{align*}
	for $k\in\{1,2,3\}$, which maybe written as
	\begin{equation*}
		\nu_{\emptyset}^{123} - \nu_{1}^{123}(i) - \nu_{2}^{123}(j) + \nu_{12}^{123}(i,j) = \nu_{3}^{123}(k) - \nu_{13}^{123}(i,k) - \nu_{23}^{123}(j,k) + \log p_{123}(i,j,k)
	\end{equation*}
	for all $i,j,k\in\{1,2,3\}$. Similarly, it can be shown that
	\begin{align*}
		\nu_{\emptyset}^{123} - \nu_{1}^{123}(i) - \nu_{3}^{123}(k) + \nu_{13}^{123}(i,k) = \nu_{2}^{123}(j) - \nu_{12}^{123}(i,j) - \nu_{23}^{123}(j,k) + \log p_{123}(i,j,k); \\
		\nu_{\emptyset}^{123} - \nu_{2}^{123}(j) - \nu_{3}^{123}(k) + \nu_{23}^{123}(j,k) = \nu_{1}^{123}(i) - \nu_{12}^{123}(i,j) - \nu_{13}^{123}(i,k) + \log p_{123}(i,j,k).
	\end{align*}
	Note that each of the above equations satisfies Condition (\ref{eq3.5}) of Theorem \ref{th3.1}. Hence, Table \ref{t3} is strictly collapsible over $X_{1}$, $X_{2}$ and $X_{3}$ with respect to $\lambda_{23}^{123}$, $\lambda_{13}^{123}$ and $\lambda_{12}^{123}$ respectively into $3\times 3$ tables. Indeed, we can show $\hat{\lambda}_{12}^{12}(i,j)=\hat{\lambda}_{12}^{123}(i,j)$, $\hat{\lambda}_{13}^{13}(i,k)=\hat{\lambda}_{13}^{123}(i,k)$ and $\hat{\lambda}_{23}^{23}(j,k)=\hat{\lambda}_{23}^{123}(j,k)$ alongwith $\hat{\lambda}_{123}^{123}(i,j,k)=0$ for all $i,j,k\in\{1,2,3\}$.
\end{example}
Next, we provide necessary and sufficient conditions for strict collapsibility with respect to a collection of MLL parameters having a set of common effects.
\begin{theorem}\label{th3.2}
A table $\mathfrak{X}_{N}$ is strictly collapsible over $N\backslash M$ into a table $\mathfrak{X}_{M}$ with respect to the set $C_{S}= \{\lambda_{A}^{N}|S\subseteq A\subseteq L\}$ of interaction parameters if and only if
\begin{equation}\label{eq3.11}
\sum_{R\subseteq S}(-1)^{|S\backslash R|}{\tilde{d}}_{L_{R}}^{(M)}(x_{L_{R}})=0
\end{equation} 	
and 
\begin{equation}\label{eq3.12}
\sum_{Z\subseteq S}(-1)^{|S\backslash Z|}\nu_{N_{S}\cup Z}^{N}(x_{N_{S}},x_{Z}) = \sum_{Z\subseteq S}(-1)^{|S\backslash Z|}\nu_{M_{S}\cup Z}^{N}(x_{M_{S}},x_{Z}),
\end{equation}
where $M_{S}=M\backslash S$, $N_{S}=N\backslash S$ and $1\leq |S|\leq |M|$.	
\end{theorem}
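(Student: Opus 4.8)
The plan is to split strict collapsibility with respect to the set $C_{S}$ into its two defining requirements --- ordinary collapsibility together with a vanishing (strictness) condition --- and to dispatch each using results already established. Following Definition \ref{def3.1} applied to each member of $C_{S}$, the table $\mathfrak{X}_{N}$ is strictly collapsible with respect to $C_{S}$ exactly when, for every $A$ with $S\subseteq A\subseteq L$, it is collapsible with respect to $\lambda_{A}^{N}$ (so that $\lambda_{A}^{M}=\lambda_{A}^{N}$) and $\lambda_{Z}^{N}(x_{Z})=0$ for all $A\subset Z\not\subseteq M$. I would therefore treat the collapsibility clause and the strictness clause separately and then conjoin them.

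The collapsibility clause, namely $\lambda_{A}^{M}=\lambda_{A}^{N}$ for all $S\subseteq A\subseteq L$, is precisely collapsibility of $\mathfrak{X}_{N}$ with respect to the set $C_{S}$; Theorem \ref{th2.2} therefore gives its equivalence with (\ref{eq2.9}), which is verbatim condition (\ref{eq3.11}). No further work is needed for this half.

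The substance lies in the strictness clause. The key observation is that the union, over all $A$ with $S\subseteq A\subseteq L$, of the index sets $\{Z:A\subset Z\not\subseteq M\}$ collapses to the single set determined by the smallest common effect $S$: if $A\subset Z$ with $S\subseteq A$ then $S\subset Z$, while conversely any $Z$ with $S\subset Z\not\subseteq M$ is captured by taking $A=S$. Hence the aggregate strictness clause is equivalent to $\lambda_{Z}^{N}(x_{Z})=0$ for all $S\subset Z\subseteq N$ with $Z\cap(N\backslash M)\neq\phi$. Applying Lemma \ref{lem3.1} with $S$ in place of $L$ then recasts this whole family of constraints as the single summed identity $\sum_{S\subset Z\subseteq N,\,Z\cap(N\backslash M)\neq\phi}\lambda_{Z}^{N}(x_{Z})=0$.

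To finish, I would rerun the computation (\ref{eq3.7})--(\ref{eq3.9}) underlying Theorem \ref{th3.1} with $S$ substituted for $L$ throughout; this is legitimate because $S\subseteq M\subset N$ and $S\neq\phi$, and the computation is purely formal, relying only on the expansion $\log p_{N}=\sum_{Z\subseteq N}\lambda_{Z}^{N}$ and on Lemma \ref{lem3.1} rather than on any collapsibility hypothesis. Reorganizing the sum over the sets $Z$ with $S\not\subseteq Z$ exactly as there shows that the summed vanishing condition is equivalent to (\ref{eq3.12}). Conjoining the two clauses then yields the stated characterization. The one genuinely new ingredient is the set-theoretic reduction of the strictness index to $S$; everything else is a direct appeal to Theorem \ref{th2.2}, Lemma \ref{lem3.1}, and the computation behind Theorem \ref{th3.1}.
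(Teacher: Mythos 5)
Your proposal is correct and follows essentially the same route as the paper: Theorem \ref{th2.2} handles the collapsibility clause and yields (\ref{eq3.11}), the monotonicity of the strictness index sets reduces the aggregate condition to the one for $A=S$ (the paper phrases this as $C_{A_{1}}\subseteq C_{A_{2}}$ for $A_{1}\subset A_{2}$), and the equivalence chain from Theorem \ref{th3.1} with $S$ in place of $L$ converts that condition into (\ref{eq3.12}). Your remark that the computation behind Theorem \ref{th3.1} is purely formal and does not use the collapsibility hypothesis is a small extra point of care, but it does not change the argument.
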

From Theorem \ref{th3.2} observe that in addition to the colapsibility condition in Theorem \ref{th2.2}, we need to check equality of the linear combinations of the averages $\nu_{N_{S}\cup Z}^{N}$ and $\nu_{M_{S}\cup Z}^{N}$ for all subsets $Z$ of $S$ where $S\subseteq M$ is the set of common effects corresponding to the collection $C_{S}$ of MLL parameters.
\begin{example}\label{ex2.4}
	Consider Table \ref{t2} in Example \ref{ex2.2}. For  $j\in\{1,2\}$, we obtain
	\begin{align}\label{t4eq1}
		\log p_{123}(1,j,1) - \nu_{23}^{123}(j,1) &= \nu_{13}^{123}(1,1) - \nu_{3}^{123}(1) = -0.6931;\nonumber \\
		\log p_{123}(1,j,2) - \nu_{23}^{123}(j,2) &= \nu_{13}^{123}(1,2) - \nu_{3}^{123}(2) = 0;\nonumber \\
		\log p_{123}(1,j,3) - \nu_{23}^{123}(j,3) &= \nu_{13}^{123}(1,3) - \nu_{3}^{123}(3) = 0.6931; \nonumber \\
		\log p_{123}(2,j,1) - \nu_{23}^{123}(j,1) &= \nu_{13}^{123}(2,1) - \nu_{3}^{123}(1) = 0.6931; \nonumber \\
		\log p_{123}(2,j,2) - \nu_{23}^{123}(j,2) &= \nu_{13}^{123}(2,2) - \nu_{3}^{123}(2) = 0; \nonumber\\
		\log p_{123}(2,j,3) - \nu_{23}^{123}(j,3) &= \nu_{13}^{123}(2,3) - \nu_{3}^{123}(3) = -0.6931
	\end{align}
	and for $i\in\{1,2\}$, we have
	\begin{align}\label{t4eq2}
		\log p_{123}(i,1,1) - \nu_{13}^{123}(i,1) &= \nu_{23}^{123}(1,1) - \nu_{3}^{123}(1) = -0.6931; \nonumber \\
		\log p_{123}(i,1,2) - \nu_{13}^{123}(i,2) &= \nu_{23}^{123}(1,2) - \nu_{3}^{123}(2) = -0.9962; \nonumber \\
		\log p_{123}(i,1,3) - \nu_{13}^{123}(i,3) &= \nu_{23}^{123}(1,3) - \nu_{3}^{123}(3) = -1.5890 \nonumber \\
		\log p_{123}(i,2,1) - \nu_{13}^{123}(i,1) &= \nu_{23}^{123}(2,1) - \nu_{3}^{123}(1) = 0.6931; 
		\nonumber \\
		\log p_{123}(i,2,2) - \nu_{13}^{123}(i,2) &= \nu_{23}^{123}(2,2) - \nu_{3}^{123}(2) = 0.9962;
		\nonumber \\ 
		\log p_{123}(i,2,3) - \nu_{13}^{123}(i,3) &= \nu_{23}^{123}(2,3) - \nu_{3}^{123}(3) = 1.5890,
	\end{align}
	which satisfy Condition (\ref{eq3.12}) of Theorem \ref{th3.2}. As shown in Example \ref{ex2.2}, Table \ref{t2} also satisfies Condition (\ref{eq3.11}) of Theorem \ref{th3.2}. Hence, Table \ref{t2} is strictly collapsible over $X_{2}$ into a $2\times 3$ table with respect to $\lambda_{1}^{123}$ and $\lambda_{13}^{123}$ using (\ref{t4eq1}). It is also strictly collapsible over $X_{1}$ into a $2\times 3$ table with respect to $\lambda_{2}^{123}$ and $\lambda_{23}^{123}$ using (\ref{t4eq2}). This coincides with our observation in Example \ref{ex2.2} that $\hat{\lambda}_{1}^{13}(i)=\hat{\lambda}_{1}^{123}(i)$, $\hat{\lambda}_{13}^{13}(i,k)=\hat{\lambda}_{13}^{123}(i,k)$, $\hat{\lambda}_{2}^{23}(j)=\hat{\lambda}_{2}^{123}(j)$ and $\hat{\lambda}_{23}^{23}(j,k)=\hat{\lambda}_{23}^{123}(j,k)$ for $i,j\in\{1,2\}$ and $k\in\{1,2,3\}$. Indeed, it can also be shown that $\hat{\lambda}_{123}^{123}(i,j,k)=0$ for $i,j,k\in\{1,2,3\}$. 
\end{example}

\section{Strict Collapsibility and Independence}
In this section, we study the relationship between strict collapsibility and various forms of independence -- conditional, joint and mutual among variables in a multidimensional table. Suppose $A$, $B$ and $C$ form a partition of $M$ (a finite index set). We have 
\begin{enumerate}
\item[(i)] $X_{A}\ci X_{B}|X_{C}$ (conditional independence) if and only if
\begin{align*}
p_{M}(x_{M}) &= \frac{p_{AC}(x_{AC})p_{BC}(x_{BC})}{p_{C}(x_{C})}, 
\end{align*}
\item[(ii)] $X_{A}\ci (X_{B},X_{C})$ (joint independence) if and only if
\begin{align*}
p_{M}(x_{M}) &=p_{A}(x_{A})p_{BC}(x_{BC}),
\end{align*}
\item[(iii)] $X_{A}\ci X_{B}\ci X_{C}$ (mutual independence) if and only if 
\begin{align*}
p_{M}(x_{M}) &= p_{A}(x_{A})p_{B}(x_{B})p_{C}(x_{C}) .
\end{align*}
\end{enumerate}
Simpson (1951) had proved necessity and sufficiency of conditional independence for collapsibility in a $2\times 2\times 2$ table. For an arbitrary 3-dimensional table, Theorem 2.4-1 of Bishop, Fienberg and Holland (1975) states that conditional independence is a necessary and sufficient condition for collapsibility of ordinary log-linear parameters, while Theorem 2.5-1 states the same result for a multidimensional table. However, Whittemore (1978) showed through counterexamples that the above theorems are not true (non-necessity of conditional independence) for collapsibility in such tables. It was also shown that vanishing of the 3-factor interaction is neither necessary nor sufficient for collapsibility in a 3-dimensional table. 

We provide a result below stating necessary and sufficient conditions for strict collapsibility of MLL parameters (in a multidimensional table) in terms of conditional independence.
\begin{theorem}\label{th4.1}
A $|M|$-dimensional table $\mathfrak{X}_{M}$ is strictly collapsible over $\mathfrak{X}_{A}$ ($\mathfrak{X}_{B}$) into $\mathfrak{X}_{BC}$ ($\mathfrak{X}_{AC}$) with respect to $\lambda_{B'}^{ABC}$ and $\lambda_{B'C'}^{ABC}$ ($\lambda_{A'}^{ABC}$ and $\lambda_{A'C'}^{ABC}$) if and only if $X_{A}\ci X_{B}|X_{C}$, where $A'\subseteq A$, $B'\subseteq B$, $C'\subseteq C$ and $\{A,B, C\}$ is a partition of $M$. 
\end{theorem}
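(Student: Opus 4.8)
The plan is to unwind Definition \ref{def3.1} for each effect in the two families $\{\lambda_{B'}^{ABC}\}$ and $\{\lambda_{B'C'}^{ABC}\}$. By the standing non-emptiness convention these effects $L=B'$ and $L=B'C'$ range exactly over the subsets $L\subseteq BC$ with $L\cap B\neq\phi$, so strict collapsibility over $\mathfrak{X}_A$ into $\mathfrak{X}_{BC}$ amounts to requiring, for every such $L$, both (i) $\lambda_L^{BC}(x_L)=\lambda_L^{ABC}(x_L)$ and (ii) $\lambda_Z^{ABC}(x_Z)=0$ for all $L\subset Z\subseteq ABC$ with $Z\cap A\neq\phi$. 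The parenthetical assertion (collapsing over $\mathfrak{X}_B$ into $\mathfrak{X}_{AC}$) is identical after interchanging $A$ and $B$, using $X_A\ci X_B|X_C\Leftrightarrow X_B\ci X_A|X_C$, so I would establish only the first.

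For the forward implication I would start from $X_A\ci X_B|X_C$, which factorizes as $p_{ABC}(x_{ABC})=p_{A|C}(x_A|x_C)\,p_{BC}(x_{BC})$, hence $\log p_{ABC}=\log p_{A|C}+\log p_{BC}$. Substituting this into the expression (\ref{eq1.4}) for $\lambda_L^{ABC}$ with $L\subseteq BC$ and splitting the sum according to the two logarithms, the $\log p_{BC}$ part reconstructs $\lambda_L^{BC}(x_L)$ (the sum over the $A$-coordinates produces $|\mathfrak{X}_A|$, which cancels against the factor $|\mathfrak{X}_{ABC\backslash L'}|=|\mathfrak{X}_A||\mathfrak{X}_{BC\backslash L'}|$). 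For the $\log p_{A|C}$ part I would write $L'=(L'\cap B)\sqcup(L'\cap C)$; since $\log p_{A|C}$ is free of the $B$-coordinates, the associated $\nu$-term depends on $L'$ only through $L'\cap C$, so the alternating sum factors as a sum over subsets of $L\cap B$ times a sum over subsets of $L\cap C$. The first factor equals $(1-1)^{|L\cap B|}=0$ precisely because $L\cap B\neq\phi$, so this contribution vanishes and (i) follows. Condition (ii) is then immediate from Lemma \ref{lem1.3}: if $L\subset Z$ with $L\cap B\neq\phi$ then $Z\cap B\neq\phi$, and together with $Z\cap A\neq\phi$ this forces $\lambda_Z^{ABC}=0$.

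For the converse I would use condition (ii) alone. Given any $Z\subseteq ABC$ with $Z\cap A\neq\phi$ and $Z\cap B\neq\phi$, set $L=Z\cap BC$. Then $L\subseteq BC$ and $L\cap B=Z\cap B\neq\phi$, so $L$ is an admissible effect, and $L\subset Z$ properly since $Z\cap A\neq\phi$. Applying (ii) with this $L$ to $Z$ itself yields $\lambda_Z^{ABC}(x_Z)=0$. As $Z$ was an arbitrary subset meeting both $A$ and $B$, Lemma \ref{lem1.3} delivers $X_A\ci X_B|X_C$.

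I expect the main obstacle to be the bookkeeping in step (i): correctly separating the $B$- and $C$-parts of each $L'$ inside the M\"{o}bius-type sum (\ref{eq1.4}) and checking that the $\log p_{A|C}$ contribution factorizes so that the $L\cap B$ factor is an empty-difference alternating sum. This is essentially the mechanism behind Lemma \ref{lem1.4}; indeed an alternative route would invoke Lemma \ref{lem1.4} directly, applied to the conditional independence $X_{B'}\ci X_A|X_{(B\backslash B')C}$ (which follows from $X_A\ci X_B|X_C$ by weak union), giving $\lambda_{B'C'}^{ABC}=\lambda_{B'C'}^{BC}$ at once, although that shortcut relies on the graphoid weak-union property that the self-contained computation sidesteps.
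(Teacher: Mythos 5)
Your proof is correct, but the sufficiency direction follows a genuinely different route from the paper's. The paper expands $\log p_{ABC}$ in its MLL parameters, kills the $\lambda_{A'B'}^{ABC}$ and $\lambda_{A'B'C'}^{ABC}$ terms via Lemma \ref{lem1.3}, then computes $\log p_{BC}$ by summing exponentials (introducing correction terms such as $\lambda'(x_{C})$), compares this with the log-linear expansion of the collapsed table, and peels off the individual parameter identities by repeated marginalization using Lemmas \ref{lem1.1} and \ref{lem1.1a}. You instead substitute the factorization $\log p_{ABC}=\log p_{A|C}+\log p_{BC}$ directly into the M\"{o}bius formula (\ref{eq1.4}) for $\lambda_{L}^{ABC}$ with $L\subseteq BC$, $L\cap B\neq\phi$: the $\log p_{BC}$ part reproduces $\lambda_{L}^{BC}$ exactly, and the $\log p_{A|C}$ part dies because it depends on $L'$ only through $L'\cap C$, so the alternating sum over subsets of $L\cap B$ contributes the factor $(1-1)^{|L\cap B|}=0$. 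This is precisely the mechanism of the paper's own Lemma \ref{lem1.4} (with the roles of $A$ and $B$ interchanged and weak union supplying the needed conditional independences), as you note; your argument is shorter, avoids the exponential-sum correction terms entirely, and establishes condition (i) of Definition \ref{def3.1} for all admissible effects in one stroke. The necessity direction is essentially the paper's: you just make explicit why condition (ii), quantified over the admissible effects $L=Z\cap BC$, forces $\lambda_{Z}^{ABC}=0$ for every $Z$ meeting both $A$ and $B$, whence Lemma \ref{lem1.3} applies. One small point worth stating explicitly in a write-up: Lemma \ref{lem1.3} must be read with $A'$ and $B'$ nonempty but $C'$ possibly empty (as in the paper's display (\ref{eq4.2})); your identification of the vanishing set $\{Z: Z\cap A\neq\phi,\ Z\cap B\neq\phi\}$ is consistent with that reading.
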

\begin{remark}\label{rem4.2}
Conditional indpendence is always sufficient for strict collapsibility in a $|M|$-dimensional table. However, it is not necessary for the particular case when $|A|=|B|=|C|=1$ (a 3-dimensional table) and $\mathfrak{X}_{M}$ is strictly collapsible with respect to two-way interaction parameters only ($\lambda_{A'C'}^{ABC}$ or $\lambda_{B'C'}^{ABC}$). Then $\lambda_{A'B'C'}^{ABC}= 0$, while $\lambda_{A'B'}^{ABC}$ is non-zero implying $X_{A}\not\ci X_{B}|X_{C}$. This observation is consistent with Theorem 1 of Whittemore (1978), which states the existence of arbitrary 3-dimensional tables that are strictly collapsible over each variable such that no two-way log-linear interaction vanishes.
\end{remark}

\begin{corollary}\label{cor4.1}	 
A 3-dimensional table $\mathfrak{X}_{pqr}$ is strictly collapsible over $\mathfrak{X}_{p}$ ($\mathfrak{X}_{q}$) into a 2-dimensional table $\mathfrak{X}_{qr}$ ($\mathfrak{X}_{pr}$) with respect to $\lambda_{q}^{123}$ and $\lambda_{qr}^{123}$ ($\lambda_{p}^{123}$ and $\lambda_{pr}^{123}$) if and only if $X_{p}\ci X_{q}|X_{r}$ where $p\neq q\neq r\in\{1,2,3\}$.
\end{corollary}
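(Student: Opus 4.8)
The plan is to obtain this corollary as the immediate specialization of Theorem \ref{th4.1} to the case where the three blocks of the partition are singletons. Concretely, I would set $A=\{p\}$, $B=\{q\}$ and $C=\{r\}$. Since $p,q,r$ are distinct elements of $\{1,2,3\}$, these three singletons partition $M=\{1,2,3\}=\{p,q,r\}$, so the hypotheses of Theorem \ref{th4.1} are satisfied for this choice of $A,B,C$, and nothing further needs to be established beyond a translation of notation.

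Next I would match the MLL parameter families appearing in Theorem \ref{th4.1} with the two parameters named in the corollary. Because $B=\{q\}$ and $C=\{r\}$ are singletons, the only nonempty subsets are $B'=\{q\}$ and $C'=\{r\}$ themselves. Consequently the family $\{\lambda_{B'}^{ABC}\}$ has $\lambda_{q}^{123}$ as its unique nontrivial representative, and the family $\{\lambda_{B'C'}^{ABC}\}$ has $\lambda_{qr}^{123}$ as its unique genuine two-factor representative (the remaining indices yield only the grand mean $\lambda_{\phi}^{123}$ and the lower-order term $\lambda_{r}^{123}$). Thus the phrase ``strictly collapsible over $\mathfrak{X}_{A}$ into $\mathfrak{X}_{BC}$ with respect to $\lambda_{B'}^{ABC}$ and $\lambda_{B'C'}^{ABC}$'' reads exactly as ``strictly collapsible over $\mathfrak{X}_{p}$ into $\mathfrak{X}_{qr}$ with respect to $\lambda_{q}^{123}$ and $\lambda_{qr}^{123}$''. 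Applying the identical reduction to the parenthetical half of Theorem \ref{th4.1} (collapsing over $\mathfrak{X}_{B}$ into $\mathfrak{X}_{AC}$ with respect to $\lambda_{A'}^{ABC}$ and $\lambda_{A'C'}^{ABC}$) turns it into collapsing over $\mathfrak{X}_{q}$ into $\mathfrak{X}_{pr}$ with respect to $\lambda_{p}^{123}$ and $\lambda_{pr}^{123}$, again coming from the same single partition.

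Finally, since under this identification the condition $X_{A}\ci X_{B}|X_{C}$ is literally $X_{p}\ci X_{q}|X_{r}$, Theorem \ref{th4.1} supplies the claimed equivalence in both the $\mathfrak{X}_{p}$ and the $\mathfrak{X}_{q}$ cases simultaneously, which completes the proof. I do not expect any genuine mathematical obstacle: the whole content is already carried by Theorem \ref{th4.1}, and the only step requiring care is the routine bookkeeping verification that, for singleton $B$ and $C$, the two parameter families degenerate precisely to the named parameters $\lambda_{q}^{123},\lambda_{qr}^{123}$ (respectively $\lambda_{p}^{123},\lambda_{pr}^{123}$) and that the conditional-independence statement transcribes correctly.
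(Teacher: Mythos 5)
Your proposal is correct and matches the paper's (implicit) treatment: the corollary is stated as an immediate specialization of Theorem \ref{th4.1} with $A=\{p\}$, $B=\{q\}$, $C=\{r\}$, and the only work is the notational bookkeeping you describe. Nothing further is needed.
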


\begin{example}\label{ex3.1}
	Consider the $2\times 2\times 2$ Table \ref{t4}, which deals with the job satisfaction data of 715 blue collar workers  from a large scale investigation into the Danish industry in 1968 (see Andersen (1990)). The three variables are supervisor satisfaction $X_{1}$ having 2 levels (low and high), worker job satisfaction ($X_{2}$) having 2 levels (low and high) and quality of management ($X_{3}$) having $2$ levels (bad and good).
	\begin{table}[ht]
		\caption{$2\times 2\times 2$ Table}\label{t4}
		\begin{center}
			$
			\begin{array}{|c|c|c|c|}\hline
			& & X_{2} = 1 & X_{2} = 2 \\ \hline
			X_{3} = 1 & X_{1} = 1 & 103 & 87 \\   
			& X_{1} = 2 & 32  & 42 \\ \hline 
			X_{3} = 2 & X_{1} = 1 & 59 & 109 \\   
			& X_{1} = 2 & 78 & 205 \\ \hline 
			\end{array}
			$
		\end{center} 
	\end{table} 
	For testing the goodness of fit of various alternative models against the null saturated model, we fit several hierarchical log-linear marginal models to Table \ref{t4} including models of conditional independence, of joint independence and of mutual independence alongwith the no three-factor interaction model. A log-linear marginal model satisfying (\ref{eq1.11a}) in Lemma \ref{lem1.11} is said to be hierarchical if $\lambda_{L'}^{M}=0\Rightarrow\lambda_{L}^{M}=0$ for $L'\subset L$. The models with adequate fit for Table \ref{t4} are the conditional independence model $X_{1}\ci X_{2}|X_{3}$ (given by $\lambda_{12}^{123}=\lambda_{123}^{123}=0$) and the no three-factor interaction model (given by $\lambda_{123}^{123}=0$) based on $p$-values of $0.0676$ and $0.7988$ respectively. However, the most parsimonious model is the conditional independence model $X_{1}\ci X_{2}|X_{3}$, that is, supervisor satisfaction does not depend on the quality of management given worker satisfaction. By Theorem \ref{th4.1}, Table \ref{t4} should be strictly collapsible over $X_{1}$ ($X_{2}$) with respect to $\lambda_{2}^{123}$ and $\lambda_{23}^{123}$ ($\lambda_{1}^{123}$ and $\lambda_{13}^{123}$). We verify this next. Table \ref{t5} shows the table of expected values obtained under the model $X_{1}\ci X_{2}|X_{3}$.
	\begin{table}[ht]
		\caption{$2\times 2\times 2$ Table}\label{t5}
		\begin{center}
			$
			\begin{array}{|c|c|c|c|}\hline
			& & X_{2} = 1 & X_{2} = 2 \\ \hline
			X_{3} = 1 & X_{1} = 1 & 97.1591 & 92.8409 \\   
			& X_{1} = 2 & 37.8409  & 36.1591 \\ \hline 
			X_{3} = 2 & X_{1} = 1 & 51.0333 & 116.9667 \\   
			& X_{1} = 2 & 85.9667 & 197.0333 \\ \hline 
			\end{array}
			$
		\end{center} 
	\end{table} 
	From Table \ref{t5}, we have for $i\in\{1,2\}$ 
	\begin{equation}\label{t5eq1}
		\tilde{d}_{3}^{(13)}(1) = \tilde{d}_{13}^{(13)}(i,1) = 0.6934;~ \tilde{d}_{3}^{(13)}(2) = \tilde{d}_{13}^{(13)}(i,2) = 0.7768; 
	\end{equation}
	and for $j\in\{1,2\}$ 
	\begin{equation}\label{t5eq2}
		\tilde{d}_{3}^{(23)}(1) = \tilde{d}_{23}^{(23)}(j,1) = 0.8004;~ \tilde{d}_{3}^{(23)}(2) = \tilde{d}_{13}^{(23)}(j,2) = 0.7268, 
	\end{equation}
	which satisfy Condition (\ref{eq3.11}) of Theorem \ref{th3.2}. Again from Table \ref{t5}, we obtain for $j\in\{1,2\}$
	\begin{align}\label{t5eq3}
		\log p_{123}(1,j,1) - \nu_{23}^{123}(j,1) &= \nu_{13}^{123}(1,1) - \nu_{3}^{123}(1) = 0.4715;\nonumber \\
		\log p_{123}(1,j,2) - \nu_{23}^{123}(j,2) &= \nu_{13}^{123}(1,2) - \nu_{3}^{123}(2) = -0.2607;\nonumber \\
		\log p_{123}(2,j,1) - \nu_{23}^{123}(j,1) &= \nu_{13}^{123}(2,1) - \nu_{3}^{123}(1) = -0.4715; \nonumber \\
		\log p_{123}(2,j,2) - \nu_{23}^{123}(j,2) &= \nu_{13}^{123}(2,2) - \nu_{3}^{123}(2) = 0.2607
	\end{align}
	and for $i\in\{1,2\}$
	\begin{align}\label{t5eq4}
		\log p_{123}(i,1,1) - \nu_{13}^{123}(i,1) &= \nu_{23}^{123}(1,1) - \nu_{3}^{123}(1) = 0.0227; \nonumber \\
		\log p_{123}(i,1,2) - \nu_{13}^{123}(i,2) &= \nu_{23}^{123}(1,2) - \nu_{3}^{123}(2) = -0.4147; \nonumber \\
		\log p_{123}(i,2,1) - \nu_{13}^{123}(i,1) &= \nu_{23}^{123}(2,1) - \nu_{3}^{123}(1) = -0.0227; 
		\nonumber \\
		\log p_{123}(i,2,2) - \nu_{13}^{123}(i,2) &= \nu_{23}^{123}(2,2) - \nu_{3}^{123}(2) = 0.4147,
	\end{align}
	which satisfy Condition (\ref{eq3.12}) of Theorem \ref{th3.2}. Hence, Table \ref{t5} is strictly collapsible over $X_{2}$ into a $2\times 3$ table with respect to $\lambda_{1}^{123}$ and $\lambda_{13}^{123}$ using (\ref{t5eq1}) and (\ref{t5eq3}). It is also strictly collapsible over $X_{1}$ into a $2\times 3$ table with respect to $\lambda_{2}^{123}$ and $\lambda_{23}^{123}$ using (\ref{t5eq2}) and (\ref{t5eq4}). Indeed, it can be shown $\hat{\lambda}_{1}^{13}(1) = \hat{\lambda}_{1}^{123}(1) = 0.1054$, $\hat{\lambda}_{1}^{13}(2) = \hat{\lambda}_{1}^{123}(2) = -0.1054$, $\hat{\lambda}_{13}^{13}(1,1) = \hat{\lambda}_{13}^{123}(1,1) = \hat{\lambda}_{13}^{13}(2,2) = \hat{\lambda}_{13}^{123}(2,2) = 0.3661$ and $\hat{\lambda}_{13}^{13}(1,2) = \hat{\lambda}_{13}^{123}(1,2) = \hat{\lambda}_{13}^{13}(2,1) = \hat{\lambda}_{13}^{123}(2,1) = -0.3661$ alongwith $\hat{\lambda}_{123}^{123}(i,j,k)=0$ for $i,j,k\in\{1,2\}$. Also, $\hat{\lambda}_{2}^{23}(1) = \hat{\lambda}_{2}^{123}(1) = -0.1960$, $\hat{\lambda}_{2}^{23}(2) = \hat{\lambda}_{2}^{123}(2) = 0.1960$, $\hat{\lambda}_{23}^{23}(1,1) = \hat{\lambda}_{23}^{123}(1,1) = \hat{\lambda}_{23}^{23}(2,2) = \hat{\lambda}_{23}^{123}(2,2) = 0.2187$ and $\hat{\lambda}_{23}^{23}(1,2) = \hat{\lambda}_{23}^{123}(1,2) = \hat{\lambda}_{23}^{23}(2,1) = \hat{\lambda}_{23}^{123}(2,1) = -0.2187$ alongwith $\hat{\lambda}_{123}^{123}(i,j,k)=0$ for $i,j,k\in\{1,2\}$.
\end{example}

The next result shows the relationship between strict collapsibility of MLL parameters and joint independence in a multidimensional table.	
\begin{theorem}\label{th4.2}
A $|M|$-dimensional table $\mathfrak{X}_{M}$ is strictly collapsible with respect to $\lambda_{B'}^{ABC}$, $\lambda_{C'}^{ABC}$ and $\lambda_{B'C'}^{ABC}$ by collapsing over $\mathfrak{X}_{A}$ into $\mathfrak{X}_{BC}$ if and only if $X_{A}\ci (X_{B},X_{C})$, where $A'\subseteq A$, $B'\subseteq B$, $C'\subseteq C$ and $\{A,B,C\}$ is a partition of $M$.
\end{theorem}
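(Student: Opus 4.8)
The plan is to mirror the proof of Theorem~\ref{th4.1}, using the elementary observation that joint independence $X_A \ci (X_B, X_C)$ is exactly the marginal independence $X_A \ci X_{BC}$, so that Lemma~\ref{lem1.3} becomes available with the two blocks $A$ and $BC$ and empty conditioning set. The two facts I would lean on are, first, the factorization $\log p_{ABC}(x_{ABC}) = \log p_A(x_A) + \log p_{BC}(x_{BC})$, which is equivalent to joint independence, and second, the characterization from Lemma~\ref{lem1.3} that $X_A \ci X_{BC}$ holds if and only if every cross interaction $\lambda_{A'W}^{ABC}$ vanishes for $\phi \neq A' \subseteq A$ and $\phi \neq W \subseteq BC$.

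For \emph{sufficiency}, assume $X_A \ci (X_B, X_C)$. Condition (ii) of Definition~\ref{def3.1} is immediate: for any of the effects $L \in \{B', C', B'C'\}$, which are nonempty subsets of $BC$, and any $Z$ with $L \subset Z \not\subseteq BC$, the set $Z$ meets $A$ (since $Z \not\subseteq BC$) and meets $BC$ (since $Z \supseteq L \neq \phi$), whence $\lambda_Z^{ABC} = 0$ by Lemma~\ref{lem1.3}. For condition (i) I would substitute the factorization into \eqref{eq1.4}: for $L' \subseteq BC$ the averaging operator splits as $\nu_{L'}^{ABC}(x_{L'}) = c_A + \nu_{L'}^{BC}(x_{L'})$, where $c_A = |\mathfrak{X}_A|^{-1}\sum_{j_A}\log p_A(j_A)$ is constant in both $x_{L'}$ and $L'$ (the factor $|\mathfrak{X}_A|$ cancels because $ABC \backslash L' = A \cup (BC \backslash L')$). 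Forming the alternating sum over $L' \subseteq L$ annihilates $c_A$, since $\sum_{L' \subseteq L}(-1)^{|L\backslash L'|} = 0$ for $L \neq \phi$, and leaves $\lambda_L^{ABC}(x_L) = \lambda_L^{BC}(x_L)$ for every $\phi \neq L \subseteq BC$; in particular for $L = B', C', B'C'$. This is condition (i), so strict collapsibility over $\mathfrak{X}_A$ follows from Definition~\ref{def3.1}.

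For \emph{necessity}, suppose the table is strictly collapsible over $\mathfrak{X}_A$ with respect to $\lambda_{B'}^{ABC}$, $\lambda_{C'}^{ABC}$ and $\lambda_{B'C'}^{ABC}$; here only condition (ii) of Definition~\ref{def3.1} is needed. Applying it with $L = B'$ gives $\lambda_Z^{ABC} = 0$ for every $Z \supsetneq B'$ with $Z \cap A \neq \phi$; writing $Z = A'B''C''$, this forces $\lambda_{A'B''C''}^{ABC} = 0$ whenever $A' \neq \phi$ and $B'' \neq \phi$. By symmetry, $L = C'$ forces $\lambda_{A'B''C''}^{ABC} = 0$ whenever $A' \neq \phi$ and $C'' \neq \phi$. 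Together these cover precisely the cross interactions $\lambda_{A'W}^{ABC}$ with $\phi \neq A' \subseteq A$ and $\phi \neq W \subseteq BC$, and Lemma~\ref{lem1.3} then yields $X_A \ci X_{BC}$, i.e.\ $X_A \ci (X_B, X_C)$.

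The only delicate point is the bookkeeping in the necessity direction: I must check that the prescribed effect families $B'$ and $C'$ generate, through the superset condition $L \subset Z \not\subseteq BC$, exactly the collection of $A$-$BC$ cross terms named in Lemma~\ref{lem1.3}---neither omitting a cross interaction nor spuriously forcing a pure-$A$ effect $\lambda_{A'}^{ABC}$ to vanish (these must remain free under joint independence). One also notes that the $B'C'$ conditions are subsumed by those for $B'$ and $C'$, so they play no role in this direction. Everything else reduces to the cancellation identity $\sum_{L'\subseteq L}(-1)^{|L\backslash L'|}=0$ and a direct appeal to Lemma~\ref{lem1.3}.
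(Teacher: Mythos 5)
Your proof is correct, and the sufficiency half takes a genuinely different and more economical route than the paper's. For condition (i) of Definition \ref{def3.1}, the paper expands the log-linear marginal models for $\log p_{BC}$ in the full and collapsed tables (equations (\ref{eq4.21}) and (\ref{eq4.22})), matches the two expansions by summing over $x_{B}$ and $x_{C}$ in turn, and then invokes Lemma \ref{lem1.1a} repeatedly to peel off $\lambda_{B'}$, $\lambda_{C'}$ and $\lambda_{B'C'}$ separately. You instead substitute the factorization $\log p_{ABC}=\log p_{A}+\log p_{BC}$ directly into (\ref{eq1.4}), note that $\nu_{L'}^{ABC}=c_{A}+\nu_{L'}^{BC}$ with $c_{A}$ independent of both $L'$ and $x_{L'}$, and annihilate $c_{A}$ with $\sum_{L'\subseteq L}(-1)^{|L\backslash L'|}=0$; this yields the stronger conclusion $\lambda_{L}^{ABC}=\lambda_{L}^{BC}$ for \emph{every} nonempty $L\subseteq BC$ in one stroke, and makes transparent why collapsibility holds (the $A$-marginal contributes only an additive constant to each $\nu$). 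This is in effect the same M\"{o}bius-sum technique the paper uses to prove Lemma \ref{lem1.4}, and indeed condition (i) could alternatively be obtained by citing that lemma with the roles $A\mapsto L$, $B\mapsto A$, $C\mapsto BC\backslash L$, $D\mapsto\phi$, since $X_{A}\ci X_{BC}$ gives $X_{L}\ci X_{A}\,|\,X_{BC\backslash L}$ by weak union. Your condition (ii) and the necessity direction coincide with the paper's---both read the vanishing of the cross interactions off Definition \ref{def3.1} and appeal to (\ref{eq4.17})/Lemma \ref{lem1.3}---but you are more explicit about the bookkeeping the paper leaves implicit: that letting $B'$ and $C'$ range over all nonempty subsets generates exactly the cross terms $\lambda_{A'W}^{ABC}$ with $\phi\neq A'\subseteq A$ and $\phi\neq W\subseteq BC$, while leaving the pure-$A$ effects unconstrained, and that the conditions coming from the effects $B'C'$ are redundant. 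The one point worth stating explicitly in your write-up is that the passage from ``$\lambda_{Z}^{ABC}=0$ for $Z\supsetneq B'$'' to ``$B''\neq\phi$ arbitrary'' uses the fact that the hypothesis ranges over all nonempty $B'\subseteq B$ (take $B'=Z\cap B$, which is a proper subset of $Z$ because $Z$ meets $A$); as written this is implicit but easily supplied.
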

\begin{remark}\label{rem4.21}
Joint independence is always sufficient for strict collapsibility in a $|M|$-dimensional table. However, it is not necessary for the specific case when $|A|=|B|=|C|=1$ (a 3-dimensional table) and $\mathfrak{X}_{M}$ is strictly collapsible over $\mathfrak{X}_{A}$ with respect to $\lambda_{B'C'}^{ABC}$ only. Then we have $\lambda_{A'B'C'}^{ABC}=0$ while $\lambda_{A'B'}^{ABC}$ and $\lambda_{A'C'}^{ABC}$ are non-zero implying $X_{A}\not\ci(X_{B},X_{C})$.
\end{remark}

\begin{corollary}\label{cor4.2}
A 3-dimensional table $\mathfrak{X}_{pqr}$, where $p\neq q\neq r\in\{1,2,3\}$, is strictly collapsible with respect to $\lambda_{q}^{123}$, $\lambda_{r}^{123}$ and $\lambda_{qr}^{123}$ by collapsing over $\mathfrak{X}_{p}$ into $\mathfrak{X}_{qr}$
if and only if $X_{p}\ci (X_{q},X_{r})$. Similarly, $\mathfrak{X}_{pqr}$ is strictly collapsible with respect to $\lambda_{p}^{123}$ by collapsing over $\mathfrak{X}_{q}$ ($\mathfrak{X}_{r}$) into $\mathfrak{X}_{pr}$ ($\mathfrak{X}_{pq}$) if and only if $X_{p}\ci (X_{q},X_{r})$. 
\end{corollary}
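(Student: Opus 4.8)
The plan is to derive both statements as the single-element specialization of Theorem \ref{th4.2} together with Remark \ref{rem4.21}(i). Throughout, I would set $A=\{p\}$, $B=\{q\}$ and $C=\{r\}$, which form a partition of $M=\{1,2,3\}=\{p,q,r\}$, so that $ABC=123$ and in particular $\lambda_{B}^{ABC}=\lambda_{q}^{123}$, $\lambda_{C}^{ABC}=\lambda_{r}^{123}$ and $\lambda_{BC}^{ABC}=\lambda_{qr}^{123}$.

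For the first statement I would invoke Theorem \ref{th4.2} verbatim with this choice of $A$, $B$, $C$. The only point requiring attention is the reduction of the parameter families appearing there to single parameters: since $B=\{q\}$ and $C=\{r\}$ are singletons, the only nonempty subsets are $B'=\{q\}$ and $C'=\{r\}$ themselves, so the collections of parameters $\lambda_{B'}^{ABC}$, $\lambda_{C'}^{ABC}$ and $\lambda_{B'C'}^{ABC}$ over nonempty $B',C'$ collapse to the three parameters $\lambda_{q}^{123}$, $\lambda_{r}^{123}$ and $\lambda_{qr}^{123}$, respectively. Theorem \ref{th4.2} then asserts that strict collapsibility of $\mathfrak{X}_{pqr}$ over $\mathfrak{X}_{p}$ into $\mathfrak{X}_{qr}$ with respect to these parameters holds if and only if $X_{A}\ci(X_{B},X_{C})$, that is, $X_{p}\ci(X_{q},X_{r})$.

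For the second statement I would appeal to Remark \ref{rem4.21}(i), which records that the same necessary and sufficient condition $X_{A}\ci(X_{B},X_{C})$ governs strict collapsibility with respect to $\lambda_{A'}^{ABC}$ when one instead collapses over $\mathfrak{X}_{B}$ (into $\mathfrak{X}_{AC}$) or over $\mathfrak{X}_{C}$ (into $\mathfrak{X}_{AB}$). With $A=\{p\}$ the single nonempty subset is $A'=\{p\}$, so the relevant parameter is $\lambda_{p}^{123}$, and collapsing over $\mathfrak{X}_{q}$ (resp. $\mathfrak{X}_{r}$) yields the margin $\mathfrak{X}_{pr}$ (resp. $\mathfrak{X}_{pq}$). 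This is precisely the claim that $\mathfrak{X}_{pqr}$ is strictly collapsible with respect to $\lambda_{p}^{123}$ over $\mathfrak{X}_{q}$ ($\mathfrak{X}_{r}$) into $\mathfrak{X}_{pr}$ ($\mathfrak{X}_{pq}$) if and only if $X_{p}\ci(X_{q},X_{r})$.

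Since everything is a direct specialization, no genuinely new computation is needed; the only care required, and hence the main (minor) obstacle, is the bookkeeping that identifies the singleton parameter families with the single interaction parameters named in the statement, and that verifies the three collapsing directions furnished by Theorem \ref{th4.2} and Remark \ref{rem4.21}(i) line up correctly with the margins $\mathfrak{X}_{qr}$, $\mathfrak{X}_{pr}$ and $\mathfrak{X}_{pq}$.
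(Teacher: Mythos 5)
Your proposal is correct and matches the paper's intent exactly: the corollary is stated without proof as the immediate singleton specialization of Theorem \ref{th4.2} (for the first claim) together with Remark \ref{rem4.21}(i) (for the second), which is precisely the route you take.
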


\begin{example}\label{ex3.2}
	Consider the $2\times 2\times 3$ Table \ref{t6}, which concerns classroom behaviour (see Everitt (1977)) of 97 students classified according to three factors: \\
	1. $X_{1}$ - Teacher’s rating of classroom behaviour (behaviour) with levels `non deviant' (1) and `deviant' (2), \\
	2. $X_{2}$ - Risk index based on home conditions (risk) with levels `not at risk' (1) and `at risk' (2), and \\
	3. $X_{3}$ - Adversity of school conditions (adversity) with levels `low' (1), `medium' (2) and `high' (3). \\
	\begin{table}[ht]
		\caption{$2\times 2\times 3$ Table}\label{t6}
		\begin{center}
			$
			\begin{array}{|c|c|c|c|}\hline
			& & X_{2} = 1 & X_{2} = 2 \\ \hline
			X_{3} = 1 & X_{1} = 1 & 16 & 7 \\   
			& X_{1} = 2 & 1  & 1 \\ \hline 
			X_{3} = 2 & X_{1} = 1 & 15 & 34 \\   
			& X_{1} = 2 & 3 & 8 \\ \hline 
			X_{3} = 3 & X_{1} = 1 & 5 & 3 \\   
			& X_{1} = 2 & 1 & 3 \\ \hline 
			\end{array}
			$
		\end{center} 
	\end{table} 
	For testing the goodness of fit of various alternative models against the null saturated model, we fit several hierarchical log-linear marginal models to Table \ref{t6} including models of conditional independence, of joint independence and of mutual independence alongwith the no three-factor interaction model. The plausible models for Table \ref{t6} are the conditional independence model $X_{1}\ci X_{3}|X_{2}$ (given by $\lambda_{13}^{123}=\lambda_{123}^{123}=0$), the conditional independence model $X_{1}\ci X_{2}|X_{3}$ (given by $\lambda_{12}^{123}=\lambda_{123}^{123}=0$), the joint independence model $X_{1}\ci (X_{2},X_{3})$ (given by $\lambda_{12}^{123}=\lambda_{13}^{123}=\lambda_{123}^{123}=0$) and the no three-factor interaction model (given by $\lambda_{123}^{123}=0$) based on $p$-values of $0.3903$, $0.5926$, $0.3514$ and $0.6241$ respectively. However, the most parsimonious model is the joint independence model $X_{1}\ci (X_{2},X_{3})$, that is, teacher's rating of classroom behaviour does not depend on the factors of risk index based on home conditions and adversity of school conditions jointly. By Theorem \ref{th4.2}, Table \ref{t6} should be strictly collapsible over $X_{1}$ with respect to $\lambda_{2}^{123}$, $\lambda_{3}^{123}$ and $\lambda_{23}^{123}$. We verify this next. Table \ref{t7} shows the table of expected values obtained under the model $X_{1}\ci (X_{2},X_{3})$.
	\begin{table}[ht]
		\caption{$2\times 2\times 3$ Table}\label{t7}
		\begin{center}
			$
			\begin{array}{|c|c|c|c|}\hline
			& & X_{2} = 1 & X_{2} = 2 \\ \hline
			X_{3} = 1 & X_{1} = 1 & 14.0206 & 6.5979 \\   
			& X_{1} = 2 & 2.9794  & 1.4021 \\ \hline 
			X_{3} = 2 & X_{1} = 1 & 14.8454 & 34.6392 \\   
			& X_{1} = 2 & 3.1546 & 7.3608 \\ \hline 
			X_{3} = 3 & X_{1} = 1 & 4.9484 & 4.9484 \\   
			& X_{1} = 2 & 1.0516 & 1.0516 \\ \hline 
			\end{array}
			$
		\end{center} 
	\end{table}
	From Table \ref{t7}, we have for $j\in\{1,2\}$ and $k\in\{1,2,3\}$
	\begin{equation}\label{t7eq1}
		\tilde{d}_{2}^{(23)}(j) =  \tilde{d}_{3}^{(23)}(k) = \tilde{d}_{23}^{(23)}(j,k) = 0.9671,
	\end{equation}
	which satisfies Condition (\ref{eq3.11}) of Theorem \ref{th3.2}. Again from Table \ref{t7}, we obtain for $i\in\{1,2\}$
	\begin{align}\label{t7eq2}
		\log p_{123}(i,1,1) - \nu_{13}^{123}(i,1) &= \nu_{23}^{123}(1,1) - \nu_{3}^{123}(1) = 0.3769;\nonumber \\
		\log p_{123}(i,1,2) - \nu_{13}^{123}(i,2) &= \nu_{23}^{123}(1,2) - \nu_{3}^{123}(2) = -0.4236;\nonumber \\
		\log p_{123}(i,1,3) - \nu_{13}^{123}(i,3) &= \nu_{23}^{123}(1,3) - \nu_{3}^{123}(3) = 0; \nonumber \\
		\log p_{123}(i,2,1) - \nu_{13}^{123}(i,1) &= \nu_{23}^{123}(2,1) - \nu_{3}^{123}(1) = -0.3769; \nonumber \\
		\log p_{123}(i,2,2) - \nu_{13}^{123}(i,2) &= \nu_{23}^{123}(2,2) - \nu_{3}^{123}(2) = 0.4236; \nonumber\\
		\log p_{123}(i,2,3) - \nu_{13}^{123}(i,3) &= \nu_{23}^{123}(2,3) - \nu_{3}^{123}(3) = 0
	\end{align}
	and 
	\begin{align}\label{t7eq3}
		\log p_{123}(i,1,1) - \nu_{12}^{123}(i,1) &= \nu_{23}^{123}(1,1) - \nu_{2}^{123}(1) = 0.3281; \nonumber \\
		\log p_{123}(i,1,2) - \nu_{12}^{123}(i,1) &= \nu_{23}^{123}(1,2) - \nu_{2}^{123}(2) = 0.3853; \nonumber \\
		\log p_{123}(i,1,3) - \nu_{12}^{123}(i,1) &= \nu_{23}^{123}(1,3) - \nu_{2}^{123}(3) = -0.7134; \nonumber \\
		\log p_{123}(i,2,1) - \nu_{12}^{123}(i,2) &= \nu_{23}^{123}(2,1) - \nu_{2}^{123}(1) =  -0.4568; 
		\nonumber \\
		\log p_{123}(i,2,2) - \nu_{12}^{123}(i,2) &= \nu_{23}^{123}(2,2) - \nu_{2}^{123}(2) = 1.2014;
		\nonumber \\ 
		\log p_{123}(i,2,3) - \nu_{12}^{123}(i,2) &= \nu_{23}^{123}(2,3) - \nu_{2}^{123}(3) = -0.7445,
	\end{align}
	which satisfy Condition (\ref{eq3.12}) of Theorem \ref{th3.2}. Hence, Table \ref{t7} is strictly collapsible over $X_{1}$ into a $2\times 3$ table with respect to $\lambda_{2}^{123}$, $\lambda_{3}^{123}$ and $\lambda_{23}^{123}$ using (\ref{t7eq1})-(\ref{t7eq3}). Indeed, it can be shown $\hat{\lambda}_{2}^{23}(1) = \hat{\lambda}_{2}^{123}(1) = -0.0156$, $\hat{\lambda}_{2}^{23}(2) = \hat{\lambda}_{2}^{123}(2) = 0.0156$, $\hat{\lambda}_{3}^{23}(1) = \hat{\lambda}_{3}^{123}(1) = -0.0644$, $\hat{\lambda}_{3}^{23}(2) = \hat{\lambda}_{3}^{123}(2) =  0.7933$, $\hat{\lambda}_{3}^{23}(3) = \hat{\lambda}_{3}^{123}(3) = -0.7289$, $\hat{\lambda}_{23}^{123}(1,1)=0.3925$, $\hat{\lambda}_{23}^{123}(1,2)=-0.4081$, $\hat{\lambda}_{23}^{123}(1,3)=0.0156$, $\hat{\lambda}_{23}^{123}(2,1)=-0.3925$, $\hat{\lambda}_{23}^{123}(2,2)=0.4081$ and $\hat{\lambda}_{23}^{123}(2,3)=-0.0156$ alongwith $\hat{\lambda}_{123}^{123}(i,j,k)=0$ for $i,j\in\{1,2\}$ and $k\in\{1,2,3\}$.	
\end{example}

The following result gives the connection between strict collapsibility of MLL parameters and mutual independence in a multidimensional table.
\begin{theorem}\label{th4.3}
A $|M|$-dimensional table $\mathfrak{X}_{M}$ where $A$, $B$ and $C$ form a partition of $M$, is strictly collapsible with respect to any two of the following sets of MLL parameters:
\begin{enumerate}
\item[1.] $\lambda_{B'}^{ABC}$ and $\lambda_{C'}^{ABC}$ by collapsing over $\mathfrak{X}_{A}$ into $\mathfrak{X}_{BC}$, 
\item[2.] $\lambda_{A'}^{ABC}$ and $\lambda_{C'}^{ABC}$ by collapsing over $\mathfrak{X}_{B}$ into $\mathfrak{X}_{AC}$, 
\item[3.] $\lambda_{A'}^{ABC}$ and $\lambda_{B'}^{ABC}$ by collapsing over $\mathfrak{X}_{C}$ into $\mathfrak{X}_{AB}$
\end{enumerate}	
if and only if $X_{A}\ci X_{B}\ci X_{C}$, where $A'\subseteq A$, $B'\subseteq B$ and $C'\subseteq C$.
\end{theorem}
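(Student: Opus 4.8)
The plan is to reduce the three-way statement to two facts that are already within reach. The first is the elementary observation that $X_A \ci X_B \ci X_C$ is equivalent to any two of the three joint independencies $X_A \ci (X_B,X_C)$, $X_B \ci (X_A,X_C)$, $X_C \ci (X_A,X_B)$ holding together: if, say, the first two hold, then $p_{ABC}=p_A\,p_{BC}$ while the second relation marginally forces $p_{BC}=p_B\,p_C$, whence $p_{ABC}=p_A\,p_B\,p_C$; the converse is immediate. The second is that each listed set is, by Theorem \ref{th4.2} (and, after relabeling the blocks of the partition, its two symmetric counterparts), tied to one of these joint independencies.

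For sufficiency I would assume $X_A \ci X_B \ci X_C$, so that all three joint independencies hold. Theorem \ref{th4.2} then yields strict collapsibility over $\mathfrak{X}_A$ into $\mathfrak{X}_{BC}$ with respect to $\lambda_{B'}^{ABC},\lambda_{C'}^{ABC},\lambda_{B'C'}^{ABC}$, and symmetrically for the other two collapsings. I would then note that strict collapsibility with respect to a family of effects is inherited by any subfamily: dropping $\lambda_{B'C'}^{ABC}$ removes one collapsibility identity in condition (i) of Definition \ref{def3.1}, while condition (ii) is unchanged, since the parameters forced to vanish above $B'C'$ already vanish as parameters above $B'$. Hence each of the three listed sets, and in particular any two of them, is strictly collapsible.

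For necessity I would fix any two of the listed sets, say sets 1 and 2, the other pairs being identical after relabeling. Reading condition (ii) of Definition \ref{def3.1} for set 1 gives $\lambda_{A'B'}^{ABC}=\lambda_{A'C'}^{ABC}=\lambda_{A'B'C'}^{ABC}=0$, and for set 2 gives $\lambda_{A'B'}^{ABC}=\lambda_{B'C'}^{ABC}=\lambda_{A'B'C'}^{ABC}=0$. Their union is
\begin{equation*}
\lambda_{A'B'}^{ABC}=\lambda_{A'C'}^{ABC}=\lambda_{B'C'}^{ABC}=\lambda_{A'B'C'}^{ABC}=0
\end{equation*}
for all nonempty $A'\subseteq A$, $B'\subseteq B$, $C'\subseteq C$; that is, every cross-block parameter vanishes. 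By Lemma \ref{lem1.3} the first triple gives $X_A \ci (X_B,X_C)$ and the second gives $X_B \ci (X_A,X_C)$, and the reduction above delivers $X_A \ci X_B \ci X_C$. Equivalently, with all cross-block parameters zero, the expansion of Lemma \ref{lem1.2} shows $\log p_{ABC}$ splits additively into functions of $x_A$, $x_B$ and $x_C$, forcing the factorization directly.

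I expect the only genuine difficulty to be bookkeeping rather than ideas. One must check that condition (ii) of strict collapsibility, written out for the smaller effect sets here, coincides with the vanishing conditions delivered by Theorem \ref{th4.2}, and that the union of these conditions over any two of the three sets is the full family of cross-block parameters, which requires verifying three symmetric pairs. A secondary point is to confirm that the collapsibility identities of condition (i) for the reduced sets are genuinely available; this follows either by citing Theorem \ref{th4.2} directly or, as in its proof, by comparing the log-linear expansions of $\log p_{BC}$ in $\mathfrak{X}_{ABC}$ and in $\mathfrak{X}_{BC}$ and invoking Lemma \ref{lem1.1a}.
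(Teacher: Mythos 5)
Your proof is correct, and for the sufficiency half it takes a genuinely more economical route than the paper. The paper does not invoke Theorem \ref{th4.2} at all: it repeats that theorem's computation under the stronger hypothesis of mutual independence, writing out $\log p_{ABC}$ with all cross-block parameters set to zero, comparing the two expansions of $\log p_{BC}$ (and of $\log p_{AC}$), summing over indices and applying Lemma \ref{lem1.1a} to extract $\lambda_{B'}^{BC}=\lambda_{B'}^{ABC}$, $\lambda_{C'}^{BC}=\lambda_{C'}^{ABC}$, and their analogues. You instead observe that mutual independence is equivalent to any two of the three joint independencies, apply Theorem \ref{th4.2} and its relabelled counterparts, and note that strict collapsibility with respect to a family of effects passes to any subfamily --- which is immediate from Definition \ref{def3.1}, since a smaller effect set imposes fewer identities in condition (i) and fewer vanishing constraints in condition (ii) (your remark that the constraints attached to the effect $B'C'$ sit inside those attached to $B'$ is true but not even needed for this direction). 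This buys you a proof with no new computation, at the small cost of justifying the relabelling of the blocks in Theorem \ref{th4.2}, which is harmless since $A$, $B$, $C$ play symmetric roles in the partition. The necessity halves of the two arguments are essentially identical: both unpack condition (ii) of Definition \ref{def3.1} for the two chosen sets, take the union to conclude that every cross-block parameter $\lambda_{A'B'}^{ABC}$, $\lambda_{A'C'}^{ABC}$, $\lambda_{B'C'}^{ABC}$, $\lambda_{A'B'C'}^{ABC}$ vanishes, and invoke Lemma \ref{lem1.3} to recover $X_{A}\ci X_{B}\ci X_{C}$.
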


\begin{remark}\label{rem4.4}
Mutual independence is always necessary and sufficient for strict collapsibility in a $|M|$-dimensional table.
\end{remark}

\begin{corollary}\label{cor4.3}
A 3-dimensional table $\mathfrak{X}_{pqr}$, where $p\neq q\neq r\in\{1,2,3\}$, is strictly collapsible with respect to any two of the following sets of MLL parameters:
\begin{enumerate}
\item[1.] $\lambda_{q}^{123}$ and $\lambda_{r}^{123}$ by collapsing over $\mathfrak{X}_{p}$ into $\mathfrak{X}_{qr}$, 
\item[2.] $\lambda_{p}^{123}$ and $\lambda_{r}^{123}$ by collapsing over $\mathfrak{X}_{q}$ into $\mathfrak{X}_{pr}$, 
\item[3.] $\lambda_{p}^{123}$ and $\lambda_{q}^{123}$ by collapsing over $\mathfrak{X}_{r}$ into $\mathfrak{X}_{pq}$
\end{enumerate}	
if and only if $X_{p}\ci X_{q}\ci X_{r}$.
\end{corollary}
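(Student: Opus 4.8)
The plan is to mirror the structure of Theorems \ref{th4.1} and \ref{th4.2}. The first step is to record the interaction-parameter characterization of mutual independence, namely
\begin{equation*}
X_{A}\ci X_{B}\ci X_{C}\Leftrightarrow\lambda_{A'B'}^{ABC}=\lambda_{A'C'}^{ABC}=\lambda_{B'C'}^{ABC}=\lambda_{A'B'C'}^{ABC}=0
\end{equation*}
for all $A'\subseteq A$, $B'\subseteq B$, $C'\subseteq C$. I would derive this from Lemma \ref{lem1.3} by writing mutual independence as the conjunction of $X_{A}\ci(X_{B},X_{C})$ and $X_{B}\ci X_{C}$. The former, via Lemma \ref{lem1.3} with $BC$ treated as a single block and empty conditioning set, gives $\lambda_{A'B'}^{ABC}=\lambda_{A'C'}^{ABC}=\lambda_{A'B'C'}^{ABC}=0$ directly; the latter is handled by using Lemma \ref{lem1.4} to transport $\lambda_{B'C'}^{BC}$ to $\lambda_{B'C'}^{ABC}$ (marginal independence $X_{A}\ci X_{BC}$ makes $BC$ collapsible), while marginal independence of $X_{B}$ and $X_{C}$ forces $\lambda_{B'C'}^{BC}=0$, yielding $\lambda_{B'C'}^{ABC}=0$.

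For sufficiency, assume $X_{A}\ci X_{B}\ci X_{C}$. Then all mixed interactions vanish and, by Lemma \ref{lem1.2}, the log-linear marginal model reduces to the purely additive form $\log p_{M}(x_{M})=\lambda_{\phi}^{ABC}+\sum_{A'\subseteq A}\lambda_{A'}^{ABC}+\sum_{B'\subseteq B}\lambda_{B'}^{ABC}+\sum_{C'\subseteq C}\lambda_{C'}^{ABC}$. For Part 1 I would collapse $\mathfrak{X}_{M}$ over $\mathfrak{X}_{A}$, equate the induced expression for $\log p_{BC}$ with the genuine $BC$-marginal model, and sum over the arguments of $x_{B}$ and $x_{C}$ separately; Lemma \ref{lem1.1a} then converts these equalities of sums into the collapsibility identities $\lambda_{B'}^{BC}=\lambda_{B'}^{ABC}$ and $\lambda_{C'}^{BC}=\lambda_{C'}^{ABC}$. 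Strictness follows from Condition (ii) of Definition \ref{def3.1}: the interactions $Z$ whose effect properly contains $B'$ (or $C'$) but is not contained in $BC$ are exactly those of type $A'B'$, $A'C'$ and $A'B'C'$, all of which vanish by mutual independence. Part 2 is identical after interchanging the roles of $A$ and $B$.

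The crux is the necessity direction, where the hypothesis is strict collapsibility with respect to \emph{two} of the three sets, say Parts 1 and 2. Here I would read off directly from Condition (ii) of Definition \ref{def3.1} which interactions must vanish. Part 1 (collapsing over $\mathfrak{X}_{A}$ with effects $B'$ and $C'$, so $N\backslash M=A$) forces every $\lambda_{Z}^{ABC}=0$ with $Z\cap A\neq\phi$ whose effect properly contains $B'$ or $C'$, i.e.\ $\lambda_{A'B'}^{ABC}=\lambda_{A'C'}^{ABC}=\lambda_{A'B'C'}^{ABC}=0$. Part 2 (collapsing over $\mathfrak{X}_{B}$ with effects $A'$ and $C'$) analogously forces $\lambda_{A'B'}^{ABC}=\lambda_{B'C'}^{ABC}=\lambda_{A'B'C'}^{ABC}=0$. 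Neither statement alone yields all four families of mixed interactions, but they are complementary: Part 1 is missing only $\lambda_{B'C'}^{ABC}$ (since $B'C'\subseteq BC$), which Part 2 supplies, and Part 2 is missing only $\lambda_{A'C'}^{ABC}$ (since $A'C'\subseteq AC$), which Part 1 supplies. Their union gives $\lambda_{A'B'}^{ABC}=\lambda_{A'C'}^{ABC}=\lambda_{B'C'}^{ABC}=\lambda_{A'B'C'}^{ABC}=0$, which by the characterization above is precisely $X_{A}\ci X_{B}\ci X_{C}$.

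I expect the main obstacle to be conceptual rather than computational: the summation and Lemma \ref{lem1.1a} steps merely repeat the argument of Theorem \ref{th4.2}, but one must identify exactly which interactions each strict-collapsibility hypothesis annihilates and verify that precisely two of the three parts are both needed and sufficient to cover every mixed family. A single part always leaves one two-factor family alive, which is what prevents the theorem from being sharpened to a single set and explains the phrase ``any two''. The remaining pairings (Parts 1 and 3, or Parts 2 and 3) follow by the same complementarity after relabelling $A$, $B$, $C$, so only one pair needs to be treated in detail.
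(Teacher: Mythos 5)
Your proposal is correct and follows essentially the same route as the paper: the corollary is the $|A|=|B|=|C|=1$ specialization of Theorem \ref{th4.3}, and your argument reproduces that theorem's proof — the characterization $X_{A}\ci X_{B}\ci X_{C}\Leftrightarrow\lambda_{A'B'}^{ABC}=\lambda_{A'C'}^{ABC}=\lambda_{B'C'}^{ABC}=\lambda_{A'B'C'}^{ABC}=0$, the collapse-and-compare argument with Lemma \ref{lem1.1a} for sufficiency, and for necessity the observation that the two strict-collapsibility hypotheses annihilate complementary families of mixed interactions whose union is all four. The only cosmetic difference is that you derive the mutual-independence characterization via a Lemma \ref{lem1.4} transport step where the paper simply invokes Lemma \ref{lem1.3}.
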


\section{Smoothness of marginal log-linear parameters under collapsibility}
In this section, we explore the relationship between smooth parameterization and collapsibility in the context of a log-linear marginal model. We first prove a general result on MLL parameters defined within different margins but having a common effect. Then a sufficient condition is provided to show the existence of a smooth MLL parameterization under collapsibility conditions, which is the main result of this section. We also establish a sufficient condition for collapsibility in terms of conditional independence of the variables.

The MLL parameters $\{\lambda_{L}^{M}|L\subseteq M\}$ parameterize a marginal distribution $p_{M}$. Similarly, the conditional distribution $X_{A}|X_{B}$ for disjoint $A$ and $B$ can be smoothly parameterized (see Evans (2015)) by 
\begin{equation}\label{eq5.1}
\lambda_{A|B}\equiv\{\lambda_{L}^{AB}\mid L\subseteq AB,~L\cap A\neq\emptyset\}.
\end{equation} 
That is, $\lambda_{A|B}$ is the collection of all MLL parameters defined within the margin $AB$, whose effects contain some element of $A$. For $L\subseteq M\subset N$, Theorem 3.1 of Evans (2015) provides the exact relationship between $\lambda_{L}^{M}$ and $\lambda_{L}^{N}$ for the case of binary variables. We extend their result to the case of general categorical variables with arbitrary number of levels as follows.

\begin{theorem}\label{th5.1}
Let $A$ and $B$ be disjoint subsets of $V$ with $|\mathfrak{X}_{v}|\geq 2$ for $v\in V$. Then the MLL parameter $\lambda_{L}^{AB}$ may be decomposed as
\begin{equation}\label{eq5.2}
\lambda_{L}^{AB} = \lambda_{L}^{B} + f(\lambda_{A|B})
\end{equation}  
for a smooth function $f$ which vanishes if $X_{A}\ci X_{v}|X_{B\backslash\{v\}}$ for some $v\in L$.  
\end{theorem}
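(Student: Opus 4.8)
The plan is to exploit the factorization of the joint margin into a conditional and a marginal and to track how it propagates through the defining formula \eqref{eq1.4}. Since $L\subseteq B$, I would write $\log p_{AB}(x_{AB})=\log p_{A|B}(x_A\mid x_B)+\log p_B(x_B)$ and substitute into \eqref{eq1.3}; because $AB\backslash L=A\cup(B\backslash L)$, the sum over the $A$-coordinates in the marginal piece contributes exactly the factor $|\mathfrak{X}_A|$ that cancels, giving
\[
\nu_L^{AB}(x_L)=\nu_L^B(x_L)+g_L(x_L),\qquad
g_L(x_L)=\frac{1}{|\mathfrak{X}_A|\,|\mathfrak{X}_{B\backslash L}|}\sum_{j_A\in\mathfrak{X}_A}\sum_{\substack{j_B\in\mathfrak{X}_B\\ j_L=x_L}}\log p_{A|B}(j_A\mid j_B),
\]
where $g_L$ depends only on the conditional distribution $p_{A|B}$.

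Next I would apply the M\"obius inversion \eqref{eq1.4} term by term: summing $(-1)^{|L\backslash L'|}\nu_{L'}^{AB}$ over $L'\subseteq L$ splits into the $\nu_{L'}^B$ part, which reassembles into $\lambda_L^B$, and the $g_{L'}$ part, yielding the decomposition \eqref{eq5.2} with $f=\sum_{L'\subseteq L}(-1)^{|L\backslash L'|}g_{L'}(x_{L'})$. The key observation is that, writing $h(x_B):=\frac{1}{|\mathfrak{X}_A|}\sum_{j_A}\log p_{A|B}(j_A\mid x_B)$, one has $g_L(x_L)=\frac{1}{|\mathfrak{X}_{B\backslash L}|}\sum_{j_B:j_L=x_L}h(j_B)$, i.e. $g_L$ is precisely the average \eqref{eq1.3} computed on the margin $B$ with $h$ in place of $\log p_B$. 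Hence $f$ is the MLL parameter $\lambda_L^B$ of the function $h$, and by the coding identity \eqref{eq1.5} applied to $h$ we obtain the closed form $f=|\mathfrak{X}_B|^{-1}\sum_{y_B}h(y_B)\prod_{v\in L}(|\mathfrak{X}_v|\mathbb{I}_{\{x_v=y_v\}}-1)$.

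For smoothness I would invoke the parameterization recalled in \eqref{eq5.1}: the conditional distribution $p_{A|B}$ is a smooth function of $\lambda_{A|B}$ (Evans (2015)), and since $p_V>0$ forces $p_{A|B}>0$, the composition $\lambda_{A|B}\mapsto h\mapsto f$ is twice continuously differentiable. Thus $f$ is a smooth function of $\lambda_{A|B}$, as asserted.

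The vanishing claim carries the real content. Suppose $X_A\ci X_v\mid X_{B\backslash\{v\}}$ for some $v\in L$. Then $p_{A|B}(x_A\mid x_B)=p(x_A\mid x_v,x_{B\backslash\{v\}})$ does not depend on $x_v$, so $h(x_B)$ is constant in its $x_v$-coordinate. In the coding formula for $f$ the factor indexed by $v\in L$ is $(|\mathfrak{X}_v|\mathbb{I}_{\{x_v=y_v\}}-1)$, and since $h$ is independent of $y_v$ this factor may be summed out first; as $\sum_{y_v}(|\mathfrak{X}_v|\mathbb{I}_{\{x_v=y_v\}}-1)=|\mathfrak{X}_v|-|\mathfrak{X}_v|=0$, the entire expression collapses and $f=0$. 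The main obstacle I anticipate is purely bookkeeping: justifying cleanly that $g_L$ is a genuine MLL-type functional of $h$ on the margin $B$, so that Lemma \ref{lem1.1} and \eqref{eq1.5} apply verbatim with $h$ replacing $\log p_B$, and confirming that \eqref{eq5.1} delivers $C^2$-smoothness of $f$. Both reduce to the already-established properties of the MLL operators rather than to any new estimate.
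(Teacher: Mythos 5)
Your proposal is correct and follows essentially the same route as the paper: the same factorization $\log p_{AB}=\log p_{B}+\log p_{A|B}$, the same identification of $f$ as the M\"{o}bius sum of the conditional term, and the same appeal to the smooth parameterization of $p_{A|B}$ by $\lambda_{A|B}$. The only cosmetic difference is the vanishing step, where you sum out the factor $(|\mathfrak{X}_{v}|\mathbb{I}_{\{x_{v}=y_{v}\}}-1)$ in the product form (\ref{eq1.5}) applied to your auxiliary function $h$, whereas the paper splits the M\"{o}bius sum over subsets containing $v$ versus not and cancels the two halves directly; these are equivalent computations, both resting on $p_{A|B}$ being constant in $x_{v}$.
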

Theorem 3 of Bergsma and Rudas (2002) showed that for $L\subseteq M\subset N$, the MLL parameters $\lambda_{L}^{M}$ and $\lambda_{L}^{N}$ are linearly dependent at certain points in the parameter space. Hence, no smooth parameterization can include two such parameters. As a result, collapsibility conditions (see (\ref{eq2.2})) generally do not define a curved exponential family. However, we provide a sufficient condition next for which a smooth MLL parameterization or a curved exponential family can be obtained under collapsibility conditions.

\begin{theorem}\label{th5.2}
Let $\{A,B\}$ be a partition of $M$. Then there exists a smooth MLL parameterization of $\mathcal{F}$ on $\mathfrak{X}_{M}$ under collapsibility conditions with respect to $L\subseteq M$ if $X_{A}\ci X_{v}|X_{B\backslash\{v\}}$ for some $v\in L$. Also the parameterization is given by $\{\tilde{\lambda}_{L}^{M}\mid L\in\mathbb{P}(M)\backslash\mathcal{D}\}$ where $\mathbb{P}(.)$ denotes the power set and $\mathcal{D}$ is the collection of all sets of the form $A'vB'$ with $\emptyset\neq A'\subseteq A$ and $\emptyset\neq B'\subseteq B\backslash \{v\}$. 
\end{theorem}

\begin{remark}\label{rem5.1}
For a complete but non-hierarchical $\mathcal{P}$, define $\mathcal{P}_{-L} = \{(L',M\backslash L)\mid (L',M)\in\mathcal{P},L\not\subseteq L'\}$. Then by Proposition 3.5 of Evans (2015), $\tilde{\lambda}_{\mathcal{P}}$ is a smooth parameterization of $\mathfrak{X}_{M}$ if and only if $\tilde{\lambda}_{\mathcal{P}_{-L}}$ is a smooth parameterization of $\mathfrak{X}_{M\backslash L}$. 
\end{remark}

Using Theorem \ref{th5.2}, we next provide a sufficient condition for collapsibility of a multidimensional table with respect to MLL parameters as shown below. 

\begin{theorem}\label{th5.3}
Let $A$, $B$ and $C$ form a partition of $M$. Then for $R\in\{A,B,C\}$, the table $\mathfrak{X}_{M}$ is collapsible over $\mathfrak{X}_{R}$ into $\mathfrak{X}_{M\backslash R}$ with respect to $\{\lambda_{L}^{M}|L\subseteq M\backslash R\}$ if $X_{R}\ci X_{v}|X_{(M\backslash R)\backslash\{v\}}$ for some $v\in M\backslash R$.
\end{theorem}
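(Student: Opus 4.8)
The plan is to obtain Theorem~\ref{th5.3} as a direct consequence of Theorem~\ref{th5.1} (equivalently of the collapsibility identity established inside Theorem~\ref{th5.2}), by regarding the pair $\{R,\,M\backslash R\}$ as a \emph{bipartition} of $M$. The three–block partition $A,B,C$ plays no role beyond singling out the block $R$ to be summed over; once $R$ is fixed, only the coarse split of $M$ into $R$ and its complement $M\backslash R$ enters the computation, so the result for $R\in\{A,B,C\}$ reduces to a single generic case.

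First I would specialise Theorem~\ref{th5.1}, taking its ``$A$'' to be $R$ and its ``$B$'' to be $M\backslash R$, so that $AB=M$. For an effect $\phi\neq L\subseteq M\backslash R$ the decomposition (\ref{eq5.2}) becomes
\begin{equation*}
\lambda_{L}^{M}(x_{L})=\lambda_{L}^{M\backslash R}(x_{L})+f(\lambda_{R\mid M\backslash R}),
\end{equation*}
with $f$ the smooth remainder assembled from the conditional parameters $\lambda_{R\mid M\backslash R}$. Reading $M\backslash R$ as the target margin and $M$ as the ambient table, this identity is exactly Definition~\ref{def2.1} in disguise: collapsibility of $\mathfrak{X}_{M}$ over $\mathfrak{X}_{R}$ into $\mathfrak{X}_{M\backslash R}$ with respect to $\lambda_{L}^{M}$ is \emph{equivalent to the vanishing of} $f$. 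Thus the entire theorem is converted into a statement about when $f=0$.

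The key step is then to invoke the vanishing clause of Theorem~\ref{th5.1}: under the hypothesis $X_{R}\ci X_{v}\mid X_{(M\backslash R)\backslash\{v\}}$ the remainder $f(\lambda_{R\mid M\backslash R})$ vanishes whenever the distinguished index $v$ lies in the effect $L$. For every such $L$ we then conclude $\lambda_{L}^{M}=\lambda_{L}^{M\backslash R}$, which is the asserted collapsibility with respect to the corresponding members of the family $\{\lambda_{L}^{M}\mid L\subseteq M\backslash R\}$. The algebra needed here is precisely the $D_{1}=-D_{2}$ cancellation already carried out in the proof of Theorem~\ref{th5.1}, so no new computation is required.

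The main obstacle, and the point I would scrutinise hardest, is exactly the requirement $v\in L$ in that vanishing clause. The single conditional independence $X_{R}\ci X_{v}\mid X_{(M\backslash R)\backslash\{v\}}$ kills $f$ only for effects meeting $v$; for a non-empty $L\subseteq (M\backslash R)\backslash\{v\}$ the remainder need not vanish, as one already sees in a three-variable table, where it collapses to a difference of marginal main-effect parameters that the hypothesis leaves unconstrained. I would therefore close the argument in one of two ways: either (i) interpret the family $\{\lambda_{L}^{M}\mid L\subseteq M\backslash R\}$ as those effects containing $v$, for which the above gives collapsibility outright; or (ii) strengthen the hypothesis to supply, for each effect $L$, some index $w\in L$ with $X_{R}\ci X_{w}\mid X_{(M\backslash R)\backslash\{w\}}$, and then apply the vanishing clause effect by effect. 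Determining which effects are genuinely collapsible under the stated single-index hypothesis is where the real content of the theorem resides.
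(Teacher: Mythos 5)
Your route is exactly the paper's: specialize Theorem \ref{th5.1} with its $A=R$ and $B=M\backslash R$, so that $\lambda_{L}^{M}=\lambda_{L}^{M\backslash R}+f(\lambda_{R|M\backslash R})$ for $L\subseteq M\backslash R$, then kill $f$ via the conditional independence hypothesis and conclude from Definition \ref{def2.1}. There is no methodological difference to report.

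The obstacle you isolate in your final paragraph is, however, a genuine gap, and it is present in the paper's own proof as well: the proof simply asserts that $f(\lambda_{A|B})=0$ under ``$X_{R}\ci X_{v}|X_{(M\backslash R)\backslash\{v\}}$ for some $v\in M\backslash R$'', whereas Theorem \ref{th5.1} delivers the vanishing of $f$ only when $v\in L$; for an effect $L\subseteq(M\backslash R)\backslash\{v\}$ the cancellation $D_{2}=-D_{1}$ of (\ref{eq5.5}) is unavailable and $f$ need not vanish. Your three-variable check is correct and worth making explicit: take $M=\{1,2,3\}$, $R=\{1\}$, $v=2$, and assume only $X_{1}\ci X_{2}|X_{3}$, so that $\log p_{123}=\log p_{13}+\log p_{23}-\log p_{3}$; then (\ref{eq1.4}) gives
\[
\lambda_{3}^{123}(x_{3})-\lambda_{3}^{23}(x_{3})=\frac{1}{|\mathfrak{X}_{1}|}\sum_{j_{1}}\log p_{1|3}(j_{1}|x_{3})-\frac{1}{|\mathfrak{X}_{3}|}\sum_{j_{3}}\frac{1}{|\mathfrak{X}_{1}|}\sum_{j_{1}}\log p_{1|3}(j_{1}|j_{3}),
\]
which is nonzero whenever $X_{1}$ and $X_{3}$ are dependent, so $\mathfrak{X}_{123}$ is not collapsible over $\mathfrak{X}_{1}$ with respect to $\lambda_{3}^{123}$ under the stated hypothesis. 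Hence the theorem as written overclaims, and your two proposed repairs are the right ones: either restrict the family to $\{\lambda_{L}^{M}\mid L\subseteq M\backslash R,\ v\in L\}$, or strengthen the hypothesis so that every effect $L$ contains some $w$ with $X_{R}\ci X_{w}|X_{(M\backslash R)\backslash\{w\}}$ (for instance, $X_{R}\ci X_{M\backslash R}$ suffices for all $L$ simultaneously). What you cannot do is close the gap by the stated single-index hypothesis alone.
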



\section{Conclusions}
In this paper, our main aim has been to investigate collapsibility for categorical data in a multidimensional contingency table. For this purpose, we consider a large class of models called marginal models introduced by Bergsma and Rudas (2002) for studying strictly positive discrete distributions on such tables. The MLL parameters include the ordinary log-linear and multivariate logistic parameters as special cases. Moreover, the marginal models also generalize several other models studied in the literature. For the multidimensional table, it is assumed that each categorical variable has an arbitrary number of levels.
 
First, we obtain some distinctive properties of MLL parameters using simple expressions for such parameters. Then collapsibility and strict collapsibility of these parameters are defined in a general sense by considering two arbitrary margins of a table. We derive several necessary and sufficient conditions for collapsibility and strict collapsibility, which are easily verifiable from a table since they involve only simple functions of the cell probabilities. The MLE's of these probabilities either have closed-form expressions under some models, or can be computed using iterative procedures.  We also provide various results on collapsibility and strict collapsibility with respect to an arbitrary set of MLL parameters containing some common effects. Such results are useful for studying associations among various categorical variables in a table. Further, we explore the relationship of strict collapsibility with various forms of independence of the variables. We establish necessary and sufficient conditions for each case. All the above results are illustrated by analyzing some real-life datasets. Finally, we provide a general result on the connection between parameters having a common effect but defined within different margins. This result is then used to obtain a smooth MLL parameterization or a curved exponential family under collapsibility conditions. A sufficient condition for collapsibility in a multidimensional table is also provided using the result.

\section*{Appendix}

\subsection*{Proof of Lemma \ref{lem1.1}:} 
\begin{proof}
Let $L_{v}=L\backslash\{v\}$ and $L'_{v}\subseteq L_{v}$ for any $v\in L$. To prove the result, we need the following facts. \\
1. For $L'\subseteq L$ and $v\not\in L'$, we have $L'=L'_{v}$, $x_{v}\in\mathfrak{X}_{L'^{c}}=\mathfrak{X}_{{L'_{v}}^{c}}\subset\mathfrak{X}_{M\backslash L'_{v}} $ and
\begin{align}\label{eq1.9}
\frac{1}{|\mathfrak{X}_{v}|}\sum_{x_{v}}\nu_{L'}^{M}(x_{L'})&=\frac{1}{|\mathfrak{X}_{v}|}\sum_{x_{v}}\left[\frac{1}{|\mathfrak{X}_{M\backslash L'}|}\sum_{j_{M}\in\mathfrak{X}_{M}:j_{L'}=x_{L'}}\log p_{M}(j_{M})\right]\quad(\textrm{from}~(\ref{eq1.3})) \nonumber \\
&=\frac{1}{|\mathfrak{X}_{v}|}\times|\mathfrak{X}_{v}|\times\left[\frac{1}{|\mathfrak{X}_{M\backslash L'_{v}}|}\sum_{j_{M}\in\mathfrak{X}_{M}:j_{L'_{v}}=x_{L'_{v}}}\log p_{M}(j_{M})\right] (\because L'=L'_{v},~x_{v}\in\mathfrak{X}_{M\backslash L'_{v}})\nonumber \\
&=\nu_{L'_{v}}^{M}(x_{L'_{v}}).
\end{align} 
2. For $L'\subseteq L$ and $v\in L'$, we have $L'={L'_{v}}\cup\{v\}$, $x_{v}\in\mathfrak{X}_{L'}=\mathfrak{X}_{{L'_{v}}\cup\{v\}}$ and
\begin{align}\label{eq1.10}
\frac{1}{|\mathfrak{X}_{v}|}\sum_{x_{v}}\nu_{L'}^{M}(x_{L'})&=\frac{1}{|\mathfrak{X}_{v}|}\sum_{x_{v}}\left[\frac{1}{|\mathfrak{X}_{M\backslash L'}|}\sum_{j_{M}\in\mathfrak{X}_{M}:j_{L'}=x_{L'}}\log p_{M}(j_{M})\right]\quad(\textrm{from}~(\ref{eq1.3})) \nonumber \\
&=\frac{1}{|\mathfrak{X}_{v}|}\times\frac{1}{|\mathfrak{X}_{M\backslash L'}|}\sum_{x_{v}}\sum_{j_{M}\in\mathfrak{X}_{M}:j_{L'}=x_{L'}}\log p_{M}(j_{M}) \nonumber \\
&=\frac{1}{|\mathfrak{X}_{M\backslash L'_{v}}|}\sum_{j_{M}\in\mathfrak{X}_{M}:j_{L'_{v}}=x_{L'_{v}}}\log p_{M}(j_{M})\quad(\textrm{see explanation below}) \nonumber \\
&=\nu_{L'_{v}}^{M}(x_{L'_{v}}).
\end{align}
The second last line of (\ref{eq1.10}) follows from the fact that $M\backslash L'_{v}=M\cap {L'_{v}}^{c}=M\cap (L'\backslash\{v\})^{c}=M\cap(L'\cap\{v\}^{c})^{c}=M\cap(L'^{c}\cup\{v\})=(M\cap L'^{c})\cup\{v\}=(M\backslash L')\cup\{v\}$, which implies $|\mathfrak{X}_{M\backslash L'_{v}}|=|\mathfrak{X}_{(M\backslash L')\cup \{v\}}|=|\mathfrak{X}_{M\backslash L'}||\mathfrak{X}_{v}|$. Similarly, $\sum_{x_{v}}\sum_{j_{M}\in\mathfrak{X}_{M}:j_{L'}=x_{L'}}\log p_{M}(j_{M})=\sum_{j_{M}\in\mathfrak{X}_{(M\backslash L')\cup \{v\}}}\log p_{M}(j_{M})=\sum_{j_{M}\in\mathfrak{X}_{M\backslash L'_{v}}}\log p_{M}(j_{M})=\sum_{j_{M}\in\mathfrak{X}_{M}:j_{L'_{v}}=x_{L'_{v}}}\log p_{M}(j_{M})$. Also, note that $\{L'|L'\subseteq L;v\in L'\}=\{L'_{v}\cup\{v\}|L'_{v}\subseteq L_{v}\}$ and $\{L'|L'\subseteq L;v\not\in L'\}=\{L'_{v}|L'_{v}\subseteq L_{v}\}$. Hence, using (\ref{eq1.9}) and (\ref{eq1.10}), we have from (\ref{eq1.4}), 
\begin{align*}
\sum_{x_{v}}\lambda_{L}^{M}(x_{L})&=\sum_{x_{v}}\sum_{L'\subseteq L}(-1)^{|L\backslash L'|}\nu_{L'}^{M}(x_{L'}) \\
&=\sum_{x_{v}}\left[\sum_{L'\subseteq L;v\in L'}(-1)^{|L\backslash L'|}\nu_{L'}^{M}(x_{L'})+\sum_{L'\subseteq L;v\not\in L'}(-1)^{|L\backslash L'|}\nu_{L'}^{M}(x_{L'})\right] \\
&=|\mathfrak{X}_{v}|\left[\sum_{\substack{L'\subseteq L\\v\in L'}}(-1)^{|L\backslash L'|}\left(\frac{1}{|\mathfrak{X}_{v}|}\sum_{x_{v}}\nu_{L'}^{M}(x_{L'})\right)+\sum_{\substack{L'\subseteq L\\v\not\in L'}}(-1)^{|L\backslash L'|}\left(\frac{1}{|\mathfrak{X}_{v}|}\sum_{x_{v}}\nu_{L'}^{M}(x_{L'})\right)\right] \\
&=|\mathfrak{X}_{v}|\left[\sum_{L'_{v}\subseteq L_{v}}(-1)^{|L_{v}\backslash L'_{v}|}\nu_{L'_{v}}^{M}(x_{L'_{v}}) + \sum_{L'_{v}\subseteq L_{v}}(-1)^{|(L_{v}\cup\{v\})\backslash L'_{v}|}\nu_{L'_{v}}^{M}(x_{L'_{v}})\right] \\
&=|\mathfrak{X}_{v}|\left[\sum_{L'_{v}\subseteq L_{v}}\{(-1)^{|L_{v}\backslash L'_{v}|}(1-1)\}\nu_{L'_{v}}^{M}(x_{L'_{v}})\right] \\
&=0, 
\end{align*}
which completes the proof. 
\end{proof}

\subsection*{Proof of Lemma \ref{lem1.1a}:} 
\begin{proof}
The reverse implication is obvious. For the forward implication, we assume
\begin{equation}\label{eq1.10b}
\sum_{L'\subseteq L}\lambda_{L'}^{M}(x_{L'})=\sum_{L'\subseteq L}\lambda_{L'}^{N}(x_{L'}).
\end{equation}
First fix $L'\subset L$. Then summing both sides of (\ref{eq1.10b}) over $x_{L''}\subset x_{L}$ with $L''\neq L'$, we get from Lemma \ref{lem1.1}
\begin{equation}\label{eq1.10c}
\lambda_{L'}^{M}(x_{L'})=\lambda_{L'}^{N}(x_{L'}).
\end{equation}		
Substituting (\ref{eq1.10c}) in (\ref{eq1.10b}) and repeating the steps for other $L'\subset L$, we obtain
\begin{equation}\label{eq1.10d}
\lambda_{L'}^{M}(x_{L'})=\lambda_{L'}^{N}(x_{L'})\quad\forall~\emptyset\neq L'\subset L.
\end{equation}
Summing both sides of (\ref{eq1.10d}) over $L'\subset L$ and then using (\ref{eq1.10b}), we obtain $\lambda_{L}^{M}(x_{L})=\lambda_{L}^{N}(x_{L})$, which along with (\ref{eq1.10d}) proves the result.
\end{proof}

\subsection*{Proof of Lemma \ref{lem1.2}:} 
\begin{proof}
Let (\ref{eq1.11}) hold, where $\lambda_{L}^{M}$ satisfies Lemma \ref{lem1.1}. Then for $A\subseteq M$,
\begin{align*}
\nu_{A}^{M}(x_{A})&=\frac{1}{|\mathfrak{X}_{M\backslash A}|}\sum_{j_{M}\in\mathfrak{X}_{M}:j_{A}=x_{A}}\log p_{M}(j_{M})\\
&=\frac{1}{|\mathfrak{X}_{M\backslash A}|}\sum_{j_{M}\in\mathfrak{X}_{M}:j_{A}=x_{A}}\sum_{L\subseteq M}\lambda_{L}^{M}(j_{L}) \\
&=\frac{1}{|\mathfrak{X}_{M\backslash A}|}\left[\sum_{j_{M}\in\mathfrak{X}_{M}:j_{A}=x_{A}}\left\{\sum_{L\subseteq A}\lambda_{L}^{M}(j_{L})+\sum_{L\not\subset A}\lambda_{L}^{M}(j_{L})\right\}\right] \\
&=\frac{|\mathfrak{X}_{M\backslash A}|}{|\mathfrak{X}_{M\backslash A}|}\sum_{L\subseteq A}\lambda_{L}^{M}(x_{L})+\frac{1}{|\mathfrak{X}_{M\backslash A}|}\sum_{L\not\subset A}\left\{\sum_{\substack{j_{M}\in\mathfrak{X}_{M}\\j_{A}=x_{A}}}\lambda_{L}^{M}(j_{L})\right\}~~(\because L\subseteq A\Rightarrow L\cap(M\backslash A)=\emptyset) \\
&=\sum_{L\subseteq A}\lambda_{L}^{M}(x_{L})
\end{align*}
since $L\not\subset A\Rightarrow L\cap(M\backslash A)\neq\emptyset\Rightarrow \sum_{j_{M}\in\mathfrak{X}_{M}:j_{A}=x_{A}}\lambda_{L}^{M}(j_{L})=0$ by Lemma \ref{lem1.1}. 
For the sufficiency part, observe that by substituting $A=M$ in (\ref{eq1.12}), we have $\nu_{M}^{M}=\log p_{M}(x_{M})$ (LHS of (\ref{eq1.11})) using (\ref{eq1.3}). Also, the RHS of (\ref{eq1.11}) and (\ref{eq1.12}) become identical. 
\end{proof}

\subsection*{Proof of Theorem \ref{th2.1}:}
\begin{proof}
	From (\ref{eq2.3}), we have
	\begin{equation}\label{eq2.5}
	\log p_{M}(x_{M})=\nu_{M}^{N}(x_{M}) + d^{(M)}(x_{M}).
	\end{equation}	
Also for any $Z\subseteq M$, 
\begin{align*}
\nu_{Z}^{M}(x_{Z}) &= \frac{1}{|\mathfrak{X}_{M\backslash Z}|}\sum_{j_{M}\in\mathfrak{X}_{M}:j_{Z}=x_{Z}}\log p_{M}{(j_{M}) } \nonumber \\
&=\frac{1}{|\mathfrak{X}_{M\backslash Z}|}\sum_{j_{M}\in\mathfrak{X}_{M}:j_{Z}=x_{Z}}\left[\nu_{M}^{N}(j_{M}) + d^{(M)}(j_{M})\right]\quad(\textrm{from}~(\ref{eq2.5})) \nonumber \\
&=\frac{1}{|\mathfrak{X}_{M\backslash Z}||\mathfrak{X}_{N\backslash M}|}\sum_{\substack{j_{M}\in\mathfrak{X}_{M}\\j_{Z}=x_{Z}}}\sum_{\substack{i_{N}\in\mathfrak{X}_{N}\\i_{M}=x_{M}}}\log p_{N}(i_{N}) + \frac{1}{|\mathfrak{X}_{M\backslash Z}|}\sum_{\substack{j_{M}\in\mathfrak{X}_{M}\\j_{Z}=x_{Z}}}d^{(M)}(j_{M}).
\end{align*}
Since $(M\backslash Z)\cap (N\backslash M)=\emptyset$ and $(M\backslash Z)\cup (N\backslash M)=N\backslash Z$, we obtain
\begin{align}\label{eq2.6}
\nu_{Z}^{M}(x_{Z})&=\frac{1}{|\mathfrak{X}_{N\backslash Z}|}\sum_{\substack{j_{N}\in\mathfrak{X}_{N}\\j_{Z}=x_{Z}}}\log p_{N}(j_{N}) + {\tilde{d}_{Z}}^{(M)}(x_{Z} )\quad(\textrm{from}~(\ref{eq2.4})) \nonumber \\
&= \nu_{Z}^{N}(x_{Z})  + {\tilde{d}_{Z}}^{(M)}(x_{Z} ). 		 
\end{align}
	The rest of the proof uses Lemma \ref{lem1.2} and M\"{o}bius inversion, and is similar to the proof of Theorem 3.1 in Vellaisamy and Vijay (2007).
\end{proof}

\subsection*{Proof of Theorem \ref{th2.2}:} 
\begin{proof}
Using (\ref{eq2.6}), note that the condition (\ref{eq2.9}) is equivalent to
	\begin{equation}\label{eq2.10}
	\sum_{R\subseteq S}(-1)^{|S\backslash R|}\left[\nu_{L_{R}}^{M}(x_{L_{R}})-\nu_{L_{R}}^{N}(x_{L_{R}})\right]=0.
	\end{equation} 
	Using Lemma \ref{lem1.2}, (\ref{eq2.10}) reduces to
	\begin{equation}\label{eq2.11}
	\sum_{R\subseteq S}(-1)^{|S\backslash R|}\left[\sum_{Z\subseteq L_{R}}\lambda_{Z}^{M}(x_{Z})-\sum_{Z\subseteq L_{R}}\lambda_{Z}^{N}(x_{Z})\right]=0
	\end{equation}
	or equivalently
	\begin{equation}\label{eq2.12}
	\sum_{Z\subseteq L_{S}}\lambda_{Z\cup S}^{M}(x_{Z},x_{S})=\sum_{Z\subseteq L_{S}}\lambda_{Z\cup S}^{N}(x_{Z},x_{S}).
	\end{equation}
	The subsequent arguments required to prove the result use Lemma \ref{lem1.1} and follow from those in the proofs of Theorems 3.2 and 3.3 in Vellaisamy and Vijay (2007).
\end{proof}

\subsection*{Proof of Lemma \ref{lem3.1}:} 
\begin{proof}
The arguments are similar to those used for the proof of Lemma 4.1 in Vellaisamy and Vijay (2007).
\end{proof}

\subsection*{Proof of Theorem \ref{th3.1}:} 
\begin{proof}
The result follows from Lemma \ref{lem3.1} and similar arguments used in the proof of Theorem 4.1 in Vellaisamy and Vijay (2007).
\end{proof}

\subsection*{Proof of Theorem \ref{th3.2}:}
\begin{proof}
The result can be proved using Theorems \ref{th2.2} and \ref{th3.1}, condition (ii) of Definition \ref{def3.1} and similar arguments as in the proof of Theorem 4.2 in Vellaisamy and Vijay (2007). 
\end{proof}

\subsection*{Proof of Theorem \ref{th4.1}:} 
\begin{proof}
(a) Sufficiency: \\	
From Lemma \ref{lem1.11}, for the $|M|$-dimensional table, we have
\begin{align}\label{eq4.1}
\log p_{M}(x_{M}) &= \sum_{L\subseteq ABC}\lambda_{L}^{ABC}(x_{L}) \nonumber \\
&= \lambda_{\emptyset}^{ABC} + \sum_{A'\subseteq A}\lambda_{A'}^{ABC}(x_{A'}) + \sum_{B'\subseteq B}\lambda_{B'}^{ABC}(x_{B'}) + \sum_{C'\subseteq C}\lambda_{C'}^{ABC}(x_{C'}) \nonumber \\
& + \sum_{A'B'\subseteq AB}\lambda_{A'B'}^{ABC}(x_{A'B'}) + \sum_{A'C'\subseteq AC}\lambda_{A'C'}^{ABC}(x_{A'C'}) + \sum_{B'C'\subseteq BC}\lambda_{B'C'}^{ABC}(x_{B'C'}) \nonumber \\
& +\sum_{A'B'C'\subseteq ABC}\lambda_{A'B'C'}^{ABC}(x_{A'B'C'})
\end{align}	
for non-empty subsets $A'$, $B'$ and $C'$ of $A$, $B$ and $C$ respectively. Using Lemma \ref{lem1.3}, we have
\begin{equation}\label{eq4.2}
X_{A}\ci X_{B}|X_{C}\Leftrightarrow \lambda_{A'B'}^{ABC}=\lambda_{A'B'C'}^{ABC}=0\quad\forall~\emptyset\neq A'\subseteq A,~\emptyset\neq B'\subseteq B,~\emptyset\neq C'\subseteq C.
\end{equation}
From (\ref{eq4.1}) and (\ref{eq4.2}), we obtain under $X_{A}\ci X_{B}|X_{C}$ 
\begin{align}\label{eq4.3}
\log p_{M}(x_{M}) &=\lambda_{\emptyset}^{ABC} + \sum_{A'\subseteq A}\lambda_{A'}^{ABC}(x_{A'}) + \sum_{B'\subseteq B}\lambda_{B'}^{ABC}(x_{B'}) + \sum_{C'\subseteq C}\lambda_{C'}^{ABC}(x_{C'}) \nonumber \\
& + \sum_{A'C'\subseteq AC}\lambda_{A'C'}^{ABC}(x_{A'C'}) + \sum_{B'C'\subseteq BC}\lambda_{B'C'}^{ABC}(x_{B'C'}).
\end{align}
The logarithms of the marginal probabilities in (\ref{eq4.3}) are
\begin{equation}\label{eq4.4}
\log p_{AB}(x_{AB}) = \lambda_{\emptyset}^{ABC} + \sum_{A'\subseteq A}\lambda_{A'}^{ABC}(x_{A'}) + \sum_{B'\subseteq B}\lambda_{B'}^{ABC}(x_{B'}) + \lambda(x_{A},x_{B}), 
\end{equation}
\begin{equation}\label{eq4.5}
\log p_{AC}(x_{AC}) = \lambda_{\emptyset}^{ABC} + \sum_{A'\subseteq A}\lambda_{A'}^{ABC}(x_{A'}) + \sum_{C'\subseteq C}\lambda_{C'}^{ABC}(x_{C'}) + \sum_{A'C'\subseteq AC}\lambda_{A'C'}^{ABC}(x_{A'C'})  + \lambda(x_{C}), 
\end{equation}
\begin{equation}\label{eq4.6}
\log p_{BC}(x_{BC}) = \lambda_{\emptyset}^{ABC} + \sum_{B'\subseteq B}\lambda_{B'}^{ABC}(x_{B'}) + \sum_{C'\subseteq C}\lambda_{C'}^{ABC}(x_{C'}) + \sum_{B'C'\subseteq BC}\lambda_{B'C'}^{ABC}(x_{B'C'})  + \lambda'(x_{C}), 
\end{equation}
where
\begin{align*}
\lambda(x_{A},x_{B}) &= \log\left(\sum_{x_{C}}\exp\left\{\sum_{C'\subseteq C}\lambda_{C'}^{ABC}(x_{C'}) + \sum_{A'C'\subseteq AC}\lambda_{A'C'}^{ABC}(x_{A'C'}) + \sum_{B'C'\subseteq BC}\lambda_{B'C'}^{ABC}(x_{B'C'})\right\}\right), \\
\lambda(x_{C}) &= \log\left(\sum_{x_{B}}\exp\left\{\sum_{B'\subseteq B}\lambda_{B'}^{ABC}(x_{B'})+\sum_{B'C'\subseteq BC}\lambda_{B'C'}^{ABC}(x_{B'C'})\right\}\right), \\
\lambda'(x_{C}) &= \log\left(\sum_{x_{A}}\exp\left\{\sum_{A'\subseteq A}\lambda_{A'}^{ABC}(x_{A'})+\sum_{A'C'\subseteq AC}\lambda_{A'C'}^{ABC}(x_{A'C'})\right\}\right).
\end{align*}
If we collapse the $|M|$-dimensional table over $\mathfrak{X}_{A}$, we get
\begin{equation}\label{eq4.7}
\log p_{BC}(x_{BC}) = \lambda_{\emptyset}^{BC} + \sum_{B'\subseteq B}\lambda_{B'}^{BC}(x_{B'}) + \sum_{C'\subseteq C}\lambda_{C'}^{BC}(x_{C'}) + \sum_{B'C'\subseteq BC}\lambda_{B'C'}^{BC}(x_{B'C'}).
\end{equation}
We now compare (\ref{eq4.6}) and (\ref{eq4.7}). Summing RHS of both over $x_{B}$ and $x_{C}$ gives
\begin{equation}\label{eq4.8}
\lambda_{\emptyset}^{BC} = \lambda_{\emptyset}^{ABC} + \frac{\sum_{x_{C}}\lambda'(x_{C})}{|\mathfrak{X}_{C}|}.
\end{equation}
Summing RHS of (\ref{eq4.6}) and (\ref{eq4.7}) over $x_{C}$ only, we have
\begin{align}\label{eq4.9}
|\mathfrak{X}_{C}|\left(\lambda_{\emptyset}^{BC} + \sum_{B'\subseteq B}\lambda_{B'}^{BC}(x_{B'})\right) &= |\mathfrak{X}_{C}|\left(\lambda_{\emptyset}^{ABC} + \sum_{B'\subseteq B}\lambda_{B'}^{ABC}(x_{B'})\right) + \sum_{x_{C}}\lambda'(x_{C}) \nonumber \\
\Rightarrow \sum_{B'\subseteq B}\lambda_{B'}^{BC}(x_{B'}) &= \sum_{B'\subseteq B}\lambda_{B'}^{ABC}(x_{B'})\quad(\textrm{using}~(\ref{eq4.8})).
\end{align}
Using Lemma \ref{lem1.1a}, we have from (\ref{eq4.9})
\begin{equation}\label{eq4.11}
\lambda_{B'}^{BC}(x_{B'})=\lambda_{B'}^{ABC}(x_{B'})\quad\forall~B'\subseteq B.
\end{equation}
Now summing RHS of (\ref{eq4.6}) and (\ref{eq4.7}) over $x_{B}$ only gives
\begin{align}\label{eq4.12}
|\mathfrak{X}_{B}|\left(\lambda_{\emptyset}^{BC} + \sum_{C'\subseteq C}\lambda_{C'}^{BC}(x_{C'})\right) &= |\mathfrak{X}_{B}|\left(\lambda_{\emptyset}^{ABC} + \sum_{C'\subseteq C}\lambda_{C'}^{ABC}(x_{C'})+\lambda'(x_{C})\right) \nonumber \\
\Rightarrow\sum_{C'\subseteq C}\lambda_{C'}^{BC}(x_{C'})&=\sum_{C'\subseteq C}\lambda_{C'}^{ABC}(x_{C'}) + \lambda'(x_{C}) - \frac{\sum_{x_{C}}\lambda'(x_{C})}{|\mathfrak{X}_{C}|}\quad(\textrm{using}~(\ref{eq4.8})). 
\end{align}
Summing both sides of (\ref{eq4.8}), (\ref{eq4.9}) and (\ref{eq4.12}), we get
\begin{equation}\label{eq4.13}
\lambda_{\emptyset}^{BC}+\sum_{B'\subseteq B}\lambda_{B'}^{BC}(x_{B'})+\sum_{C'\subseteq C}\lambda_{C'}^{BC}(x_{C'})=\lambda_{\emptyset}^{BC}+\sum_{B'\subseteq B}\lambda_{B'}^{BC}(x_{B'})+\sum_{C'\subseteq C}\lambda_{C'}^{BC}(x_{C'}) + \lambda'(x_{C}).
\end{equation}
From (\ref{eq4.6}), (\ref{eq4.7}) and (\ref{eq4.13}), we have
\begin{equation}\label{eq4.14}
\sum_{B'C'\subseteq BC}\lambda_{B'C'}^{BC}(x_{B'C'})=\sum_{B'C'\subseteq BC}\lambda_{B'C'}^{ABC}(x_{B'C'}). 
\end{equation}
Using Lemma \ref{lem1.1a}, it can be shown from (\ref{eq4.14}) that
\begin{equation}\label{eq4.15}
\lambda_{B'C'}^{BC}(x_{B'C'})=\lambda_{B'C'}^{ABC}(x_{B'C'})\quad\forall~B'C'\subseteq BC.
\end{equation}
Analogous results can be obtained by collapsing $\mathfrak{X}_{M}$ over $\mathfrak{X}_{B}$ and then comparing $p_{AC}(x_{AC})$ in $\mathfrak{X}_{M}$ and $\mathfrak{X}_{AC}$. In this case, we get
\begin{equation}\label{eq4.16}
\lambda_{A'}^{AC}(x_{A'}) = \lambda_{A'}^{ABC}(x_{A'})~\forall~A'\subseteq A; ~\lambda_{A'C'}^{AC}(x_{A'C'}) = \lambda_{A'C'}^{ABC}(x_{A'C'})~\forall~A'C'\subseteq AC.  
\end{equation}
Hence, collapsibility over $\mathfrak{X}_{A}$ ($\mathfrak{X}_{B}$) follows from (\ref{eq4.11}) and (\ref{eq4.15}) ((\ref{eq4.16})). \\
Since $B'\subset A'B'\not\subseteq BC$, $B'C'\subset A'B'C'\not\subseteq BC$ and $\lambda_{A'B'}^{ABC}=\lambda_{A'B'C'}^{ABC}=0$ (see (\ref{eq4.2})), strict collapsibility over $\mathfrak{X}_{A}$ with respect to $\lambda_{B'}^{ABC}$ and $\lambda_{B'C'}^{ABC}$ follows from Definition \ref{def3.1}. \\
Also, since $A'\subset A'B'\not\subseteq AC$, $A'C'\subset A'B'C'\not\subseteq AC$ and $\lambda_{A'B'}^{ABC}=\lambda_{A'B'C'}^{ABC}=0$ (see (\ref{eq4.2})), strict collapsibility over $\mathfrak{X}_{B}$ with respect to $\lambda_{A'}^{ABC}$ and $\lambda_{A'C'}^{ABC}$ follows. 
\vone\noindent
(b) Necessity: \\
Since $\mathfrak{X}_{M}$ is strictly collapsible over $\mathfrak{X}_{A}$ ($\mathfrak{X}_{B}$) with respect to $\lambda_{B'}^{ABC}$ and $\lambda_{B'C'}^{ABC}$ ($\lambda_{A'}^{ABC}$ and $\lambda_{A'C'}^{ABC}$), we have
\begin{enumerate}
	\item[1.]  $\lambda_{B'}^{BC}=\lambda_{B'}^{ABC}$ and $\lambda_{B'C'}^{BC}=\lambda_{B'C'}^{ABC}$ ($\lambda_{A'}^{AC}=\lambda_{A'}^{ABC}$ and $\lambda_{A'C'}^{AC}=\lambda_{A'C'}^{ABC}$), 
	\item[2.] $\lambda_{A'B'}^{ABC} = \lambda_{A'B'C'}^{ABC} = 0$. 
\end{enumerate}
From Point 2 above and using (\ref{eq4.2}), $X_{A}\ci X_{B}|X_{C}$. 
\end{proof}

\subsection*{Proof of Theorem \ref{th4.2}:} 
\begin{proof}
(a) Sufficiency: \\
The MLL parameters $\lambda_{L}^{M}$ for $\mathfrak{X}_{M}$ satisfy (\ref{eq4.1}). Using Lemma \ref{lem1.3}, we have
\begin{equation}\label{eq4.17}
X_{A}\ci (X_{B},X_{C})\Leftrightarrow\lambda_{A'B'}^{ABC}=\lambda_{A'C'}^{ABC}=\lambda_{A'B'C'}^{ABC}=0\quad\forall~\emptyset\neq A'\subseteq A,~\emptyset\neq B'\subseteq B,~\emptyset\neq C'\subseteq C.
\end{equation}	
Hence, from (\ref{eq4.1}) and (\ref{eq4.17}), we obtain under $X_{A}\ci (X_{B},X_{C})$ 
\begin{align}\label{eq4.18}
\log p_{M}(x_{M}) &=\lambda_{\emptyset}^{ABC} + \sum_{A'\subseteq A}\lambda_{A'}^{ABC}(x_{A'}) + \sum_{B'\subseteq B}\lambda_{B'}^{ABC}(x_{B'}) + \sum_{C'\subseteq C}\lambda_{C'}^{ABC}(x_{C'}) + \sum_{B'C'\subseteq BC}\lambda_{B'C'}^{ABC}(x_{B'C'}). 
\end{align} 
The logarithms of the two-dimensional marginal probabilities in (\ref{eq4.18}) are
\begin{equation}\label{eq4.19}
\log p_{AB}(x_{AB}) = \lambda_{\emptyset}^{ABC} + \sum_{A'\subseteq A}\lambda_{A'}^{ABC}(x_{A'}) + \sum_{B'\subseteq B}\lambda_{B'}^{ABC}(x_{B'}) + \lambda(x_{B}), 
\end{equation}
\begin{equation}\label{eq4.20}
\log p_{AC}(x_{AC}) = \lambda_{\emptyset}^{ABC} + \sum_{A'\subseteq A}\lambda_{A'}^{ABC}(x_{A'}) + \sum_{C'\subseteq C}\lambda_{C'}^{ABC}(x_{C'}) + \lambda(x_{C}), 
\end{equation}
\begin{equation}\label{eq4.21}
\log p_{BC}(x_{BC}) = \lambda_{\emptyset}^{ABC} + \sum_{B'\subseteq B}\lambda_{B'}^{ABC}(x_{B'}) + \sum_{C'\subseteq C}\lambda_{C'}^{ABC}(x_{C'}) + \sum_{B'C'\subseteq BC}\lambda_{B'C'}^{ABC}(x_{B'C'}) + \lambda(x_{A}),
\end{equation} 
where
\begin{align*}
\lambda(x_{B}) &= \log\left(\sum_{x_{C}}\exp\left\{\sum_{C'\subseteq C}\lambda_{C'}^{ABC}(x_{C'}) + \sum_{B'C'\subseteq BC}\lambda_{B'C'}^{ABC}(x_{B'C'})\right\}\right), \\
\lambda(x_{C}) &= \log\left(\sum_{x_{B}}\exp\left\{\sum_{B'\subseteq B}\lambda_{B'}^{ABC}(x_{B'})+\sum_{B'C'\subseteq BC}\lambda_{B'C'}^{ABC}(x_{B'C'})\right\}\right), \\
\lambda(x_{A}) &= \log\left(\sum_{x_{A}}\exp\left\{\sum_{A'\subseteq A}\lambda_{A'}^{ABC}(x_{A'})\right\}\right).
\end{align*}
If we collapse $\mathfrak{X}_{M}$ over $\mathfrak{X}_{A}$, we get
\begin{equation}\label{eq4.22}
\log p_{BC}(x_{BC}) = \lambda_{\emptyset}^{BC} + \sum_{B'\subseteq B}\lambda_{B'}^{BC}(x_{B'}) + \sum_{C'\subseteq C}\lambda_{C'}^{BC}(x_{C'}) + \sum_{B'C'\subseteq BC}\lambda_{B'C'}^{BC}(x_{B'C'}).
\end{equation}
We now compare (\ref{eq4.21}) and (\ref{eq4.22}). Summing RHS of both over $x_{B}$ and $x_{C}$ gives
\begin{equation}\label{eq4.23}
\lambda_{\emptyset}^{BC} = \lambda_{\emptyset}^{ABC} + \lambda(x_{A}).
\end{equation}
Summing RHS of (\ref{eq4.21}) and (\ref{eq4.22}) over $x_{C}$ only, we have
\begin{align}\label{eq4.24}
|\mathfrak{X}_{C}|\left(\lambda_{\emptyset}^{BC} + \sum_{B'\subseteq B}\lambda_{B'}^{BC}(x_{B'})\right) &= |\mathfrak{X}_{C}|\left(\lambda_{\emptyset}^{ABC} + \sum_{B'\subseteq B}\lambda_{B'}^{ABC}(x_{B'}) + \lambda(x_{A})\right) \nonumber \\
\Rightarrow \sum_{B'\subseteq B}\lambda_{B'}^{BC}(x_{B'}) &= \sum_{B'\subseteq B}\lambda_{B'}^{ABC}(x_{B'})\quad(\textrm{using}~(\ref{eq4.23})).
\end{align}
Using Lemma \ref{lem1.1a}, we have from (\ref{eq4.24})
\begin{equation}\label{eq4.26}
\lambda_{B'}^{BC}(x_{B'})=\lambda_{B'}^{ABC}(x_{B'})\quad\forall~B'\subseteq B.
\end{equation}
Now summing RHS of (\ref{eq4.21}) and (\ref{eq4.22}) over $x_{B}$ only gives
\begin{align}\label{eq4.27}
|\mathfrak{X}_{B}|\left(\lambda_{\emptyset}^{BC} + \sum_{C'\subseteq C}\lambda_{C'}^{BC}(x_{C'})\right) &= |\mathfrak{X}_{B}|\left(\lambda_{\emptyset}^{ABC} + \sum_{C'\subseteq C}\lambda_{C'}^{ABC}(x_{C'})+\lambda(x_{A})\right) \nonumber \\
\Rightarrow \sum_{C'\subseteq C}\lambda_{C'}^{BC}(x_{C'}) &= \sum_{C'\subseteq C}\lambda_{C'}^{ABC}(x_{C'})\quad(\textrm{using}~(\ref{eq4.23})).
\end{align}
Using Lemma \ref{lem1.1a}, it can be shown from (\ref{eq4.27}) that
\begin{equation}\label{eq4.28}
\lambda_{C'}^{BC}(x_{C'})=\lambda_{C'}^{ABC}(x_{C'})\quad\forall~C'\subseteq C.
\end{equation}
From (\ref{eq4.21})-(\ref{eq4.24}) and (\ref{eq4.27}), we have
\begin{equation}\label{eq4.28a}
\sum_{B'C'\subseteq BC}\lambda_{B'C'}^{BC}(x_{B'C'}) = \sum_{B'C'\subseteq BC}\lambda_{B'C'}^{ABC}(x_{B'C'}),
\end{equation}
which implies from Lemma \ref{lem1.1a}
\begin{equation}\label{eq4.28b}
\lambda_{B'C'}^{BC}(x_{B'C'}) = \lambda_{B'C'}^{ABC}(x_{B'C'})\quad\forall~B'C'\subseteq BC.
\end{equation}
Hence, collapsibility follows from (\ref{eq4.26}), (\ref{eq4.28}) and (\ref{eq4.28b}). \\
Since $B'\subset A'B'\not\subseteq BC$, $C'\subset A'C'\not\subseteq BC$ and $B',C',B'C'\subset A'B'C'\not\subseteq BC$ with $\lambda_{A'B'}^{ABC}=\lambda_{A'C'}^{ABC}=\lambda_{A'B'C'}^{ABC}=0$ (see (\ref{eq4.17})), strict collapsibility follows from Definition \ref{def3.1}. \\
\vone\noindent
(b) Necessity: \\
Strict collapsibility over $\mathfrak{X}_{A}$ with respect to $\lambda_{B'}^{ABC}$, $\lambda_{C'}^{ABC}$ and $\lambda_{B'C'}^{ABC}$ implies
\begin{enumerate}
	\item[1a.]  $\lambda_{B'}^{BC}=\lambda_{B'}^{ABC}$,  $\lambda_{C'}^{BC}=\lambda_{C'}^{ABC}$ and $\lambda_{B'C'}^{BC}=\lambda_{B'C'}^{ABC}$, 
	\item[1b.] $\lambda_{A'B'}^{ABC} = \lambda_{A'C'}^{ABC} = \lambda_{A'B'C'}^{ABC} = 0$. 
\end{enumerate}
From 1b above, we have $X_{A}\ci (X_{B},X_{C})$ using (\ref{eq4.17}). Hence, the result follows. 
\end{proof}

\subsection*{Proof of Theorem \ref{th4.3}:} 
\begin{proof}
Without loss of generality, we consider strict collapsibility with respect to MLL parameters in Parts 1 and 2 of Theorem \ref{th4.3}. The proof for Parts 1 and 3 or Parts 2 and 3 follows similarly. \\
(a) Sufficiency: \\
The MLL parameters corresponding to $\mathfrak{X}_{M}$ satisfy (\ref{eq4.1}). Using Lemma \ref{lem1.3}, we have
\begin{equation}\label{eq4.30}
X_{A}\ci X_{B}\ci X_{C}\Leftrightarrow\lambda_{A'B'}^{ABC}=\lambda_{A'C'}^{ABC}=\lambda_{B'C'}^{ABC}=\lambda_{A'B'C'}^{ABC}=0~\forall~\emptyset\neq A'\subseteq A,\emptyset\neq B'\subseteq B,\emptyset\neq C'\subseteq C.
\end{equation}	
Hence, from (\ref{eq4.1}) and (\ref{eq4.30}), we obtain under $X_{A}\ci X_{B}\ci X_{C}$ 
\begin{align}\label{eq4.31}
\log p_{M}(x_{M}) &=\lambda_{\emptyset}^{ABC} + \sum_{A'\subseteq A}\lambda_{A'}^{ABC}(x_{A'}) + \sum_{B'\subseteq B}\lambda_{B'}^{ABC}(x_{B'}) + \sum_{C'\subseteq C}\lambda_{C'}^{ABC}(x_{C'}). 
\end{align} 
The logarithms of the two-dimensional marginal probabilities in (\ref{eq4.31}) are
\begin{equation}\label{eq4.32}
\log p_{AB}(x_{AB}) = \lambda_{\emptyset}^{ABC} + \sum_{A'\subseteq A}\lambda_{A'}^{ABC}(x_{A'}) + \sum_{B'\subseteq B}\lambda_{B'}^{ABC}(x_{B'}) + \lambda(x_{C}), 
\end{equation}
\begin{equation}\label{eq4.33}
\log p_{AC}(x_{AC}) = \lambda_{\emptyset}^{ABC} + \sum_{A'\subseteq A}\lambda_{A'}^{ABC}(x_{A'}) + \sum_{C'\subseteq C}\lambda_{C'}^{ABC}(x_{C'}) + \lambda(x_{B}), 
\end{equation}
\begin{equation}\label{eq4.34}
\log p_{BC}(x_{BC}) = \lambda_{\emptyset}^{ABC} + \sum_{B'\subseteq B}\lambda_{B'}^{ABC}(x_{B'}) + \sum_{C'\subseteq C}\lambda_{C'}^{ABC}(x_{C'})  + \lambda(x_{A}),
\end{equation} 
where
\begin{align*}
\lambda(x_{C}) &= \log\left(\sum_{x_{C}}\exp\left\{\sum_{C'\subseteq C}\lambda_{C'}^{ABC}(x_{C'}) \right\}\right), \\
\lambda(x_{B}) &= \log\left(\sum_{x_{B}}\exp\left\{\sum_{B'\subseteq B}\lambda_{B'}^{ABC}(x_{B'})\right\}\right), \\
\lambda(x_{A}) &= \log\left(\sum_{x_{A}}\exp\left\{\sum_{A'\subseteq A}\lambda_{A'}^{ABC}(x_{A'})\right\}\right).
\end{align*}
Since $X_{A}\ci X_{B}\ci X_{C}\Rightarrow X_{B}\ci X_{C}\Leftrightarrow \lambda_{B'C'}^{BC}=0$, if we collapse $\mathfrak{X}_{M}$ over $\mathfrak{X}_{A}$, we get
\begin{equation}\label{eq4.35}
\log p_{BC}(x_{BC}) = \lambda_{\emptyset}^{BC} + \sum_{B'\subseteq B}\lambda_{B'}^{BC}(x_{B'}) + \sum_{C'\subseteq C}\lambda_{C'}^{BC}(x_{C'}).
\end{equation}
We now compare (\ref{eq4.34}) and (\ref{eq4.35}). Summing RHS of both over $x_{B}$ and $x_{C}$ gives
\begin{equation}\label{eq4.36}
\lambda_{\emptyset}^{BC} = \lambda_{\emptyset}^{ABC} + \lambda(x_{A}).
\end{equation}
Summing RHS of (\ref{eq4.34}) and (\ref{eq4.35}) over $x_{C}$ only, we have
\begin{align}\label{eq4.37}
|\mathfrak{X}_{C}|\left(\lambda_{\emptyset}^{BC} + \sum_{B'\subseteq B}\lambda_{B'}^{BC}(x_{B'})\right) &= |\mathfrak{X}_{C}|\left(\lambda_{\emptyset}^{ABC} + \sum_{B'\subseteq B}\lambda_{B'}^{ABC}(x_{B'}) + \lambda(x_{A})\right) \nonumber \\
\Rightarrow \sum_{B'\subseteq B}\lambda_{B'}^{BC}(x_{B'}) &= \sum_{B'\subseteq B}\lambda_{B'}^{ABC}(x_{B'})\quad(\textrm{using}~(\ref{eq4.36})).
\end{align}
Using Lemma \ref{lem1.1a}, we have from (\ref{eq4.37})
\begin{equation}\label{eq4.39}
\lambda_{B'}^{BC}(x_{B'})=\lambda_{B'}^{ABC}(x_{B'})\quad\forall~B'\subseteq B.
\end{equation}
Now summing RHS of (\ref{eq4.34}) and (\ref{eq4.35}) over $x_{B}$ only gives
\begin{align}\label{eq4.40}
|\mathfrak{X}_{B}|\left(\lambda_{\emptyset}^{BC} + \sum_{C'\subseteq C}\lambda_{C'}^{BC}(x_{C'})\right) &= |\mathfrak{X}_{B}|\left(\lambda_{\emptyset}^{ABC} + \sum_{C'\subseteq C}\lambda_{C'}^{ABC}(x_{C'})+\lambda(x_{A})\right) \nonumber \\
\Rightarrow \sum_{C'\subseteq C}\lambda_{C'}^{BC}(x_{C'}) &= \sum_{C'\subseteq C}\lambda_{C'}^{ABC}(x_{C'})\quad(\textrm{using}~(\ref{eq4.36})).
\end{align}
Using Lemma \ref{lem1.1a}, it can be shown from (\ref{eq4.40}) that
\begin{equation}\label{eq4.41}
\lambda_{C'}^{BC}(x_{C'})=\lambda_{C'}^{ABC}(x_{C'})\quad\forall~C'\subseteq C.
\end{equation}
Similarly, by collapsing $\mathfrak{X}_{M}$ over $\mathfrak{X}_{B}$, we get 
\begin{equation}\label{eq4.42}
\lambda_{A'}^{AC}(x_{A'})=\lambda_{A'}^{ABC}(x_{A'})~\textrm{and}~ \lambda_{C'}^{BC}(x_{C'})=\lambda_{C'}^{ABC}(x_{C'})\quad\forall~A',C'.
\end{equation}
Hence, collapsibility follows from (\ref{eq4.39}) and (\ref{eq4.41}) for Part 1, and from (\ref{eq4.42}) for Part 2. \\
Since $B'\subset A'B'\not\subseteq BC$, $C'\subset A'C'\not\subseteq BC$ and $B',C'\subset A'B'C'\not\subseteq BC$ with $\lambda_{A'B'}^{ABC}=\lambda_{A'C'}^{ABC}=\lambda_{A'B'C'}^{ABC}=0$ (see (\ref{eq4.30})), strict collapsibility follows for Part 1 from Definition \ref{def3.1}. \\
For Part 2, note that $A'\subset A'B'\not\subseteq AC$, $C'\subset B'C'\not\subseteq AC$ and $A',C'\subset A'B'C'\not\subseteq BC$ with  $\lambda_{A'B'}^{ABC}=\lambda_{B'C'}^{ABC}=\lambda_{A'B'C'}^{ABC}=0$ (see (\ref{eq4.30})) implying strict collapsibility. \\
\vone\noindent
(b) Necessity: \\
For Part 1, strict collapsibility over $\mathfrak{X}_{A}$ with respect to $\lambda_{B'}^{ABC}$ and $\lambda_{C'}^{ABC}$ implies
\begin{enumerate}
	\item[1a.]  $\lambda_{B'}^{BC}=\lambda_{B'}^{ABC}$ and $\lambda_{C'}^{BC}=\lambda_{C'}^{ABC}$, 
	\item[1b.] $\lambda_{A'B'}^{ABC} = \lambda_{A'C'}^{ABC} = \lambda_{A'B'C'}^{ABC} = 0$. 
\end{enumerate}
For Part 2, strict collapsibility over $\mathfrak{X}_{B}$ with respect to $\lambda_{A'}^{ABC}$ and $\lambda_{C'}^{ABC}$ implies
\begin{enumerate}
	\item[2a.]  $\lambda_{A'}^{AC}=\lambda_{A'}^{ABC}$ and $\lambda_{C'}^{AC}=\lambda_{C'}^{ABC}$, 
	\item[2b.] $\lambda_{A'B'}^{ABC} = \lambda_{B'C'}^{ABC} = \lambda_{A'B'C'}^{ABC} = 0$. 
\end{enumerate}
From 1b and 2b above, we have $\lambda_{A'B'}^{ABC} = \lambda_{A'C'}^{ABC} = \lambda_{B'C'}^{ABC} = \lambda_{A'B'C'}^{ABC} = 0\Leftrightarrow  X_{A}\ci X_{B}\ci X_{C}$ using (\ref{eq4.30}). Hence, the result follows. 
\end{proof} 

\subsection*{Proof of Theorem \ref{th5.1}:} 
\begin{proof}
Note that
\begin{align}\label{eq5.3}
\lambda_{L}^{AB}(x_{L}) &= \sum_{L'\subseteq L}(-1)^{|L\backslash L'|}\nu_{L'}^{AB}(x_{L'}) \nonumber \\
&= \sum_{L'\subseteq L}(-1)^{|L\backslash L'|}\frac{1}{|\mathfrak{X}_{AB\backslash L'}|}\sum_{\substack{j_{AB}\in\mathfrak{X}_{AB}\\j_{L'}=x_{L'}}}\log p_{AB}(j_{AB}) \nonumber \\
&= \sum_{L'\subseteq L}(-1)^{|L\backslash L'|}\frac{1}{|\mathfrak{X}_{A\backslash L'}||\mathfrak{X}_{B\backslash L'}|}\sum_{\substack{j_{AB}\in\mathfrak{X}_{AB}\\j_{L'}=x_{L'}}}[\log p_{B}(j_{B})+\log p_{A|B}(j_{A}|j_{B})] \nonumber \\
&= \sum_{L'\subseteq L}(-1)^{|L\backslash L'|}\frac{1}{|\mathfrak{X}_{A\backslash L'}||\mathfrak{X}_{B\backslash L'}|}\sum_{\substack{j_{A}\in\mathfrak{X}_{A}\\j_{L'}=x_{L'}}}\sum_{\substack{j_{B}\in\mathfrak{X}_{B}\\j_{L'}=x_{L'}}}\log p_{B}(j_{B}) \nonumber \\
& + \sum_{L'\subseteq L}(-1)^{|L\backslash L'|}\frac{1}{|\mathfrak{X}_{A\backslash L'}||\mathfrak{X}_{B\backslash L'}|}\sum_{\substack{j_{AB}\in\mathfrak{X}_{AB}\\j_{L'}=x_{L'}}}\log p_{A|B}(j_{A}|j_{B}) \nonumber \\
&= \sum_{L'\subseteq L}(-1)^{|L\backslash L'|}\frac{1}{|\mathfrak{X}_{A\backslash L'}||\mathfrak{X}_{B\backslash L'}|}\times|\mathfrak{X}_{A\backslash L'}|\sum_{\substack{j_{B}\in\mathfrak{X}_{B}\\j_{L'}=x_{L'}}}\log p_{B}(j_{B}) \nonumber \\
& + \sum_{L'\subseteq L}(-1)^{|L\backslash L'|}\frac{1}{|\mathfrak{X}_{AB\backslash L'}|}\sum_{\substack{j_{AB}\in\mathfrak{X}_{AB}\\j_{L'}=x_{L'}}}\log p_{A|B}(j_{A}|j_{B}) \nonumber \\
&= \lambda_{L}^{B}(x_{L}) + f(\lambda_{A|B})\quad\textrm{(say)}.
\end{align}	
Since the second term on the RHS of (\ref{eq5.3}) is a smooth function of the conditional probabilities $p_{A|B}(j_{A}|j_{B})$, it implies that $f$ is also a smooth function of the MLL parameters $\lambda_{A|B}$ defined in (\ref{eq5.1}). Suppose now $X_{A}\ci X_{v}|X_{B\backslash\{v\}}$ for some $v\in L$. Then
\begin{align}\label{eq5.4}
f(\lambda_{A|B}) &= \sum_{L'\subseteq L}(-1)^{|L\backslash L'|}\frac{1}{|\mathfrak{X}_{AB\backslash L'}|}\sum_{\substack{j_{AB}\in\mathfrak{X}_{AB}\\j_{L'}=x_{L'}}}\log p_{A|B}(j_{A}|j_{B}) \nonumber \\
&= \sum_{L'\subseteq L}(-1)^{|L\backslash L'|}\frac{1}{|\mathfrak{X}_{AB\backslash L}||\mathfrak{X}_{L\backslash L'}|}\sum_{\substack{j_{AB}\in\mathfrak{X}_{AB}\\j_{L'}=x_{L'}}}\log p_{A|B}(j_{A}|j_{v},j_{B\backslash\{v\}}) \nonumber \\
&=\frac{1}{|\mathfrak{X}_{AB\backslash L}|}\sum_{L'\subseteq L}(-1)^{|L\backslash L'|}\frac{1}{|\mathfrak{X}_{L\backslash L'}|}\sum_{\substack{j_{AB}\in\mathfrak{X}_{AB}\\j_{L'}=x_{L'}}}\log p_{A|B\backslash\{v\}}(j_{A}|j_{B\backslash\{v\}}) \nonumber \\ 
&=\frac{1}{|\mathfrak{X}_{AB\backslash L}|}\sum_{\substack{L'\subseteq L\\v\in L'}}(-1)^{|L\backslash L'|}\frac{1}{|\mathfrak{X}_{L\backslash L'}|}\sum_{\substack{j_{AB}\in\mathfrak{X}_{AB}\\j_{L'}=x_{L'}}}\log p_{A|B\backslash\{v\}}(j_{A}|j_{B\backslash\{v\}}) \nonumber \\ 
&+\frac{1}{|\mathfrak{X}_{AB\backslash L}|}\sum_{\substack{L'\subseteq L\\v\not\in L'}}(-1)^{|L\backslash L'|}\frac{1}{|\mathfrak{X}_{L\backslash L'}|}\sum_{\substack{j_{AB}\in\mathfrak{X}_{AB}\\j_{L'}=x_{L'}}}\log p_{A|B\backslash\{v\}}(j_{A}|j_{B\backslash\{v\}}) \nonumber \\ 
&=D_{1} + D_{2} \quad\textrm{(say)}.
\end{align}
Now
\begin{align}\label{eq5.5}
D_{2} &= \frac{1}{|\mathfrak{X}_{AB\backslash L}|}\sum_{\substack{L'\subseteq L\\v\not\in L'}}(-1)^{|L\backslash L'|}\frac{1}{|\mathfrak{X}_{L\backslash L'}|}\sum_{\substack{j_{AB}\in\mathfrak{X}_{AB}\\j_{L'}=x_{L'}}}\log p_{A|B\backslash\{v\}}(j_{A}|j_{B\backslash\{v\}}) \nonumber \\
&= \frac{1}{|\mathfrak{X}_{AB\backslash L}|}\sum_{\substack{L'\subseteq L\\v\not\in L'}}(-1)^{|L\backslash L'|}\frac{1}{|\mathfrak{X}_{L\backslash L'}|}\sum_{j_{v}\in\mathfrak{X}_{v}}\sum_{\substack{j_{AB}\in\mathfrak{X}_{AB}\\j_{L'\cup\{v\}}=x_{L'\cup\{v\}}}}\log p_{A|B\backslash\{v\}}(j_{A}|j_{B\backslash\{v\}}) \nonumber \\
&=\frac{1}{|\mathfrak{X}_{AB\backslash L}|}\sum_{\substack{L'\subseteq L\\v\not\in L'}}(-1)^{(|L\backslash L'\cup\{v\}|+|v|)}\frac{1}{|\mathfrak{X}_{L\backslash (L'\cup\{v\})}||\mathfrak{X}_{v}|}\times|\mathfrak{X}_{v}|\sum_{\substack{j_{AB}\in\mathfrak{X}_{AB}\\j_{L'\cup\{v\}}=x_{L'\cup\{v\}}}}\log p_{A|B\backslash\{v\}}(j_{A}|j_{B\backslash\{v\}}) \nonumber \\
&= -\frac{1}{|\mathfrak{X}_{AB\backslash L}|}\sum_{\substack{L'\subseteq L\\v\not\in L'}}(-1)^{|L\backslash L'\cup\{v\}|}\frac{1}{|\mathfrak{X}_{L\backslash (L'\cup\{v\})}|}\sum_{\substack{j_{AB}\in\mathfrak{X}_{AB}\\j_{L'\cup\{v\}}=x_{L'\cup\{v\}}}}\log p_{A|B\backslash\{v\}}(j_{A}|j_{B\backslash\{v\}}) \nonumber \\
&= -\frac{1}{|\mathfrak{X}_{AB\backslash L}|}\sum_{\substack{L'\subseteq L\\v\in L'}}(-1)^{|L\backslash L'|}\frac{1}{|\mathfrak{X}_{L\backslash L'}|}\sum_{\substack{j_{AB}\in\mathfrak{X}_{AB}\\j_{L'}=x_{L'}}}\log p_{A|B\backslash\{v\}}(j_{A}|j_{B\backslash\{v\}}) \nonumber \\
&=-D_{1}. 
\end{align}
From (\ref{eq5.4}) and (\ref{eq5.5}), we get $f(\lambda_{A|B})=0$. This completes the proof.
\end{proof}

\subsection*{Proof of Theorem \ref{th5.2}:} 
\begin{proof}
By Theorem \ref{th5.1}, we have
\begin{equation}\label{eq5.6}
\lambda_{L}^{M} = \lambda_{L}^{B} + f(\lambda_{A|B})
\end{equation}
for a smooth function $f$. Also, since $X_{A}\ci X_{v}|X_{B\backslash\{v\}}$ for some $v\in L$, we have $f(\lambda_{A|B})=0$. So
\begin{equation}\label{eq5.7}
\lambda_{L}^{M}(x_{L}) = \lambda_{L}^{B}(x_{L})\quad\forall~x_{L}\in\mathfrak{X}_{L}, 
\end{equation}
which implies that $\mathfrak{X}_{M}$ is collapsible into $\mathfrak{X}_{B}$ with respect to $\lambda_{L}^{M}$ for $L\subseteq M$. Now consider two complete MLL parameterizations of $\mathcal{F}$ on $\mathfrak{X}_{M}$ corresponding to the collections $\mathcal{S}$ and $\mathcal{T}$ (say). Let $\mathcal{S}$ be non-hierarchical, while $\mathcal{T}$ is hierarchical. Since $\mathcal{T}$ is both hierarchical and complete, $\tilde{\lambda}_{\mathcal{T}}$ is a smooth MLL parameterization of $\mathcal{F}$ on $\mathfrak{X}_{M}$ by Theorem 2 of Bergsma and Rudas (2002). The MLL parameters defined by (\ref{eq5.7}) are non-smooth by Theorem 3 of Bergsma and Rudas (2002), and are hence embedded not in $\mathcal{T}$ but in $\mathcal{S}$. Specifically, the effect $L$ is defined in $\mathcal{S}$ not within the first but some subsequent margin of which it is a subset. However, this is not the case with respect to $L$ in $\mathcal{T}$. Let $B$ be the first margin of which $L$ is a subset in $\mathcal{S}$ and $\mathcal{T}$. Then from (\ref{eq5.7}), $L$ is defined within $M$ instead of $B$ in $\mathcal{S}$, while it has to be defined within $B$ in $\mathcal{T}$. 

In general, if the conditional distribution $X_{A}|X_{B}$ is fixed, that is, $p_{A|B}$ or $f(\lambda_{A|B})$ is known, then the relationship between $\lambda_{L}^{B}$ and $\lambda_{L}^{M}$ is linear from (\ref{eq5.6}). Indeed, $\lambda_{L}^{B}$ and $\lambda_{L}^{M}$ become interchangeable as part of a parameterization, preserving smoothness. From (\ref{eq5.7}), $f$ is known since $f=0$. This implies $\lambda_{L}^{B}$ and $\lambda_{L}^{M}$ are interchangeable, that is, $\tilde{\lambda}_{\mathcal{S}}$ is smooth if and only if $\tilde{\lambda}_{\mathcal{T}}$ is also smooth, which is true. Thus $\tilde{\lambda}_{\mathcal{S}}$ provides a smooth parameterization of $\mathcal{F}$ on $\mathfrak{X}_{M}$ under collapsibility conditions thereby defining a curved exponential family. By assumption of conditional independence and Lemma \ref{lem1.3}, we have $\lambda_{A'vB'}^{M}=0$ for every $\emptyset\neq A'\subseteq A$ and $\emptyset\neq B'\subseteq B$. Hence  $\tilde{\lambda}_{\mathcal{S}}$ is given by $\{\tilde{\lambda}_{L}^{M}\mid L\in\mathbb{P}(M)\backslash\mathcal{D}\}$ where $\mathcal{D}$ is the collection of all sets of the form $A'vB'$ with $\emptyset\neq A'\subseteq A$ and $\emptyset\neq B'\subseteq B\backslash \{v\}$. 
\end{proof}

\subsection*{Proof of Theorem \ref{th5.3}:} 
\begin{proof}
In Theorem \ref{th5.1}, take $A=R$, $B=M\backslash R$ and $L\subseteq B$ (see (\ref{eq5.2})) . Also, note that $f(\lambda_{A|B})=0$ if $X_{R}\ci X_{v}|X_{(M\backslash R)\backslash\{v\}}$ for some $v\in M\backslash R$ so that from (\ref{eq5.2}), we have $\lambda_{L}^{M}(x_{L})=\lambda_{L}^{M\backslash R}(x_{L})$ for all $x_{L}\in\mathfrak{X}_{L}$. Hence, the result follows from Definition \ref{def2.1}.
\end{proof}

\end{document}